\newtheorem{theorem}{Theorem}
\newtheorem{lemma}[theorem]{Lemma}
\newtheorem{remark}{Remark}
\DeclareMathOperator*{\argmin}{argmin}
\DeclareMathOperator*{\prox}{prox}
\DeclareMathOperator*{\proj}{proj}
\DeclareMathOperator*{\Id}{Id}
\title{A fast continuous time approach for nonsmooth convex optimization with time scaling and Tikhonov regularization}
\author{Robert Ernö Csetnek
\footnote{Faculty of Mathematics, University of Vienna, Oskar-Morgenstern-Platz 1, 1090 Vienna, Austria, 
{email: \tt robert.csetnek@univie.ac.at}. This work was supported by a grant of the Ministry of Research (Romania), Innovation and Digitization, CNCS-UEFISCDI, project number PN-III-P1-1.1-TE-2021-0138, within PNCDI III} \and
Mikhail A. Karapetyants 
\footnote{Faculty of Mathematics, University of Vienna, Oskar-Morgenstern-Platz 1, 1090 Vienna, Austria, 
{email: \tt mikhail.karapetyants@univie.ac.at.} Research supported by the Doctoral Programme \emph{Vienna Graduate School on Computational Optimization (VGSCO)} which is funded by FWF (Austrian Science Fund), project W 1260.} 
}
\begin{document}

\maketitle

\begin{abstract}
    
In a Hilbert setting we aim to study a second order in time differential equation, combining viscous and Hessian-driven damping, containing a time scaling parameter function and a Tikhonov regularization term. The dynamical system is related to the problem of minimization of a nonsmooth convex function. In the formulation of the problem as well as in our analysis we use the Moreau envelope of the objective function and its gradient and heavily rely on their properties. We show that there is a setting where the newly introduced system preserves and even improves the well-known fast convergence properties of the function and Moreau envelope along the trajectories and also of the gradient of Moreau envelope due to the presence of time scaling. Moreover, in a different setting we prove strong convergence of the trajectories to the element of minimal norm from the set of all minimizers of the objective. The manuscript concludes with various numerical results. 

\smallskip
{\em Key words}: Nonsmooth convex optimization;  Damped inertial dynamics;  Hessian-driven damping; Time scaling; Moreau envelope; Proximal operator; Tikhonov regularization.

\smallskip
{\em AMS subject classification}: 37N40, 46N10, 49M99, 65K05, 65K10, 90C25.

\end{abstract} 

\begin{section}{Introduction}

In the Hilbert setting $H$, where $\langle \cdot, \cdot \rangle$ denotes the inner product and the norm is defined as usual $ \| \cdot \| = \sqrt{\langle \cdot, \cdot \rangle} $, we will study the convergence properties of the following second order in time differential equation
\begin{equation}\label{Syst}
    \ddot x(t) + \frac{\alpha}{t} \dot x(t) + \beta \frac{d}{dt} \nabla \Phi_{\lambda(t)}(x(t)) + b(t) \nabla \Phi_{\lambda(t)}(x(t)) + \varepsilon(t) x(t) = 0 \text{ for } t \geq t_0,
\end{equation}
with initial conditions $x(t_0) = x_0 \in H$, $\dot x(t_0) = \dot x_0 \in H$, where $ \alpha, \beta \text{ and } t_0 > 0 $, $\lambda: [t_0, +\infty) \mapsto \mathbb{R}_+$ and $b: [t_0, +\infty) \mapsto \mathbb{R}_+$ are non-negative, non-decreasing and differentiable, $ \Phi: H \mapsto \overline{\mathbb{R}} = \mathbb{R} \cup \{ \pm \infty \} $ is a proper, convex and lower semicontinuous function and $\Phi_\lambda$ is its Moreau envelope of the index $\lambda > 0$ and the function $\varepsilon: [t_0, +\infty) \mapsto \mathbb{R}_+$ is continuously differentiable and non-increasing with the property $\lim_{t \to +\infty} \varepsilon(t) = 0$. In addition, we assume that $\argmin \Phi$, which is the set of global minimizers of $\Phi$, is not empty and denote by $\Phi^*$ the optimal objective value of $\Phi$. The system \eqref{Syst} has a connection to the minimization problem
\[
\min_{x \in H} \Phi(x)
\]
of a proper, convex and lower semicontinuous function $\Phi$. Studying such systems provides better understanding of their discrete counterpart -- optimization algorithms, since there is a strong connection between them, and the question of transitioning from one to another attracts a lot of attention in the modern literature.

One of the main goals of this research is to improve (compared to \cite{BCL}) the fast rates of convergence for the Moreau envelope of the objective function and the objective function itself to $\Phi^*$, as well as for the gradient of the Moreau envelope of the objective function in terms of the Moreau parameter function $\lambda$ and the time scaling function $b$. Moreover, we also deduce the strong convergence of the trajectory of the dynamics to the minimal norm element of $\argmin \Phi$. We introduce two settings with different assumptions for each result. To conclude we provide multiple numerical results in order to illustrate our theoretical discoveries.

\subsection{Nonsmooth optimization with time scaling}

In the smooth setting the pioneering research in studying second order dynamical systems was conducted by Su-Boyd-Candes \cite{SBC} for the sake of obtaining faster asymptotic convergence for convex functions. They managed to deduce the rates of convergence of the function values being of the order $\frac{1}{t^2}$. Later Attouch-Peypouquet-Redont \cite{APR_0} also established the weak (and in some particular cases the strong) convergence of the trajectories to a minimizer of the objective function. In \cite{APR} the same authors continued the development in this direction by adding Hessian-driven damping term in order to obtain the rates for the gradient of the objective function and to eliminate any possible oscillations in the dynamical behaviour of the trajectories.

Concerning the nonsmooth setting we must point out that the Moreau envelope of a proper, convex and lower semicontinuous function $\Phi : H \to \overline{\mathbb{R}}$ proved to be of a significant importance in designing continuous-time approaches and numerical algorithms for the minimization of nonsmooth functions. The rigorous definition of this construction is
\begin{equation*}
\Phi_\lambda: H \to \mathbb{R}, \quad \Phi_{\lambda} (x) \ = \ \inf_{y \in H} \left\{ \Phi(y) + \frac{1}{2 \lambda} \| x - y \|^2 \right\},
\end{equation*}
where $\lambda > 0$ is the parameter of the Moreau envelope (see, for instance, \cite{BC}). One of the most important properties of Moreau approximation is that for every $\lambda > 0$,  the functions $\Phi$ and $\Phi_{\lambda}$ share the same optimal objective value and also the same set of minimizers. Moreover, $\Phi_\lambda$ is convex and continuously differentiable with
\begin{align}\label{Morprox}
\nabla \Phi_{\lambda} (x) = \frac{1}{\lambda} ( x - \prox\nolimits_{\lambda \Phi} (x)) \quad \forall x \in H,
\end{align}
and $\nabla \Phi_\lambda$ is $\frac{1}{\lambda}$-Lipschitz continuous, where
\[
\prox\nolimits_{\lambda \Phi}: H \to H, \quad \prox\nolimits_{\lambda \Phi} (x) = \argmin_{y \in H} \left\{ \Phi(y) + \frac{1}{2 \lambda} \| x - y \|^2 \right\},
\]
denotes the proximal operator of $\Phi$ of parameter $\lambda$. The last fact we would like to mention is that for every $x \in H$, the function $\lambda \in (0, +\infty) \to \Phi_\lambda (x)$ is nonincreasing  and differentiable (see \cite{AC}, Lemma A$1$), namely, 
\begin{equation}\label{Moreau_lambda_deriv}
    \frac{d}{d \lambda} \Phi_{\lambda} (x) = -\frac{1}{2} \| \nabla \Phi_{\lambda} (x) \|^2 \quad \forall \lambda > 0.
\end{equation}

Our research is a logical continuation of the one conducted in \cite{BK}, where authors applied the time rescaling technique to a nonsmooth optimization problem (for more information on time scaling see also \cite{ABCR_1, ACFR, ACR_0, ACR_2}). They considered the following system
\begin{equation}\label{Syst_old}
    \ddot x(t) + \frac{\alpha}{t} \dot x(t) + \beta(t) \frac{d}{dt} \nabla \Phi_{\lambda(t)}(x(t)) + b(t) \nabla \Phi_{\lambda(t)}(x(t)) = 0,
\end{equation}
where  $\alpha \geq 1 $, $t_0 > 0$, and $\beta: [t_0, +\infty) \mapsto [0, +\infty)$ and $ b,  \lambda: [t_0, +\infty) \mapsto (0, +\infty)  $ are differentiable functions. On the one hand, the presence of the Hessian damping term is believed to help reducing the oscillations in the dynamical behaviour and provides the rates for the gradient of the objective function $\Phi$. On the other hand, the time-scaling technique (which is considered to be an artificial way to speed up the convergence of values) affects the convergence rates while bringing more restrictions to the analysis. The following properties were established
\[
\Phi_{\lambda(t)}(x(t)) - \Phi^* = o\left( \frac{1}{t^2 b(t)} \right) \text{ and } \| \dot x(t) \| = o\left( \frac{1}{t} \right) \text{ as } t \to +\infty,
\]
from where through proximal mapping the convergence rates for the objective function $\Phi$ itself along the trajectory were obtained
\[
\Phi \big( \prox\nolimits_{\lambda(t) \Phi} (x(t)) \big) - \Phi^* = o\left( \frac{1}{t^2 b(t)} \right) \text{ and } \| \prox\nolimits_{\lambda(t) \Phi} (x(t)) - x(t) \| = o\left( \frac{\sqrt{\lambda(t)}}{t \sqrt{b(t)}} \right) \text{ as } t \to +\infty.
\]
Note that by taking $b(\cdot) \equiv 1$ we arrive at the well-known convergence rate of the values being of the order $o\left( \frac{1}{t^2} \right)$. In addition, the following rates for the gradient of the Moreau envelope were deduced
\[
\| \nabla \Phi_{\lambda(t)} (x(t)) \| \ = \ o \left( \frac{1}{t \sqrt{b(t) \lambda(t)}} \right), \text{ as } t \to +\infty .
\]
Finally, the weak convergence of the trajectories $x(t)$ to a minimizer of $\Phi$ as $t \to +\infty$ was obtained.

In our analysis we borrow some ideas of \cite{BK} and develop them further in order to fit the new setting, namely, to adapt to a presence of the whole new term -- Tikhonov regularization. The analysis becomes more involved and technical, some fundamental properties of Tikhonov regularization had to be proved for a nonsmooth setting. Its presence affects the set of conditions, which we have to impose on the system parameters: even though some of the conditions are formulated in the same spirit as in \cite{BK} (for instance, \eqref{A_6_0} and \eqref{tbt}), the other ones are completely new due to the presence of the Tikhonov term. Moreover, depending on how fast $\varepsilon$ decays, two different setting arise providing different fundamental results (Sections $3$ and $4$).

\subsection{Tikhonov regularization}

It turned out that having additional term with specific properties in a system equation leads to improving the weak convergence of the trajectories to a minimizer of the objective function $\Phi$ to a strong one to the element of minimal norm of $\argmin \Phi$. Such systems were studied, for instance, in \cite{ABCR, ABCR_0, ACR, ACR_1, AL_0, BCL, L}. The main goal of such a research is to show that these systems preserve all the typical properties of the second order in time dynamical system (fast convergence of the values, the rates for the gradient etc.) but moreover there is an improvement to the strong convergence of the trajectories to the minimal norm solution instead of a weak one to an arbitrary minimizer. One of the many examples of such systems is presented below (see  \cite{BCL})
\begin{equation*}
    \ddot x(t) + \frac{\alpha}{t} \dot x(t) + \beta \nabla^2 \Phi(x(t)) \dot x(t) + \nabla \Phi(x(t)) + \varepsilon(t) x(t) = 0 \text{ for } t \geq t_0,
\end{equation*}
where $ \alpha \geq 3$, $t_0 > 0$, $\Phi: H \mapsto \mathbb{R}$ is twice continuously differentiable and convex and for the rest of the section the function $\varepsilon: [t_0, +\infty) \mapsto \mathbb{R}_+$ is continuously differentiable and non-increasing with the property $\lim_{t \to +\infty} \varepsilon(t) = 0$. In that manuscript they provided two settings: one for the fast convergence of values obtaining
\[
\Phi(x(t)) - \Phi^* \ = \ o\left( \frac{1}{t^2} \right), \text{ as } t \to +\infty
\]
and the weak convergence of the trajectories to a minimizer of $\Phi$ and another setting for the strong convergence of $x$ to $x^*$, as $t \to +\infty$.

Another fine example is given in \cite{ABCR}:
\begin{equation}\label{syst_tikh}
    \ddot x(t) + \alpha \sqrt{\varepsilon(t)} \dot x(t) + \nabla \Phi(x(t)) + \varepsilon(t) x(t) = 0 \text{ for } t \geq t_0,
\end{equation}
where $ \alpha $, $ t_0 > 0 $ and $\Phi: H \mapsto \mathbb{R}$ is continuously differentiable and convex. In that paper authors obtained the rates for the function values $ \Phi(x(t)) - \Phi^* $, as well as for the quantity $ \| x(t) - x_{\varepsilon(t)} \| $, as $t \to +\infty$, where $ x_{\varepsilon(t)} = \argmin_H \left( \Phi(x) + \frac{\varepsilon(t) \| x \|^2}{2} \right) $. Thus, they assured the strong convergence of the trajectories to the minimal norm solution $ x^* = \proj_{\argmin \Phi}(0) $ under the appropriate assumptions and properly chosen energy functional, using the properties of Tikhonov regularization. The most important thing about this approach is that authors were able to establish fast convergence of values and strong convergence of the trajectories in the very same setting.

The next step was done in \cite{ABCR_0}:
\begin{equation*}
    \ddot x(t) + \alpha \sqrt{\varepsilon(t)} \dot x(t) + \beta \frac{d}{dt} \Big( \nabla \varphi_t (x(t)) + (p - 1) \varepsilon(t) x(t) \Big) + \nabla \varphi_t (x(t)) = 0 \text{ for } t \geq t_0,
\end{equation*}
where $\varphi_t (x) = \Phi(x) + \frac{\varepsilon(t) \| x \|^2}{2}$, $ \Phi: H \mapsto \mathbb{R}$ is twice continuously differentiable and convex and $p \in [0, 1]$. This system while preserving all the properties of \eqref{syst_tikh}, additionally provides the integral estimate for the norm of the gradient of $\varphi_t$.

\subsection{Our contribution}

In that paper we will develop the ideas presented in \cite{BCL} to cover the nonsmooth case with time scaling. We will obtain the fast convergence of the function values (as well as for the gradient of the Moreau envelope of the objective fucntion $\Phi$) for the family of dynamical systems \eqref{Syst} governed by the Moreau envelope of the nonsmooth function $\Phi$ and having the Tiknonov term in their formulation:
\[
\Phi_{\lambda(t)} (x(t)) - \Phi^* \ = \ o \left( \frac{1}{t^2 b(t)} \right) \text{ as } t \to +\infty;
\]
 in terms of the function itself:
\[
\Phi(\prox\nolimits_{\lambda(t) \Phi}(x(t))) - \Phi^* \ = \ o \left( \frac{1}{t^2 b(t)} \right) \text{ as } t \to +\infty,
\]
where
\[
\| \prox\nolimits_{\lambda(t) \Phi} (x(t)) - x(t) \| \ = \ o \left( \frac{\sqrt{\lambda(t)}}{t \sqrt{b(t)}} \right) \text{ as } t \to +\infty
\]
and finally
\[
\| \nabla \Phi_{\lambda(t)} (x(t)) \| \ = \ o \left( \frac{1}{t \sqrt{b(t) \lambda(t)}} \right) \text{ as } t \to +\infty.
\]
We will also deduce (under some appropriate conditions) the following result
\[
\liminf_{t \to +\infty} \| x(t) - x^* \| = 0,
\]
which under some restrictions will be improved to the full strong convergence of the trajectories of \eqref{Syst} to the minimal norm solution.

The paper is organized in the following way. Section $2$ is devoted to some preliminary results, which we will need later. We will establish the fast rates of convergence of function values and its Moreau envelope, as well as the gradient of Moreau envelope along the trajectories of the dynamical system (Section $3$). We will show that under some assumptions the strong convergence of the trajectories to the element of minimal norm from the set of all minimizers of the objective function takes place (Section $4$). We will provide two settings for the polynomial choice of parameter functions to fulfill the assumptions made through the analysis (Section $5$) and equip this manuscript with various numerical results (Section $6$).

\end{section}

\begin{section}{Preparatory results}

We start with the following lemma (see \cite{BC}, Proposition 12.22, for the first term of the lemma and \cite{AP}, Appendix, A$1$, for the second one).
\begin{lemma}\label{L_0}

Let $\Phi: H \mapsto \overline{\mathbb{R}}$ be a proper, convex and lower semicontinuous function, $\lambda, \mu > 0$. Then
\begin{enumerate}

    \item\label{i} $(\Phi_\lambda)_\mu = \Phi_{\lambda + \mu}$.
    
    \item\label{ii} $ \prox_{\mu \Phi_\lambda} = \frac{\lambda}{\lambda + \mu} \Id + \frac{\mu}{\lambda + \mu} \prox_{(\lambda + \mu)\Phi} $.
    
\end{enumerate}

\end{lemma}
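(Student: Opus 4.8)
The plan is to establish part \ref{i} first by a direct double‑minimization computation, and then obtain part \ref{ii} as a short consequence of part \ref{i} together with the gradient formula \eqref{Morprox}.

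\textbf{Part \ref{i}.} Write out both Moreau envelopes: for fixed $x\in H$,
\[
(\Phi_\lambda)_\mu(x)=\inf_{z\in H}\Bigl\{\Phi_\lambda(z)+\tfrac{1}{2\mu}\|x-z\|^2\Bigr\}
=\inf_{z\in H}\inf_{y\in H}\Bigl\{\Phi(y)+\tfrac{1}{2\lambda}\|z-y\|^2+\tfrac{1}{2\mu}\|x-z\|^2\Bigr\}
=\inf_{(y,z)\in H\times H}\Bigl\{\Phi(y)+\tfrac{1}{2\lambda}\|z-y\|^2+\tfrac{1}{2\mu}\|x-z\|^2\Bigr\},
\]
the last equality being the (unconditional) identity $\inf_z\inf_y=\inf_{(y,z)}$. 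Now freeze $y$ and minimize over $z$: the map $z\mapsto\tfrac{1}{2\lambda}\|z-y\|^2+\tfrac{1}{2\mu}\|x-z\|^2$ is strongly convex, its unique minimizer is the barycenter $z^\ast=\tfrac{\lambda x+\mu y}{\lambda+\mu}$ (set the derivative $\tfrac1\lambda(z-y)-\tfrac1\mu(x-z)$ to zero), and substituting back—using $z^\ast-y=\tfrac{\lambda}{\lambda+\mu}(x-y)$ and $x-z^\ast=\tfrac{\mu}{\lambda+\mu}(x-y)$—gives the optimal value $\tfrac{1}{2(\lambda+\mu)}\|x-y\|^2$. Hence $(\Phi_\lambda)_\mu(x)=\inf_{y\in H}\{\Phi(y)+\tfrac{1}{2(\lambda+\mu)}\|x-y\|^2\}=\Phi_{\lambda+\mu}(x)$. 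The only points requiring a word of care are that $\Phi_\lambda$ is finite‑valued (properness and lower semicontinuity of $\Phi$ plus coercivity of the quadratic guarantee the inner infimum is attained and finite) and the completion of the square in the Hilbert norm; neither is a genuine obstacle.

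\textbf{Part \ref{ii}.} Since $\Phi_\lambda$ is convex and continuously differentiable, so is $(\Phi_\lambda)_\mu$, and by part \ref{i} it coincides with $\Phi_{\lambda+\mu}$; equal $C^1$ functions have equal gradients, so $\nabla(\Phi_\lambda)_\mu=\nabla\Phi_{\lambda+\mu}$. Applying \eqref{Morprox} to each side gives, for every $x\in H$,
\[
\tfrac{1}{\mu}\bigl(x-\prox_{\mu\Phi_\lambda}(x)\bigr)=\nabla(\Phi_\lambda)_\mu(x)=\nabla\Phi_{\lambda+\mu}(x)=\tfrac{1}{\lambda+\mu}\bigl(x-\prox_{(\lambda+\mu)\Phi}(x)\bigr).
\]
Solving for $\prox_{\mu\Phi_\lambda}(x)$ yields $\prox_{\mu\Phi_\lambda}(x)=x-\tfrac{\mu}{\lambda+\mu}\bigl(x-\prox_{(\lambda+\mu)\Phi}(x)\bigr)=\tfrac{\lambda}{\lambda+\mu}x+\tfrac{\mu}{\lambda+\mu}\prox_{(\lambda+\mu)\Phi}(x)$, which is exactly the asserted identity.

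\textbf{Main obstacle.} There is no deep difficulty here; the proof is essentially a quadratic optimization (part \ref{i}) followed by differentiation of an identity (part \ref{ii}). If one prefers a proof of part \ref{ii} that does not invoke part \ref{i}, the alternative is to use the optimality condition $p=\prox_{\mu\Phi_\lambda}(x)\iff x-p=\mu\nabla\Phi_\lambda(p)$ and to verify directly that the point $p:=\tfrac{\lambda}{\lambda+\mu}x+\tfrac{\mu}{\lambda+\mu}\prox_{(\lambda+\mu)\Phi}(x)$ satisfies it; this requires knowing that $\prox_{(\lambda+\mu)\Phi}(x)=\prox_{\lambda\Phi}(p)$, which is again where a small computation (or part \ref{i}) is needed, so the route through part \ref{i} is the cleanest.
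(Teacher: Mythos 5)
Your proof is correct. The paper does not actually prove this lemma — it simply cites Proposition 12.22 of Bauschke--Combettes for the first identity and the appendix of Attouch--Peypouquet for the second — and your argument (double minimization with an explicit quadratic minimization in $z$ for the semigroup property, then differentiating the resulting identity and invoking \eqref{Morprox} on both sides to extract the prox formula) is precisely the standard derivation found in those references, with all the algebra carried out correctly.
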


Let us mention two key properties of the Tikhonov regularization, which we will use later in the analysis (see, for instance, \cite{A} or \cite{BC} Theorem 23.44 for its classic analogue). First let us introduce the strongly convex function $\varphi_{\varepsilon(t), \lambda(t)}: H \mapsto \mathbb{R}$ as $\varphi_{\varepsilon(t), \lambda(t)} (x) = \Phi_{\lambda(t)}(x) + \frac{\varepsilon(t) \| x \|^2}{2}$ and denote the unique minimizer of $\varphi_{\varepsilon(t), \lambda(t)}$ as $x_{\varepsilon(t), \lambda(t)} = \argmin_{H} \varphi_{\varepsilon(t), \lambda(t)}$. Thus, the first order optimality condition reads as
\begin{equation}\label{FOOC}
    \nabla \Phi_{\lambda(t)} (x_{\varepsilon(t), \lambda(t)}) + \varepsilon(t) x_{\varepsilon(t), \lambda(t)} = 0.
\end{equation}
Now we are ready to formulate the following result:

\begin{lemma}

Suppose that 
\begin{equation}\label{A_0}
    \lim_{t \to +\infty} \lambda(t) \varepsilon(t) \ = \ 0.
\end{equation}
Then the following properties of the mapping $t \mapsto x_{\varepsilon(t), \lambda(t)}$ are satisfied:
\begin{equation}\label{T_0}
    \text{ for } x^* = \proj_{\argmin \Phi}(0), \ \| x_{\varepsilon(t), \lambda(t)} \| \leq \| x^* \| \text{ for all } t \geq t_0
\end{equation}
and
\begin{equation}\label{T_1}
    \lim_{t \to +\infty} \| x_{\varepsilon(t), \lambda(t)} - x^* \| = 0.
\end{equation}

\end{lemma}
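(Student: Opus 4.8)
The plan is to treat the two claims separately: \eqref{T_0} will follow from a one-line comparison of the values of $\varphi_{\varepsilon(t),\lambda(t)}$ at $x_{\varepsilon(t),\lambda(t)}$ and at $x^*$, while \eqref{T_1} will be obtained by a standard weak-compactness argument in $H$ combined with the ``weak convergence plus norm convergence implies strong convergence'' principle. Throughout I would exploit three facts recalled in the Introduction: $\Phi$ and $\Phi_{\lambda(t)}$ share the minimal value $\Phi^*$ and the minimizer set, so $\Phi_{\lambda(t)}(x^*)=\Phi^*$ and $\Phi_{\lambda(t)}\ge\Phi^*$ on all of $H$ (using $\argmin\Phi\neq\emptyset$); the identity \eqref{Morprox} linking $\nabla\Phi_{\lambda(t)}$ to the proximal operator; and the elementary inequality $\Phi\big(\prox\nolimits_{\lambda(t)\Phi}(x)\big)\le\Phi_{\lambda(t)}(x)$ for every $x\in H$, immediate from the definition of the Moreau envelope.

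For \eqref{T_0}, I would use that $x_{\varepsilon(t),\lambda(t)}$ minimizes $\varphi_{\varepsilon(t),\lambda(t)}$, so $\Phi_{\lambda(t)}(x_{\varepsilon(t),\lambda(t)})+\tfrac{\varepsilon(t)}{2}\|x_{\varepsilon(t),\lambda(t)}\|^2\le\Phi_{\lambda(t)}(x^*)+\tfrac{\varepsilon(t)}{2}\|x^*\|^2=\Phi^*+\tfrac{\varepsilon(t)}{2}\|x^*\|^2$; since $\Phi_{\lambda(t)}(x_{\varepsilon(t),\lambda(t)})\ge\Phi^*$, dropping that term and dividing by $\varepsilon(t)/2>0$ gives $\|x_{\varepsilon(t),\lambda(t)}\|\le\|x^*\|$ (this step does not even need \eqref{A_0}). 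As a by-product of the same inequality, $\Phi^*\le\Phi_{\lambda(t)}(x_{\varepsilon(t),\lambda(t)})\le\Phi^*+\tfrac{\varepsilon(t)}{2}\|x^*\|^2$, hence $\Phi_{\lambda(t)}(x_{\varepsilon(t),\lambda(t)})\to\Phi^*$ as $t\to+\infty$. Writing $p(t):=\prox\nolimits_{\lambda(t)\Phi}(x_{\varepsilon(t),\lambda(t)})$ and combining the optimality condition \eqref{FOOC} with \eqref{Morprox} and \eqref{T_0}, I obtain $\|x_{\varepsilon(t),\lambda(t)}-p(t)\|=\lambda(t)\,\|\nabla\Phi_{\lambda(t)}(x_{\varepsilon(t),\lambda(t)})\|=\lambda(t)\varepsilon(t)\,\|x_{\varepsilon(t),\lambda(t)}\|\le\lambda(t)\varepsilon(t)\,\|x^*\|$, which tends to $0$ precisely because of \eqref{A_0}.

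For \eqref{T_1}, I would argue by subsequences. Fix any sequence $t_n\to+\infty$; by \eqref{T_0} the family $\{x_{\varepsilon(t_n),\lambda(t_n)}\}$ is bounded, so a subsequence converges weakly to some $\bar x$, and by the previous estimate $p(t_n)\rightharpoonup\bar x$ along the same subsequence. Weak lower semicontinuity of $\Phi$ together with $\Phi(p(t_n))\le\Phi_{\lambda(t_n)}(x_{\varepsilon(t_n),\lambda(t_n)})\to\Phi^*$ yields $\Phi(\bar x)\le\Phi^*$, i.e. $\bar x\in\argmin\Phi$; weak lower semicontinuity of the norm and \eqref{T_0} give $\|\bar x\|\le\|x^*\|$, and since $x^*=\proj_{\argmin\Phi}(0)$ is the unique minimal-norm point of the closed convex set $\argmin\Phi$, necessarily $\bar x=x^*$. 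Thus every weak subsequential limit equals $x^*$, so $x_{\varepsilon(t),\lambda(t)}\rightharpoonup x^*$ as $t\to+\infty$; finally $\limsup_{t\to+\infty}\|x_{\varepsilon(t),\lambda(t)}\|\le\|x^*\|\le\liminf_{t\to+\infty}\|x_{\varepsilon(t),\lambda(t)}\|$ forces $\|x_{\varepsilon(t),\lambda(t)}\|\to\|x^*\|$, and weak convergence plus norm convergence gives strong convergence in $H$, which is \eqref{T_1}. The only genuinely non-routine point is the detour through the proximal points $p(t)$: the inequality $\Phi_{\lambda(t)}\le\Phi$ goes the wrong way, so one cannot pass from $\Phi_{\lambda(t)}(x_{\varepsilon(t),\lambda(t)})\to\Phi^*$ directly to a statement about $\Phi$, and it is exactly assumption \eqref{A_0} that makes $p(t)$ close enough to $x_{\varepsilon(t),\lambda(t)}$ for the weak-limit identification to go through; everything else (boundedness, weak compactness, the weak-to-strong upgrade) is standard.
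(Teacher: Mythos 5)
Your proof is correct, but it follows a genuinely different route from the paper's, in both halves. For \eqref{T_0} the paper argues via monotonicity of $\nabla\Phi_{\lambda(t)}$: since $\nabla\Phi_{\lambda(t)}(x^*)=0$ and, by \eqref{FOOC}, $\nabla\Phi_{\lambda(t)}(x_{\varepsilon(t),\lambda(t)})=-\varepsilon(t)x_{\varepsilon(t),\lambda(t)}$, the inequality $\langle -\varepsilon(t)x_{\varepsilon(t),\lambda(t)},x_{\varepsilon(t),\lambda(t)}-x^*\rangle\geq 0$ plus Cauchy--Schwarz gives the bound; your value comparison $\varphi_{\varepsilon(t),\lambda(t)}(x_{\varepsilon(t),\lambda(t)})\le\varphi_{\varepsilon(t),\lambda(t)}(x^*)$ is an equally short variational version of the same fact (and, as you note, it has the useful by-product $\Phi_{\lambda(t)}(x_{\varepsilon(t),\lambda(t)})\to\Phi^*$). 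For \eqref{T_1} the divergence is more substantial: the paper rewrites \eqref{FOOC} as $x_{\varepsilon(t),\lambda(t)}=\prox_{\frac{1}{\varepsilon(t)}\Phi_{\lambda(t)}}(0)=\big(\lambda(t)\varepsilon(t)+1\big)^{-1}\prox_{(\lambda(t)+1/\varepsilon(t))\Phi}(0)$ via item \ref{ii} of Lemma \ref{L_0}, and then invokes the classical result (Theorem 23.44 of \cite{BC}) that $\prox_{\mu\Phi}(0)\to\proj_{\argmin\Phi}(0)$ as $\mu\to+\infty$, with \eqref{A_0} only needed to send the prefactor to $1$. You instead give a self-contained weak-compactness argument: identify every weak cluster point as $x^*$ through the detour via $p(t)=\prox_{\lambda(t)\Phi}(x_{\varepsilon(t),\lambda(t)})$ (where \eqref{A_0} enters, correctly, through $\|x_{\varepsilon(t),\lambda(t)}-p(t)\|=\lambda(t)\varepsilon(t)\|x_{\varepsilon(t),\lambda(t)}\|$), then upgrade weak to strong convergence using \eqref{T_0} and weak lower semicontinuity of the norm. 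What the paper's route buys is brevity by outsourcing the limit to a known theorem; what yours buys is transparency and independence from the prox-calculus identity of Lemma \ref{L_0}, essentially reproving the Bauschke--Combettes result in this parametrized setting. Your observation that the detour through $p(t)$ is forced because $\Phi_{\lambda(t)}\le\Phi$ points the wrong way is exactly the right diagnosis of where \eqref{A_0} is used.
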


\begin{proof}

By the monotonicity of $\nabla \Phi_\lambda$ we deduce
\[
\left\langle \nabla \Phi_{\lambda(t)}(x_{\varepsilon(t), \lambda(t)}) - \nabla \Phi_{\lambda(t)}(x^*), x_{\varepsilon(t), \lambda(t)} - x^* \right\rangle \ \geq \ 0.
\]
By \eqref{FOOC} we obtain
\[
\left\langle - \varepsilon(t) x_{\varepsilon(t), \lambda(t)}, x_{\varepsilon(t), \lambda(t)} - x^* \right\rangle \ = \ \varepsilon(t) \left( - \| x_{\varepsilon(t), \lambda(t)} \|^2 + \left\langle x_{\varepsilon(t), \lambda(t)}, x^* \right\rangle \right) \ \geq \ 0.
\]
Using Cauchy-Schwarz inequality we derive
\[
\| x_{\varepsilon(t), \lambda(t)} \| \ \leq \ \| x^* \|.
\]
This proves the first claim. For the second one consider \eqref{FOOC} again and note that it is equivalent to 
\[
x_{\varepsilon(t), \lambda(t)} = \prox\nolimits_{\frac{1}{\varepsilon(t)}\Phi_{\lambda(t)}} (0) = \frac{\prox\nolimits_{\left( \lambda(t) + \frac{1}{\varepsilon(t)} \right)\Phi} (0)}{\lambda(t) \varepsilon(t) + 1}
\]
by the item \ref{ii} of Lemma \ref{L_0}. Note that $\lambda(t) + \frac{1}{\varepsilon(t)} \to +\infty$, as $t \to +\infty$. Thus, the rest of the proof goes in line with Theorem 23.44 of \cite{BC}.

\end{proof}

Our nearest goal is to deduce the existence and uniqueness of the solutions of the dynamical system \eqref{Syst}. Suppose $\beta > 0$. Let us integrate \eqref{Syst} from $t_0$ to $t$ to obtain
\[
    \dot x(t) + \beta \nabla \Phi_{\lambda(t)} (x(t)) + \int_{t_0}^t \left( \frac{\alpha}{s} \dot x(s) + b(s) \nabla \Phi_{\lambda(s)} (x(s)) + \varepsilon(s) x(s) \right) ds - \left( \dot x(t_0) + \beta \nabla \Phi_{\lambda(t_0)} (x(t_0)) \right) \ = \ 0.
\]
Denoting $z(t) := \int_{t_0}^t \left( \frac{\alpha}{s}  \dot x(s) + b(s) \nabla \Phi_{\lambda(s)} (x(s)) + \varepsilon(s) x(s) \right) ds - \big( \dot x(t_0) + \beta \nabla \Phi_{\lambda(t_0)} (x_0)) \big)$ for every $t \geq t_0$ and noticing that $\dot z(t) = \frac{\alpha}{t}\dot x(t) +b(t) \nabla \Phi_{\lambda(t)} (x(t)) + \varepsilon(t) x(t)$ we deduce, that \eqref{Syst} is equivalent  to
\begin{equation*}
\begin{cases}
    &\dot x(t) + \beta \nabla \Phi_{\lambda(t)} (x(t)) + z(t) = 0, \\
    &\dot z(t) - \frac{\alpha}{t} \dot x(t) - b(t) \nabla \Phi_{\lambda(t)}(x(t)) - \varepsilon(t) x(t) = 0, \\
    &x(t_0) = x_0, \ z(t_0) = -\left(\dot x(t_0) + \beta \nabla \Phi_{\lambda(t_0)} (x_0) \right).
\end{cases}
\end{equation*}
Let us multiply the first line by the function $b$ and the second one by the constant $\beta$ and then sum them up to get rid of the gradient of the Moreau envelope in the second equation
\begin{equation*}
\begin{cases}
    &\dot x(t) + \beta \nabla \Phi_{\lambda(t)} (x(t)) + z(t) = 0, \\
    &\beta \dot z(t) + \left( b(t) - \frac{\alpha \beta}{t} \right) \dot x(t) - \beta \varepsilon(t) x(t) + b(t) z(t) = 0,\\
    &x(t_0) = x_0, \ z(t_0) = -\left(\dot x(t_0) + \beta \nabla \Phi_{\lambda(t_0)} (x_0) \right).
\end{cases}
\end{equation*}
We denote now $y(t) = \beta z(t) + \left( b(t) - \frac{\alpha \beta}{t} \right) x(t)$, and, after simplification, we obtain the following equivalent formulation for the dynamical system 
\begin{equation*}
\begin{cases}
    &\dot x(t) + \beta \nabla \Phi_{\lambda(t)} (x(t)) + \left( \frac{\alpha}{t} - \frac{b(t)}{\beta} \right) x(t) + \frac{1}{\beta} y(t) = 0, \\
    &\dot y(t) - \left( \dot b(t) + \frac{\alpha \beta}{t^2} + \beta \varepsilon(t) + \frac{b^2(t)}{\beta} - \frac{\alpha b(t)}{t} \right) x(t) + \frac{b(t)}{\beta} y(t) = 0, \\
    &x(t_0) = x_0, \ y(t_0) = -\beta \left( \dot x(t_0) + \beta \nabla \Phi_{\lambda(t_0)} (x_0) \right) + \left( b(t_0) - \frac{\alpha \beta}{t_0} \right) x_0.
\end{cases}
\end{equation*}
In case $\beta = 0$ for every $t \geq t_0$, \eqref{Syst}  can be equivalently written as
\begin{equation*}
\begin{cases}
&\dot x(t)  - y(t) = 0, \\
&\dot y(t) + \frac{\alpha}{t} y(t) + b(t) \nabla \Phi_{\lambda(t)}(x(t)) + \varepsilon(t) x(t) = 0, \\
&x(t_0) = x_0, \ y(t_0) = \dot x(t_0).
\end{cases}
\end{equation*}
Based on the two reformulations of the dynamical system \eqref{Syst} we formulate the following existence and uniqueness result, which is a consequence of Cauchy-Lipschitz theorem for strong global solutions. The result can be proved in the lines of the proofs of Theorem $1$ in \cite{AL} or of Theorem $1.1$ in \cite{APR} with some small adjustments.

\begin{theorem}

Suppose that there exists $\lambda_0 > 0$ such that $\lambda(t) \geq \lambda_0$ for all $t \geq t_0$. Then for every $(x_0, \dot x(t_0)) \in H \cdot H $ there exists a unique strong global solution $x: [t_0, +\infty) \mapsto H$ of the continuous dynamics \eqref{Syst} which satisfies the Cauchy initial conditions $x(t_0) = x_0$ and $\dot x(t_0) = \dot x_0$.

\end{theorem}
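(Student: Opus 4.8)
The plan is to use the equivalent first-order reformulations of \eqref{Syst} obtained above and to invoke the Cauchy--Lipschitz (Picard--Lindel\"of) theorem for Carath\'eodory systems, producing first a maximal local solution and then extending it to $[t_0,+\infty)$ by a Gr\"onwall argument. In both cases $\beta>0$ and $\beta=0$ the reformulated system has the form $\dot u(t)=F(t,u(t))$, $u(t_0)=u_0$, with $u=(x,y)\in H\times H$, where $F$ is assembled from the coefficient functions $\tfrac{\alpha}{t}$, $b$, $\dot b$, $\varepsilon$ (all continuous on $[t_0,+\infty)$, since $b$ and $\varepsilon$ are differentiable and $t$ stays away from $0$) and from the map $(t,x)\mapsto\nabla\Phi_{\lambda(t)}(x)$. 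Crucially, because of the preliminary integration step, no time derivative of $\nabla\Phi_{\lambda(t)}$ occurs in $F$, only point evaluations of it.

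First I would verify that $F$ satisfies the Carath\'eodory hypotheses on every strip $[t_0,T]\times H$. Continuity of $t\mapsto F(t,u)$ reduces to continuity of $t\mapsto\nabla\Phi_{\lambda(t)}(x)$, which follows from continuity of $\lambda(\cdot)$ and the classical continuity of $\mu\mapsto\prox_{\mu\Phi}(x)$ (alternatively one may write $\nabla\Phi_{\lambda(t)}(x)=\nabla(\Phi_{\lambda_0})_{\lambda(t)-\lambda_0}(x)$ via item~\ref{i} of Lemma~\ref{L_0} and exploit the $C^{1,1}$-regularity of $\Phi_{\lambda_0}$). The uniform-in-$t$ Lipschitz continuity of $u\mapsto F(t,u)$ on $[t_0,T]$ is exactly where the hypothesis $\lambda(t)\ge\lambda_0$ is used: since $\nabla\Phi_{\lambda(t)}$ is $\tfrac{1}{\lambda(t)}$-Lipschitz and $\tfrac{1}{\lambda(t)}\le\tfrac{1}{\lambda_0}$, the nonlinear part of $F$ has a uniform Lipschitz constant, while the rest of $F$ is linear in $u$ with coefficients bounded on $[t_0,T]$. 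Local boundedness of $t\mapsto\|F(t,u_0)\|$ follows similarly, using $\|\nabla\Phi_{\lambda(t)}(x_0)\|\le\tfrac{1}{\lambda_0}\|x_0-\prox_{\lambda(t)\Phi}(x_0)\|$ together with the continuity (hence boundedness on $[t_0,T]$) of $t\mapsto\prox_{\lambda(t)\Phi}(x_0)$. Cauchy--Lipschitz then gives a unique maximal strong solution on some interval $[t_0,T_{\max})$.

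It remains to show $T_{\max}=+\infty$. Suppose $T_{\max}<+\infty$; on $[t_0,T_{\max})$ all coefficient functions of $F$ are bounded and $t\mapsto\prox_{\lambda(t)\Phi}(0)$ is bounded, so the reformulated equations yield a constant $C>0$ with $\|\dot u(t)\|\le C\bigl(1+\|u(t)\|\bigr)$ for a.e.\ $t\in[t_0,T_{\max})$. Gr\"onwall's inequality then bounds $\|u(t)\|$ on the whole interval $[t_0,T_{\max})$, contradicting the blow-up alternative for maximal solutions; hence the solution is global. Unwinding the change of variables (for $\beta>0$ one recovers $z$, then $\dot x$, from $x$ and $y$; for $\beta=0$ one has $y=\dot x$ directly) shows that the resulting $x$ is a strong global solution of \eqref{Syst} with $x(t_0)=x_0$ and $\dot x(t_0)=\dot x_0$, and uniqueness is inherited from the first-order system.

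I expect the main obstacle to be purely technical: checking that the change of variables is a genuine bijection between strong solutions of \eqref{Syst} and of the first-order system, so that existence, regularity and uniqueness transfer in both directions, and handling the $t$-dependence of $\lambda$ when verifying the measurability/continuity and the uniform Lipschitz bound for $F$ --- this is precisely the ``small adjustment'' relative to \cite{AL,APR}, where the Moreau parameter is constant.
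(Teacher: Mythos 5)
Your proposal follows exactly the route the paper takes: it relies on the same first-order reformulations (for $\beta>0$ and $\beta=0$) obtained by integrating \eqref{Syst}, uses the lower bound $\lambda(t)\ge\lambda_0$ to get a uniform $\tfrac{1}{\lambda_0}$-Lipschitz constant for $\nabla\Phi_{\lambda(t)}$, and concludes via Cauchy--Lipschitz plus a Gr\"onwall/blow-up argument for globality, which is precisely the content of the cited proofs in \cite{AL,APR} that the paper defers to. Your write-up is in fact more detailed than the paper's (which only states the reformulations and refers to the literature), and the details you supply are correct.
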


\end{section}

\begin{section}{Fast convergence rates of the function and Moreau envelope values}

This chapter is devoted to obtaining the rates of convergence for the Moreau envelope values and for the values of function $\Phi$ itself. We will heavily rely on the tools and techniques provided by the Lyapunov analysis. We introduce a slightly modified energy function from \cite{BCL}. For $2 \leq q \leq \alpha - 1$ we define
\begin{equation}\label{Energy_0}
\begin{split}
    E_q(t) \ = \ &(t^2 b(t) - \beta (q + 2 - \alpha)t) \left( \Phi_{\lambda(t)} (x(t)) - \Phi^* \right) + \frac{t^2 \varepsilon(t)}{2} \| x(t) \|^2 \\
    &+ \ \frac{1}{2} \| q(x(t) - x^*) + t (\dot x(t) + \beta \nabla \Phi_{\lambda(t)} (x(t)) \|^2 + \frac{q (\alpha - 1 - q)}{2} \| x(t) - x^* \|^2.
\end{split}
\end{equation}

The key assumptions which are essential to our analysis are the following: for all $t \geq t_0$
\begin{framed}
\begin{equation}\label{A_6_0}
    (\alpha - 3) t b(t) - t^2 \dot b(t) + \beta (2 - \alpha) \ \geq \ 0,
\end{equation}
\begin{equation}\label{A_9}
    \exists a \geq 1 \text{ such that } \  2 \dot \varepsilon(t) \ \leq \ - a \beta \varepsilon^2(t),
\end{equation}
\begin{equation}\label{integrability_T}
    \int_{t_0}^{+\infty} t \varepsilon(t) dt \ < \ +\infty,
\end{equation}
\begin{equation}\label{tbt}
    b(t_0) \ \geq \ \frac{\beta}{t_0} \text{ and } b(t_0) > \frac{1}{a}
\end{equation}
and
\begin{equation}\label{A_6}
    \exists \delta: 0 < \delta < \alpha - 3 \text{ such that } (\alpha - 3) t b(t) - t^2 \dot b(t) + \beta (2 - \alpha) \ \geq \ \delta t b(t).
\end{equation}
\end{framed}

\begin{theorem}\label{Th_0}

Suppose $\alpha \geq 3$ and assume that \eqref{A_6_0}, \eqref{A_9}, \eqref{integrability_T}, \eqref{tbt} hold for all $t \geq t_0$. Then
\[
\Phi_{\lambda(t)} (x(t)) - \Phi^* \ = \ O\left( \frac{1}{t^2 b(t)} \right), \text{ as } t \to +\infty,
\]
\[
\| \dot x(t) + \beta \nabla \Phi_{\lambda(t)} (x(t)) \| \ = \ O\left( \frac{1}{t} \right), \text{ as } t \to +\infty.
\]
Moreover, one has for all $a \geq 1$
\begin{align*}
    &t \varepsilon(t) \| x(t) - x^* \|^2, \ t \varepsilon(t) \| x(t) \|^2, \ \left( (\alpha - 3) t b(t) - t^2 \dot b(t) + \beta (2 - \alpha) \right) \left( \Phi_{\lambda(t)} (x(t)) - \Phi^* \right) \text{ and } \\
    &\left( \left( t^2 b(t) - \beta t \right) \frac{\dot \lambda(t)}{2} - \beta^2 t + \beta t^2 \left( b(t) - \frac{1}{a} \right) \right) \| \nabla \Phi_{\lambda(t)} (x(t)) \|^2 \in L^1 \big( [t_0, +\infty), \mathbb{R} \big).
\end{align*}
If, in addition, $\alpha > 3$ and \eqref{A_6} holds, then the trajectory $x$ is bounded and 
\[
\int_{t_0}^{+\infty} t \| \dot x(t) \|^2 dt \ < \ +\infty
\]
and
\[
\int_{t_0}^{+\infty} t b(t) \left( \Phi_{\lambda(s)} (x(s)) - \Phi^* \right) \ < \ +\infty.
\]

\end{theorem}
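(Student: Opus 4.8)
The plan is to run a Lyapunov analysis built on the family $E_q$ from \eqref{Energy_0}, and the choice of $q$ is the point: I would take $q=\alpha-1$ for the first block of conclusions (for this value the last summand of $E_q$ vanishes, and the combination $(\alpha-3)tb(t)-t^2\dot b(t)+\beta(2-\alpha)$ of \eqref{A_6_0} turns up on its own), and, when $\alpha>3$, some $q$ slightly below $\alpha-1$ for the two integral estimates of the last part. It is convenient to abbreviate $v(t):=\dot x(t)+\beta\nabla\Phi_{\lambda(t)}(x(t))$, so that \eqref{Syst} reads $\dot v(t)+\frac{\alpha}{t}v(t)+(b(t)-\frac{\alpha\beta}{t})\nabla\Phi_{\lambda(t)}(x(t))+\varepsilon(t)x(t)=0$; this lets me substitute $t\dot v(t)$ by a linear combination of $v(t)$, $\nabla\Phi_{\lambda(t)}(x(t))$ and $x(t)$ when differentiating the squared term of $E_q$.

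First I would compute $\dot E_q(t)$ along a solution of \eqref{Syst}. The envelope term is differentiated by the chain rule together with \eqref{Moreau_lambda_deriv}, $\frac{d}{dt}\Phi_{\lambda(t)}(x(t))=\langle\nabla\Phi_{\lambda(t)}(x(t)),\dot x(t)\rangle-\frac{\dot\lambda(t)}{2}\|\nabla\Phi_{\lambda(t)}(x(t))\|^2$; the three squared terms are expanded and $t\dot v(t)$ eliminated. I expect three structural cancellations: the $\langle x(t)-x^*,v(t)\rangle$ terms cancel — their coefficients being $(q+1-\alpha)$ and $(\alpha-1-q)$, which is exactly what the summand $\frac{q(\alpha-1-q)}{2}\|x(t)-x^*\|^2$ is engineered to manufacture — the $\langle\nabla\Phi_{\lambda(t)}(x(t)),v(t)\rangle$ terms cancel, and the Tikhonov term $t^2\varepsilon(t)\langle x(t),v(t)\rangle$ coming from $\frac{d}{dt}(\frac{t^2\varepsilon}{2}\|x\|^2)$ cancels against the one produced by the squared term. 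What remains of the inner-product part I would bound from below using convexity of $\Phi_{\lambda(t)}$ together with $\frac1{\lambda(t)}$-Lipschitz continuity of $\nabla\Phi_{\lambda(t)}$, in the form $\langle\nabla\Phi_{\lambda(t)}(x(t)),x(t)-x^*\rangle\ge\Phi_{\lambda(t)}(x(t))-\Phi^*+\frac{\lambda(t)}{2}\|\nabla\Phi_{\lambda(t)}(x(t))\|^2$ (here $\Phi_{\lambda(t)}(x^*)=\Phi^*$ and $\nabla\Phi_{\lambda(t)}(x^*)=0$ because $x^*\in\argmin\Phi$), while the leftover $\varepsilon$-terms are treated via $\langle x(t),x(t)-x^*\rangle\ge\frac12\|x(t)\|^2+\frac12\|x(t)-x^*\|^2-\frac12\|x^*\|^2$ and Young's inequality.

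The aim is a differential inequality $\dot E_q(t)\le-\mathcal N_q(t)+\gamma(t)$ with $\mathcal N_q$ nonnegative (up to a term supported on a compact interval) and $\gamma\in L^1([t_0,+\infty),\mathbb R)$. The hypotheses enter as follows: for $q=\alpha-1$ the coefficient of $\Phi_{\lambda(t)}(x(t))-\Phi^*$ in $\mathcal N_q$ works out to be exactly $(\alpha-3)tb(t)-t^2\dot b(t)+\beta(2-\alpha)$, made nonnegative by \eqref{A_6_0}; \eqref{tbt} and monotonicity of $b$ give $tb(t)\ge\beta$ and $b(t)>\frac1a$, which ensures $t^2b(t)-\beta t\ge0$ (so $E_{\alpha-1}\ge0$), legitimates the above convexity estimate (its multiplier being $tb(t)-\beta\ge0$), and fixes the sign of the $\|\nabla\Phi_{\lambda(t)}(x(t))\|^2$ coefficient $(t^2b-\beta t)\frac{\dot\lambda}{2}-\beta^2t+\beta t^2(b-\frac1a)$ in $\mathcal N_q$ for $t$ large (using $\dot\lambda\ge0$); $\alpha\ge3$ turns the $\|x(t)\|^2$ contribution into $(\alpha-2)t\varepsilon(t)\|x(t)\|^2\ge0$; \eqref{A_9} is exactly what absorbs $\frac{t^2\dot\varepsilon(t)}{2}\|x(t)\|^2$ against the $\beta t^2\varepsilon^2(t)\|x(t)\|^2$ produced when estimating $t^2\varepsilon(t)\langle x(t),\nabla\Phi_{\lambda(t)}(x(t))\rangle$ by Young; and \eqref{integrability_T} makes $\gamma$ (multiples of $t\varepsilon(t)\|x^*\|^2$) integrable. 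Integrating then gives that $E_q$ is bounded and $\mathcal N_q\in L^1$; boundedness of $E_{\alpha-1}$ yields $\Phi_{\lambda(t)}(x(t))-\Phi^*=O(\frac1{t^2b(t)})$ (using $t^2b(t)-\beta t\ge\frac12t^2b(t)$ for $t$ large) and boundedness of $\|(\alpha-1)(x(t)-x^*)+tv(t)\|$, and the four $L^1$ assertions are read off from $\mathcal N_{\alpha-1}\in L^1$. To obtain $\|\dot x(t)+\beta\nabla\Phi_{\lambda(t)}(x(t))\|=O(1/t)$ I would first show $x$ is bounded: completing the square in the bounded quantity $\|(\alpha-1)(x(t)-x^*)+tv(t)\|^2$ gives $\langle x(t)-x^*,v(t)\rangle\le\frac{C}{(\alpha-1)t}-\frac{\alpha-1}{2t}\|x(t)-x^*\|^2$, and since $\frac{d}{dt}\|x(t)-x^*\|^2=2\langle x(t)-x^*,v(t)\rangle-2\beta\langle x(t)-x^*,\nabla\Phi_{\lambda(t)}(x(t))\rangle\le2\langle x(t)-x^*,v(t)\rangle$, the function $g(t):=\|x(t)-x^*\|^2$ satisfies $\dot g(t)+\frac{\alpha-1}{t}g(t)\le\frac{2C}{(\alpha-1)t}$; multiplying by $t^{\alpha-1}$ and integrating gives $g(t)=O(1)$, whence $\|tv(t)\|\le\|(\alpha-1)(x(t)-x^*)+tv(t)\|+(\alpha-1)\|x(t)-x^*\|=O(1)$.

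For the last statement, assume $\alpha>3$ and \eqref{A_6}, and take $q$ with $2\le q<\alpha-1$ close enough to $\alpha-1$. The slack $\delta tb(t)$ in \eqref{A_6} keeps the coefficient of $\Phi_{\lambda(t)}(x(t))-\Phi^*$ in $\mathcal N_q$ bounded below by a positive multiple of $tb(t)$, so $\int_{t_0}^{+\infty}tb(t)(\Phi_{\lambda(t)}(x(t))-\Phi^*)\,dt<+\infty$; simultaneously the coefficient $(q+1-\alpha)$ of $t\|v(t)\|^2$ in $\dot E_q$ is now strictly negative, producing a genuine $(\alpha-1-q)t\|v(t)\|^2$ in $\mathcal N_q$, so that $\int_{t_0}^{+\infty}t\|v(t)\|^2\,dt<+\infty$; since moreover $\int_{t_0}^{+\infty}t\|\nabla\Phi_{\lambda(t)}(x(t))\|^2\,dt<+\infty$ (from the gradient $L^1$ term and $tb(t)\ge\beta$), the inequality $\|\dot x\|^2\le2\|v\|^2+2\beta^2\|\nabla\Phi_{\lambda(t)}(x(t))\|^2$ yields $\int_{t_0}^{+\infty}t\|\dot x(t)\|^2\,dt<+\infty$, while $\frac{q(\alpha-1-q)}{2}\|x(t)-x^*\|^2$ being now a genuine summand of the bounded $E_q$ re-proves boundedness of the trajectory. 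The main obstacle throughout is this middle bookkeeping — arranging the long expression for $\dot E_q(t)$ so that every surviving term is either sign-definite via exactly one of \eqref{A_6_0}, \eqref{A_9}, \eqref{tbt} or integrable via \eqref{integrability_T}. The delicate points are the calibration of the Young weights for the Tikhonov cross terms $t^2\varepsilon(t)\langle x(t),\nabla\Phi_{\lambda(t)}(x(t))\rangle$ and $\varepsilon(t)\langle x(t),(\alpha-1)(x(t)-x^*)+tv(t)\rangle$ against $\|x(t)\|^2$ and $\|\nabla\Phi_{\lambda(t)}(x(t))\|^2$ — this is where the exponent $a\ge1$ of \eqref{A_9} is consumed — and the way the $-\frac{\dot\lambda(t)}{2}\|\nabla\Phi_{\lambda(t)}(x(t))\|^2$ term from \eqref{Moreau_lambda_deriv} merges with the gradient terms generated by the two damping mechanisms, a merger that dictates the precise shape of the fourth $L^1$ quantity in the statement.
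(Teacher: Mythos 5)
Your proposal is correct and follows essentially the same route as the paper: the same family $E_q$ with $q=\alpha-1$ for the first block of conclusions and $q=\alpha-1-\delta$ for the integral estimates under \eqref{A_6}, the same cancellation structure in $\dot E_q$ after substituting the dynamics, the same Young-inequality treatment of $-\beta t^2\varepsilon(t)\langle x(t),\nabla\Phi_{\lambda(t)}(x(t))\rangle$ calibrated by the constant $a$ of \eqref{A_9}, and the same integration argument (derivative bounded above by the integrable function $t\varepsilon(t)\|x^*\|^2$, hence $E_q$ bounded and the negative part integrable). Your bookkeeping variants — cocoercivity of $\nabla\Phi_{\lambda(t)}$ plus the polarization identity in place of the paper's strong-convexity inequality for $x\mapsto (b(t)-\beta/t)\Phi_{\lambda(t)}(x)+\tfrac{\varepsilon(t)}{2}\|x\|^2$, and reaching $\int t\|\dot x\|^2\,dt<+\infty$ through $\int t\|v\|^2\,dt$ and $\int t\|\nabla\Phi_{\lambda(t)}(x)\|^2\,dt$ rather than from the $\|\dot x\|^2$ coefficient directly — are equivalent. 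One genuine addition worth keeping: your Gronwall argument deriving boundedness of $\|x(t)-x^*\|$ from the boundedness of $\|(\alpha-1)(x(t)-x^*)+tv(t)\|$ is actually required to justify the $O(1/t)$ estimate in the first part of the theorem, where $q=\alpha-1$ makes the $\|x(t)-x^*\|^2$ summand of $E_q$ vanish and trajectory boundedness is not yet available; the paper's proof invokes the corresponding inequality without supplying this step at that stage.
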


\begin{proof}

Let us compute the time derivative of the energy function. For every $t \geq t_0$ using \eqref{Moreau_lambda_deriv} we derive
\begin{align*}
    \dot E_q(t) \ = \ &\left( 2t b(t) + t^2 \dot b(t) - \beta (q + 2 - \alpha) \right) \left( \Phi_{\lambda(t)} (x(t)) - \Phi^* \right) + t^2 \varepsilon(t) \langle x(t), \dot x(t) \rangle \\
    &+ \ q (\alpha - 1 - q) \langle \dot x(t), x(t) - x^* \rangle \\
    &+ \ \left( t^2 b(t) - \beta (q + 2 - \alpha)t \right) \left( \langle \nabla \Phi_{\lambda(t)} (x(t)), \dot x(t) \rangle - \frac{\dot \lambda(t)}{2} \| \nabla \Phi_{\lambda(t)} (x(t)) \|^2 \right) \\
    &+ \ \frac{2t \varepsilon(t) + t^2 \dot \varepsilon(t)}{2} \| x(t) \|^2 \\
    &+ \ \Bigg\langle q(x(t) - x^*) + t (\dot x(t) + \beta \nabla \Phi_{\lambda(t)} (x(t)), (q + 1) \dot x(t) + \beta \nabla \Phi_{\lambda(t)} (x(t)) \\
    &+ \ t \left( \ddot x(t) + \beta \frac{d}{dt} \nabla \Phi_{\lambda(t)} (x(t)) \right) \Bigg\rangle.
\end{align*}
Define $v(t) = q(x(t) - x^*) + t (\dot x(t) + \beta \nabla \Phi_{\lambda(t)} (x(t))$. Using \eqref{Syst} to replace $\ddot x(t) + \beta \frac{d}{dt} \nabla \Phi_{\lambda(t)} (x(t))$ we obtain
\begin{align*}
    \langle v(t), \dot v(t) \rangle \ = \ &\Bigg\langle q(x(t) - x^*) + t (\dot x(t) + \beta \nabla \Phi_{\lambda(t)} (x(t)), (q + 1 - \alpha) \dot x(t) \\
    &+ \ \left( \beta - t b(t) \right) \nabla \Phi_{\lambda(t)} (x(t)) - t \varepsilon(t) x(t) \Bigg\rangle \\
    = \ &q (q + 1 - \alpha) \left\langle x(t) - x^*, \dot x(t) \right\rangle + (q + 1 - \alpha) t \| \dot x(t) \|^2 \\
    &+ \ \left( \beta (q + 2 - \alpha)t - t^2 b(t) \right) \left\langle \nabla \Phi_{\lambda(t)} (x(t)), \dot x(t) \right\rangle + \left( \beta^2 t - \beta t^2 b(t) \right) \| \nabla \Phi_{\lambda(t)} (x(t)) \|^2 \\
    &- \ t^2 \varepsilon(t) \langle x(t), \dot x(t) \rangle - \beta t^2 \varepsilon(t) \langle x(t), \nabla \Phi_{\lambda(t)} (x(t)) \rangle \\
    &- \ q t \left\langle \left( b(t) - \frac{\beta}{t} \right) \nabla \Phi_{\lambda(t)} (x(t)) + \varepsilon(t) x(t), x(t) - x^* \right\rangle.
\end{align*}
By \eqref{tbt} one has $b(t) - \frac{\beta}{t} > 0$ for all $t \geq t_0$, and thus for a strongly convex function $\varphi_t (x) \ = \ \left( b(t) - \frac{\beta}{t} \right) \Phi_{\lambda(t)} (x) + \frac{\varepsilon(t)}{2} \| x \|^2$ we have
\[
\varphi_t(x^*) - \varphi_t(x) \ \geq \ \left\langle \nabla \varphi_t(x), x^* - x \right\rangle + \frac{\varepsilon(t)}{2} \| x^* - x \|^2
\]
or
\begin{equation*}\label{Str_conv_0}
\begin{split}
    &- q t \left\langle \left( b(t) - \frac{\beta}{t} \right) \nabla \Phi_{\lambda(t)} (x(t)) + \varepsilon(t) x(t), x(t) - x^* \right\rangle \\ \leq \ &- q t \left( b(t) - \frac{\beta}{t} \right) \left( \Phi_{\lambda(t)} (x(t)) - \Phi^* \right) - q t \frac{\varepsilon(t)}{2} \| x(t) \|^2 - q t \frac{\varepsilon(t)}{2} \| x(t) - x^* \|^2 + q t \frac{\varepsilon(t)}{2} \| x^* \|^2.
\end{split}
\end{equation*}
Therefore, for every $t \geq t_0$
\begin{equation*}
\begin{split}
    \dot E_q(t) \ \leq \ &\left( (2 - q) t b(t) + t^2 \dot b(t) - \beta (2 - \alpha) \right) \left( \Phi_{\lambda(t)} (x(t)) - \Phi^* \right) + (q + 1 - \alpha) t \| \dot x(t) \|^2 \\
    &- \ \left( \left( t^2 b(t) - \beta (q + 2 - \alpha)t \right) \frac{\dot \lambda(t)}{2} - \beta^2 t + \beta t^2 b(t) \right) \| \nabla \Phi_{\lambda(t)} (x(t)) \|^2 \\
    &+ \ \frac{(2 - q)t \varepsilon(t) + t^2 \dot \varepsilon(t)}{2} \| x(t) \|^2 - q t \frac{\varepsilon(t)}{2} \| x(t) - x^* \|^2 + q t \frac{\varepsilon(t)}{2} \| x^* \|^2 \\
    &- \ \beta t^2 \varepsilon(t) \langle x(t), \nabla \Phi_{\lambda(t)} (x(t)) \rangle.
\end{split}
\end{equation*}
Notice that for $a \geq 1$
\[
-\beta t^2 \varepsilon(t) \langle x(t), \nabla \Phi_{\lambda(t)} (x(t)) \rangle \ \leq \ \frac{\beta t^2}{a} \| \nabla \Phi_{\lambda(t)} (x(t)) \|^2 + \frac{a \beta t^2 \varepsilon^2(t)}{4} \| x(t) \|^2,
\]
which leads to
\begin{equation}\label{Energy_Pre}
\begin{split}
    \dot E_q(t) \ \leq \ &\left( (2 - q) t b(t) + t^2 \dot b(t) - \beta (2 - \alpha) \right) \left( \Phi_{\lambda(t)} (x(t)) - \Phi^* \right) + (q + 1 - \alpha) t \| \dot x(t) \|^2 \\
    &- \ \left( \Big( t^2 b(t) - \beta (q + 2 - \alpha)t \Big) \frac{\dot \lambda(t)}{2} - \beta^2 t + \beta t^2 \left( b(t) - \frac{1}{a} \right) \right) \| \nabla \Phi_{\lambda(t)} (x(t)) \|^2 \\
    &+ \ \frac{2 (2 - q) t \varepsilon(t) + 2 t^2 \dot \varepsilon(t) + a \beta t^2 \varepsilon^2(t)}{4} \| x(t) \|^2 - q t \frac{\varepsilon(t)}{2} \| x(t) - x^* \|^2 + q t \frac{\varepsilon(t)}{2} \| x^* \|^2
\end{split}
\end{equation}
for every $t \geq t_0$. Note that $b(t) - \frac{1}{a} > 0$ for all $t \geq t_0$. Then, due to the properties of $b$, there exists $t^* \geq t_0$ such that $t^2 b(t) - \beta (q + 2 - \alpha)t \ \geq \ 0$ for all $t \geq t^*$ and all $q \in (2, \alpha - 1]$. Therefore, since  $\dot \lambda(t) \geq 0$ for all $t \geq t_0$, there exists $t^{**}$, namely, $t^{**} = \max \left\{ t^*, \frac{\beta}{b(t_0) - \frac{1}{a}} \right\}$, such that
\[
\left( \Big( t^2 b(t) - \beta (q + 2 - \alpha)t \Big) \frac{\dot \lambda(t)}{2} - \beta^2 t + \beta t^2 \left( b(t) - \frac{1}{a} \right) \right) \ \geq \ 0 \text{ for all } t \geq t^{**}.
\]
Consider now two cases with $t \geq t^{**}$. First, take $q = \alpha - 1$ to obtain from \eqref{Energy_Pre}
\begin{equation}\label{Energy_Pre_0}
\begin{split}
    \dot E_{\alpha - 1}(t) \ \leq \ &\left( (3 - \alpha) t b(t) + t^2 \dot b(t) - \beta (2 - \alpha) \right) \left( \Phi_{\lambda(t)} (x(t)) - \Phi^* \right) \\
    &- \ \left( \Big( t^2 b(t) - \beta t \Big) \frac{\dot \lambda(t)}{2} - \beta^2 t + \beta t^2 \left( b(t) - \frac{1}{a} \right) \right) \| \nabla \Phi_{\lambda(t)} (x(t)) \|^2 \\
    &+ \ \frac{2 (3 - \alpha) t \varepsilon(t) + 2 t^2 \dot \varepsilon(t) + a \beta t^2 \varepsilon^2(t)}{4} \| x(t) \|^2 - (\alpha - 1) t \frac{\varepsilon(t)}{2} \| x(t) - x^* \|^2 \\
    &+ \ (\alpha - 1) t \frac{\varepsilon(t)}{2} \| x^* \|^2
\end{split}
\end{equation}
for every $t \geq t_0$. Under the assumptions \eqref{A_6_0} and \eqref{A_9} we conclude starting from $t^{**}$ that
\begin{equation}\label{Energy_FF_0}
    \dot E_{\alpha - 1}(t) \ \leq \ \frac{(\alpha - 1) t \varepsilon(t)}{2} \| x^* \|^2.
\end{equation}
Under the assumption \eqref{integrability_T} using the fact that $t \mapsto E_{\alpha - 1}(t)$ is bounded from below we deduce the existence of the limit $\lim_{t \to +\infty} E_{\alpha - 1}(t)$ due to the Lemma \ref{L_1} and, therefore, $t \mapsto E_{\alpha - 1}(t)$ is bounded, which leads to
\[
\Phi_{\lambda(t)} (x(t)) - \Phi^* \ = \ O\left( \frac{1}{t^2 b(t)} \right), \text{ as } t \to +\infty.
\]
From the boundedness of $t \mapsto \| (\alpha - 1)(x(t) - x^*) + t (\dot x(t) + \beta \nabla \Phi_{\lambda(t)} (x(t)) \|^2$ we obtain
\[
\| \dot x(t) + \beta \nabla \Phi_{\lambda(t)} (x(t)) \| \ = \ O\left( \frac{1}{t} \right), \text{ as } t \to +\infty,
\]
using the following inequality, which is true for every $t \geq t_0$
\[
t^2 \| \dot x(t) + \beta \nabla \Phi_{\lambda(t)} (x(t)) \|^2 \ \leq \ 2\| (\alpha - 1)(x(t) - x^*) + t (\dot x(t) + \beta \nabla \Phi_{\lambda(t)} (x(t)) \|^2 + 2 (\alpha - 1)^2 \| x(t) - x^* \|^2.
\]
Moreover, integrating \eqref{Energy_Pre_0} one may obtain the integrability of $t \varepsilon(t) \| x(t) - x^* \|^2$ as well as the other terms in \eqref{Energy_Pre_0}.
Consider now $q = \alpha - 1 - \delta$, where $\delta$ is defined by \eqref{A_6}. Thus, \eqref{Energy_Pre} becomes
\begin{equation}\label{Energy_Pre_1}
\begin{split}
    \dot E_{\alpha - 1 - \delta}(t) \ \leq \ &\left( (3 - \alpha + \delta) t b(t) + t^2 \dot b(t) - \beta (2 - \alpha) \right) \left( \Phi_{\lambda(t)} (x(t)) - \Phi^* \right) - \delta t \| \dot x(t) \|^2 \\
    &- \ \left( \Big( t^2 b(t) - \beta (1 - \delta)t \Big) \frac{\dot \lambda(t)}{2} - \beta^2 t + \beta t^2 \left( b(t) - \frac{1}{a} \right) \right) \| \nabla \Phi_{\lambda(t)} (x(t)) \|^2 \\
    &+ \ \frac{2 (3 - \alpha + \delta) t \varepsilon(t) + 2 t^2 \dot \varepsilon(t) + a \beta t^2 \varepsilon^2(t)}{4} \| x(t) \|^2 - (\alpha - 1 - \delta) t \frac{\varepsilon(t)}{2} \| x(t) - x^* \|^2 \\
    &+ \ (\alpha - 1 - \delta) t \frac{\varepsilon(t)}{2} \| x^* \|^2.
\end{split}
\end{equation}
Under the assumptions \eqref{A_9} and \eqref{A_6} we deduce $\dot E_{\alpha - 1 - \delta}(t) \ \leq \ \frac{(\alpha - 1 - \delta) t \varepsilon(t)}{2} \| x^* \|^2$ starting from $t^{**}$. Repeating the same argument we derive that $t \mapsto E_{\alpha - 1 - \delta}(t)$ is bounded. The function $t \mapsto \| x(t) - x^* \|$ is also bounded and so is the trajectory $x$. Integrating \eqref{Energy_Pre_1} one may additionally obtain the integrability of $t \| \dot x(t) \|^2$. From the integrability of $\left( (\alpha - 3) t b(t) - t^2 \dot b(t) - \beta (\alpha - 2) \right) \left( \Phi_{\lambda(t)} (x(t)) - \Phi^* \right)$ and \eqref{A_6} we deduce
\[
\int_{t_0}^{+\infty} t b(t) \left( \Phi_{\lambda(s)} (x(s)) - \Phi^* \right) \ < \ +\infty.
\]

\end{proof}

The next theorem shows that we can actually improve the rates of convergence of the function values in case $\alpha > 3$.

\begin{theorem}\label{Th_1}

Assume that $\alpha > 3$ and \eqref{A_9}, \eqref{integrability_T}, \eqref{tbt} and \eqref{A_6} hold. Then
\begin{equation}\label{3}
    t \left\langle \left( b(t) - \frac{\beta}{t} \right) \nabla \Phi_{\lambda(t)} (x(t)), x(t) - x^* \right\rangle \in L^1 \big( [t_0, +\infty), \mathbb{R} \big).
\end{equation}
In addition, $\lim_{t \to +\infty} \psi(t) = 0$, where for $2 \leq q \leq \alpha - 1$
\[
\psi(t) = (t^2 b(t) - \beta (q + 2 - \alpha)t) \left( \Phi_{\lambda(t)} (x(t)) - \Phi^* \right) + \frac{t^2 \varepsilon(t)}{2} \| x(t) \|^2 + \frac{t^2}{2} \| \dot x(t) + \beta \nabla \Phi_{\lambda(t)} (x(t)) \|^2,
\]
which in particular means
\begin{equation}\label{Rates}
\begin{split}
    \Phi_{\lambda(t)} (x(t)) - \Phi^* \ &= \ o \left( \frac{1}{t^2 b(t)} \right) \text{ as } t \to +\infty, \\
    \| \dot x(t) + \beta \nabla \Phi_{\lambda(t)} (x(t)) \| \ &= \ o\left( \frac{1}{t} \right) \text{ as } t \to +\infty
\end{split}
\end{equation}
and moreover,
\[
\Phi(\prox\nolimits_{\lambda(t) \Phi}(x(t))) - \Phi^* \ = \ o \left( \frac{1}{t^2 b(t)} \right) \text{ as } t \to +\infty,
\]
\[
\| \prox\nolimits_{\lambda(t) \Phi} (x(t)) - x(t) \| \ = \ o \left( \frac{\sqrt{\lambda(t)}}{t \sqrt{b(t)}} \right) \text{ as } t \to +\infty
\]
and 
\[
\| \nabla \Phi_{\lambda(t)} (x(t)) \| \ = \ o \left( \frac{1}{t \sqrt{b(t) \lambda(t)}} \right) \text{ as } t \to +\infty.
\]

\end{theorem}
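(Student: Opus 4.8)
The plan is to derive everything from Theorem~\ref{Th_0}, all of whose hypotheses are in force here (in particular \eqref{A_6} implies \eqref{A_6_0}); so I may use that $E_{\alpha-1}$ has a limit, that $x$ is bounded, and the integrability assertions listed there, noting in addition that the last of them together with \eqref{tbt} and $\dot\lambda\ge0$ gives $\int_{t_0}^{+\infty} t^2\|\nabla\Phi_{\lambda(t)}(x(t))\|^2\,dt<+\infty$, since the coefficient of $\|\nabla\Phi_{\lambda(t)}(x(t))\|^2$ there is bounded below by a positive multiple of $t^2$ for $t$ large. For \eqref{3} I would re-open the computation of $\dot E_q$ from the proof of Theorem~\ref{Th_0} with $q=\alpha-1-\delta$ ($\delta$ as in \eqref{A_6}; this uses $\alpha>3$) but now keep the strong convexity of $\varphi_t(x)=(b(t)-\tfrac\beta t)\Phi_{\lambda(t)}(x)+\tfrac{\varepsilon(t)}2\|x\|^2$ in sharp form, i.e.\ retain its non‑negative defect
\[
D(t):=\big\langle\nabla\varphi_t(x(t)),x(t)-x^*\big\rangle-\big(\varphi_t(x(t))-\varphi_t(x^*)\big)-\tfrac{\varepsilon(t)}2\|x(t)-x^*\|^2\ \ge\ 0
\]
instead of bounding it away. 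Retracing the estimates that produced \eqref{Energy_Pre_1} without discarding any non‑negative term ($\delta t\|\dot x(t)\|^2$, $(\alpha-1-\delta)tD(t)$, the eventually non‑negative multiple of $\|\nabla\Phi_{\lambda(t)}(x(t))\|^2$, and the squares from the Young step and from \eqref{A_9}), one gets for $t\ge t^{**}$ that $\dot E_{\alpha-1-\delta}(t)+(\alpha-1-\delta)tD(t)\le\tfrac{(\alpha-1-\delta)t\varepsilon(t)}2\|x^*\|^2$; as $E_{\alpha-1-\delta}$ is bounded below and $t\varepsilon(t)\in L^1$ by \eqref{integrability_T}, integration yields $\int_{t_0}^{+\infty}tD(t)\,dt<+\infty$. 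Expanding $D(t)$ via $\nabla\varphi_t(x)=(b(t)-\tfrac\beta t)\nabla\Phi_{\lambda(t)}(x)+\varepsilon(t)x$ and $\nabla\Phi_{\lambda(t)}(x^*)=0$, the quantity in \eqref{3} equals $tD(t)$ plus a linear combination of $t\varepsilon(t)\langle x(t),x(t)-x^*\rangle$, $tb(t)(\Phi_{\lambda(t)}(x(t))-\Phi^*)$, $t\varepsilon(t)\|x(t)\|^2$, $t\varepsilon(t)\|x(t)-x^*\|^2$, $t\varepsilon(t)\|x^*\|^2$ and $\beta(\Phi_{\lambda(t)}(x(t))-\Phi^*)$, each in $L^1$ (the cross term by Cauchy--Schwarz and the two preceding bounds, $t\varepsilon(t)\|x^*\|^2$ by \eqref{integrability_T}, and $\beta(\Phi_{\lambda(t)}(x(t))-\Phi^*)$ because $\Phi_{\lambda(t)}(x(t))-\Phi^*=O(1/(t^2b(t)))$ from Theorem~\ref{Th_0}); since the quantity in \eqref{3} is $\ge0$ by monotonicity of $\nabla\Phi_{\lambda(t)}$ and \eqref{tbt}, \eqref{3} follows.

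Next I would show $\psi$ has a limit. The argument that made $E_{\alpha-1}$ convergent in Theorem~\ref{Th_0} applies verbatim to $E_{\alpha-1-\delta}$ (from \eqref{Energy_Pre_1}, \eqref{A_9}, \eqref{A_6} one gets $\dot E_{\alpha-1-\delta}(t)\le\tfrac{(\alpha-1-\delta)t\varepsilon(t)}2\|x^*\|^2\in L^1$ and $E_{\alpha-1-\delta}$ is bounded below, so Lemma~\ref{L_1} gives $\lim_{t\to+\infty}E_{\alpha-1-\delta}(t)$). Expanding the square in \eqref{Energy_0} gives, for every admissible $q$,
\[
E_q(t)=\psi(t)+\frac{q(\alpha-1)}2\|x(t)-x^*\|^2+q\,t\big\langle x(t)-x^*,\dot x(t)+\beta\nabla\Phi_{\lambda(t)}(x(t))\big\rangle ,
\]
with $\psi$ carrying the same $q$. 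Subtracting this identity for $q=\alpha-1$ and for $q=\alpha-1-\delta$ (the $\psi$‑difference is $-\beta\delta\,t(\Phi_{\lambda(t)}(x(t))-\Phi^*)$), dividing by $\delta$, and using $t(\Phi_{\lambda(t)}(x(t))-\Phi^*)=O(1/(tb(t)))\to0$, one finds that $\tfrac{\alpha-1}2\|x(t)-x^*\|^2+t\langle x(t)-x^*,\dot x(t)+\beta\nabla\Phi_{\lambda(t)}(x(t))\rangle$ converges; multiplying by $\alpha-1$ and subtracting from the identity with $q=\alpha-1$ shows that $\psi$ (with $q=\alpha-1$) has a limit.

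To finish, each of the three summands of $\psi$ is $\ge0$ for all $t\ge t_0$ (for the first by \eqref{tbt}), so $\psi\ge0$, and $\psi(t)/t$ is bounded above by a fixed multiple of $tb(t)(\Phi_{\lambda(t)}(x(t))-\Phi^*)+t\varepsilon(t)\|x(t)\|^2+t\|\dot x(t)\|^2+t\|\nabla\Phi_{\lambda(t)}(x(t))\|^2\in L^1$. Since a non‑negative function that has a limit and is integrable against $dt/t$ must tend to $0$, $\psi(t)\to0$ for $q=\alpha-1$, hence for every $q\in[2,\alpha-1]$ because $\psi$ changes by $\beta(\alpha-1-q)t(\Phi_{\lambda(t)}(x(t))-\Phi^*)\to0$. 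Reading off the three terms gives $\Phi_{\lambda(t)}(x(t))-\Phi^*=o(1/(t^2b(t)))$ and $\|\dot x(t)+\beta\nabla\Phi_{\lambda(t)}(x(t))\|=o(1/t)$, i.e.\ \eqref{Rates}; the remaining three estimates follow from the envelope rate via $\Phi_{\lambda(t)}(x(t))=\Phi(\prox_{\lambda(t)\Phi}(x(t)))+\tfrac{\lambda(t)}2\|\nabla\Phi_{\lambda(t)}(x(t))\|^2$, $\|x(t)-\prox_{\lambda(t)\Phi}(x(t))\|=\lambda(t)\|\nabla\Phi_{\lambda(t)}(x(t))\|$ (see \eqref{Morprox}) and $\Phi(\prox_{\lambda(t)\Phi}(x(t)))\ge\Phi^*$, which yield $0\le\Phi(\prox_{\lambda(t)\Phi}(x(t)))-\Phi^*\le\Phi_{\lambda(t)}(x(t))-\Phi^*$ and $\tfrac{\lambda(t)}2\|\nabla\Phi_{\lambda(t)}(x(t))\|^2\le\Phi_{\lambda(t)}(x(t))-\Phi^*$, hence $\|\nabla\Phi_{\lambda(t)}(x(t))\|=o(1/(t\sqrt{b(t)\lambda(t)}))$ and $\|\prox_{\lambda(t)\Phi}(x(t))-x(t)\|=\lambda(t)\|\nabla\Phi_{\lambda(t)}(x(t))\|=o(\sqrt{\lambda(t)}/(t\sqrt{b(t)}))$.

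The genuinely new — and hardest — point is the first step: isolating the strong‑convexity defect $D(t)$ inside the $\dot E_{\alpha-1-\delta}$ computation without accidentally folding it into a term one later discards, and then verifying that its weighted integral decomposes into exactly the pieces already proved integrable in Theorem~\ref{Th_0}. Everything after that is recombination of Theorem~\ref{Th_0} together with the elementary principle that a non‑negative function with a limit that is integrable against $dt/t$ must vanish at infinity.
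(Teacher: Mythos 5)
Your proposal is correct, and its overall architecture coincides with the paper's: the same energy function $E_q$, the same integrability facts imported from Theorem~\ref{Th_0} (including the observation that the coefficient of $\| \nabla \Phi_{\lambda(t)}(x(t)) \|^2$ is eventually bounded below by a positive multiple of $t^2$, which the paper also exploits in its step ({\em iii})), the same $E_{q_1}-E_{q_2}$ device with $q_1=\alpha-1$, $q_2=\alpha-1-\delta$, and the same closing argument that a non-negative $\psi$ with a limit and $\psi(t)/t\in L^1$ must vanish. Your proof of \eqref{3} via the strong-convexity defect $D(t)$ is only a repackaging of what the paper does (the paper keeps the inner product $-qt\langle \nabla\varphi_t(x(t)), x(t)-x^*\rangle$ intact in $\dot E_q$, integrates, and absorbs every other term into quantities already known to be integrable; you keep the defect and decompose the inner product instead) — the two computations are term-for-term equivalent. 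The one genuine divergence is in the middle step: the paper passes through its part ({\em ii}), introducing the auxiliary functions $k(t)$ and $r(t)$ and invoking Lemma~\ref{L_2} together with \eqref{3} to obtain the \emph{separate} limits of $\|x(t)-x^*\|$ and of $t\langle \dot x(t)+\beta\nabla\Phi_{\lambda(t)}(x(t)), x(t)-x^*\rangle$, whereas you observe that only the limit of the combination $\tfrac{\alpha-1}{2}\|x(t)-x^*\|^2 + t\langle \dot x(t)+\beta\nabla\Phi_{\lambda(t)}(x(t)), x(t)-x^*\rangle$ is needed to extract $\lim_{t\to+\infty}\psi(t)$ from $\lim_{t\to+\infty}E_{\alpha-1}(t)$, and that this combined limit already falls out of the $E_{q_1}-E_{q_2}$ computation. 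This buys a shorter route that does not rely on Lemma~\ref{L_2} and makes the $\psi$-part logically independent of \eqref{3}; what it gives up is the separate convergence of $\|x(t)-x^*\|$, which is not claimed in the theorem but is the kind of byproduct one typically wants for weak convergence of the trajectory.
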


\begin{proof}

({\em i}) Let us first prove an auxiliary estimate \eqref{3}, which will allows us to obtain the rest of the desired results. We return to
\begin{align*}
    \dot E_q(t) \ \leq \ &\left( 2t b(t) + t^2 \dot b(t) - \beta (q + 2 - \alpha) \right) \left( \Phi_{\lambda(t)} (x(t)) - \Phi^* \right) + (q + 1 - \alpha) t \| \dot x(t) \|^2 \\
    &- \ \left( \left( t^2 b(t) - \beta (q + 2 - \alpha)t \right) \frac{\dot \lambda(t)}{2} - \beta^2 t + \beta t^2 \left( b(t) - \frac{1}{a} \right) \right) \| \nabla \Phi_{\lambda(t)} (x(t)) \|^2 \\
    &+ \ \frac{4 t \varepsilon(t) + 2 t^2 \dot \varepsilon(t) + a \beta t^2 \varepsilon^2(t)}{4} \| x(t) \|^2 \\
    &- \ q t \left\langle \left( b(t) - \frac{\beta}{t} \right) \nabla \Phi_{\lambda(t)} (x(t)) + \varepsilon(t) x(t), x(t) - x^* \right\rangle.
\end{align*}
Under condition \eqref{A_9} we deduce starting from $t^{**}$
\begin{align*}
    \dot E_q(t) \ \leq \ &\left( 2t b(t) + t^2 \dot b(t) - \beta (q + 2 - \alpha) \right) \left( \Phi_{\lambda(t)} (x(t)) - \Phi^* \right) \\
    &+ \ t \varepsilon(t) \| x(t) \|^2 - q t \left\langle \left( b(t) - \frac{\beta}{t} \right) \nabla \Phi_{\lambda(t)} (x(t)) + \varepsilon(t) x(t), x(t) - x^* \right\rangle.
\end{align*}
Integrating the last inequality on $[t_0, t]$ we obtain
\begin{equation}\label{5}
\begin{split}
    &\int_{t_0}^t q s \left\langle \left( b(s) - \frac{\beta}{s} \right) \nabla \Phi_{\lambda(s)} (x(s)), x(s) - x^* \right\rangle ds \ \leq \ E_q(t_0) - E_q(t) + \int_{t_0}^t s \varepsilon(s) \| x(s) \|^2 ds \\
    &+ \ \int_{t_0}^t \left( 2s b(s) + s^2 \dot b(s) - \beta (q + 2 - \alpha) \right) \left( \Phi_{\lambda(s)} (x(s)) - \Phi^* \right) ds - \int_{t_0}^t q s \langle \varepsilon(s) x(s), x(s) - x^* \rangle.
\end{split}
\end{equation}
Since the gradient $ \nabla \Phi_{\lambda} $ is monotone, we know that $\left\langle \nabla \Phi_{\lambda(t)} (x(t)), x(t) - x^* \right\rangle \geq 0$. Moreover,
\begin{equation}\label{A_5_0}
    -q t \langle \varepsilon(t) x(t), x(t) - x^* \rangle \ \leq \ \frac{q t \varepsilon(t)}{2} \left( \| x(t) \|^2 + \| x(t) - x^* \|^2 \right).
\end{equation}
Notice that by \eqref{A_6} we have 
\[
(\alpha - 3 - \delta) t b(t) - t^2 \dot b(t) + \beta (2 - \alpha) \ > \ 0
\]
or
\[
(\alpha - 3 - \delta) t b(t) - t^2 \dot b(t) \ > \ \beta (\alpha - 2).
\]
Obviuosly,
\[
(\alpha - 3 - \delta) t b(t) - t^2 \dot b(t) \ > \ \beta (\alpha - 2) \ > \ \beta (\alpha - 2 - q) \text{ for every } q \in (2, \alpha - 1).
\]
Introducing $\delta_1 = \alpha - 1 - \delta > 0$ (by the choice of $\delta$) we obtain
\[
(\delta_1 - 2) t b(t) - t^2 \dot b(t) \ > \ -\beta (q + 2 - \alpha)
\]
or
\[
2 t b(t) + t^2 \dot b(t) - \beta (q + 2 - \alpha) \ < \ \delta_1 t b(t).
\]
From Theorem \ref{Th_0} we know that $ t b(t) \left( \Phi_{\lambda(s)} (x(s)) - \Phi^* \right)$ is integrable and therefore so is \\
$\left( 2 t b(t) + t^2 \dot b(t) - \beta (q + 2 - \alpha) \right) \left( \Phi_{\lambda(s)} (x(s)) - \Phi^* \right)$. Since the function $t \mapsto E_q(t)$ is bounded and the rest of the right hand side of \eqref{5} belongs to $L^1 \big( [t_0, +\infty), \mathbb{R} \big)$ by Theorem \ref{Th_0} and \eqref{A_5_0}, we conclude with \eqref{3} due to \eqref{tbt}. 

({\em ii}) In order to derive the convergence rates for the quantities of our interest we require some additional results. Our nearest goal is to establish the existence of the limits 
\[ 
\lim_{t \to +\infty} \| x(t) - x^* \| \text{ and } \lim_{t \to +\infty} t \left\langle \dot x(t) + \beta \nabla \Phi_{\lambda(t)} (x(t)), x(t) - x^* \right\rangle.
\]  
Consider (as was done in \cite{BCL, BK}) for two different $q_1, q_2 \in (2, \alpha - 1)$ and for every $t \geq t_0$ the difference
\begin{align*}
    E_{q_1}(t) - E_{q_2}(t) \ = \ &(t^2 b(t) - \beta (q_1 + 2 - \alpha)t) \left( \Phi_{\lambda(t)} (x(t)) - \Phi^* \right) + \frac{t^2 \varepsilon(t)}{2} \| x(t) \|^2 \\
    &+ \ \frac{1}{2} \| q_1 (x(t) - x^*) + t (\dot x(t) + \beta \nabla \Phi_{\lambda(t)} (x(t)) \|^2 + \frac{q_1 (\alpha - 1 - q_1)}{2} \| x(t) - x^* \|^2 \\
    &- (t^2 b(t) - \beta (q_2 + 2 - \alpha)t) \left( \Phi_{\lambda(t)} (x(t)) - \Phi^* \right) - \frac{t^2 \varepsilon(t)}{2} \| x(t) \|^2 \\
    &- \ \frac{1}{2} \| q_2 (x(t) - x^*) + t (\dot x(t) + \beta \nabla \Phi_{\lambda(t)} (x(t)) \|^2 - \frac{q_2 (\alpha - 1 - q_2)}{2} \| x(t) - x^* \|^2 \\
    = \ & (q_1 - q_2) \Bigg( -\beta t \left( \Phi_{\lambda(t)} (x(t)) - \Phi^* \right) + t \left\langle \dot x(t) + \beta \nabla \Phi_{\lambda(t)} (x(t)), x(t) - x^* \right\rangle \\
    &+ \ \frac{\alpha - 1}{2} \| x(t) - x^* \|^2 \Bigg).
\end{align*}
As we have established earlier in Theorem \ref{Th_0} the limits of $E_{q_1}(t) - E_{q_2}(t)$ and $t \left( \Phi_{\lambda(t)} (x(t)) - \Phi^* \right)$ exists (the latter is actually zero). Therefore, the limit
\[
\lim_{t \to +\infty} \left( t \left\langle \dot x(t) + \beta \nabla \Phi_{\lambda(t)} (x(t)), x(t) - x^* \right\rangle + \frac{\alpha - 1}{2} \| x(t) - x^* \|^2 \right) \text{ also exists}.
\]
Let us introduce for every $t \geq t_0$ two auxiliary functions
\[
k(t) \ = \ t \left\langle \dot x(t) + \beta \nabla \Phi_{\lambda(t)} (x(t)), x(t) - x^* \right\rangle + \frac{\alpha - 1}{2} \| x(t) - x^* \|^2
\]
and
\[
r(t) \ = \ \frac{1}{2} \| x(t) - x^* \|^2 + \beta \int_{t_0}^t \left\langle \nabla \Phi_{\lambda(s)} (x(s)), x(s) - x^* \right\rangle ds.
\]
Noticing that
\[
\dot r(t) \ = \ \langle x(t) - x^*, \dot x(t) \rangle + \beta \left\langle \nabla \Phi_{\lambda(t)} (x(t)), x(t) - x^* \right\rangle
\]
we may write for every $t \geq t_0$
\[
(\alpha - 1) r(t) + t \dot r(t) = k(t) + \beta (\alpha - 1) \int_{t_0}^t \left\langle \nabla \Phi_{\lambda(s)} (x(s)), x(s) - x^* \right\rangle ds.
\]
From the fact that $\lim_{t \to +\infty} k(t)$ exists using \eqref{3} we obtain that $ \lim_{t \to +\infty} (\alpha - 1) r(t) + t \dot r(t) $ also exists. Applying Lemma \ref{L_2} we deduce the existence of the limit $\lim_{t \to +\infty} r(t)$. Using \eqref{3} again we obtain the existence of the limits $\lim_{t \to +\infty} \| x(t) - x^* \|$ and $\lim_{t \to +\infty} t \left\langle \dot x(t) + \beta \nabla \Phi_{\lambda(t)} (x(t)), x(t) - x^* \right\rangle$.

({\em iii}) Finally, we are in position to prove \eqref{Rates} and the rest of the convergence rates. The key idea is to show that the limit 
\[
\lim_{t \to +\infty} \left( (t^2 b(t) - \beta (q + 2 - \alpha)t) \left( \Phi_{\lambda(t)} (x(t)) - \Phi^* \right) + \frac{t^2 \varepsilon(t)}{2} \| x(t) \|^2 + \frac{t^2}{2} \| \dot x(t) + \beta \nabla \Phi_{\lambda(t)} (x(t)) \|^2 \right)
\]
exists and is actually zero. Let us return to the definition of our energy functional and rewrite it as
\begin{align*}
    E_q(t) \ = \ &(t^2 b(t) - \beta (q + 2 - \alpha)t) \left( \Phi_{\lambda(t)} (x(t)) - \Phi^* \right) + \frac{t^2 \varepsilon(t)}{2} \| x(t) \|^2 \\
    &+ \ \frac{t^2}{2} \| \dot x(t) + \beta \nabla \Phi_{\lambda(t)} (x(t) \|^2 + q t \left\langle \dot x(t) + \beta \nabla \Phi_{\lambda(t)} (x(t)), x(t) - x^* \right\rangle + \frac{q (\alpha - 1)}{2} \| x(t) - x^* \|^2.
\end{align*}
Since the limits 
\[
\lim_{t \to +\infty} E_q(t) \text{ and } \lim_{t \to +\infty} \left( q t \left\langle \dot x(t) + \beta \nabla \Phi_{\lambda(t)} (x(t), x(t) - x^* \right\rangle + \frac{q (\alpha - 1)}{2} \| x(t) - x^* \|^2 \right) \text{ exist,}
\]
it follows that
\[
\lim_{t \to +\infty} \left( (t^2 b(t) - \beta (q + 2 - \alpha)t) \left( \Phi_{\lambda(t)} (x(t)) - \Phi^* \right) + \frac{t^2 \varepsilon(t)}{2} \| x(t) \|^2 + \frac{t^2}{2} \| \dot x(t) + \beta \nabla \Phi_{\lambda(t)} (x(t)) \|^2 \right)
\]
exists as well. Denote 
\[
\psi(t) = (t^2 b(t) - \beta (q + 2 - \alpha)t) \left( \Phi_{\lambda(t)} (x(t)) - \Phi^* \right) + \frac{t^2 \varepsilon(t)}{2} \| x(t) \|^2 + \frac{t^2}{2} \| \dot x(t) + \beta \nabla \Phi_{\lambda(t)} (x(t)) \|^2
\] 
and consider
\begin{equation}\label{4}
    0 \ \leq \ \frac{\psi(t)}{t} \ \leq \ 2t b(t) \left( \Phi_{\lambda(t)} (x(t)) - \Phi^* \right) + \frac{t \varepsilon(t)}{2} \| x(t) \|^2 + \frac{t}{2} \| \dot x(t) + \beta \nabla \Phi_{\lambda(t)} (x(t)) \|^2.
\end{equation}
Let us show that the right hand side of \eqref{4} is integrable. Indeed, the first term is integrable by Theorem \ref{Th_0}. As we have also established in Theorem \ref{Th_0}, starting from $t^{**}$
\[
\Big( t b(t) - \beta \Big) \frac{\dot \lambda(t)}{2} + \beta t \left( b(t) - \frac{1}{a} \right) \ \geq \ \beta^2,
\]
where $a \geq 1$. Then, by \eqref{tbt} and $\dot \lambda(t) \geq 0$ for all $t \geq t_0$, we deduce that there exists $t_1 \geq t^{**}$ such that for all $t \geq t_1$
\[
\Big( t b(t) - \beta \Big) \frac{\dot \lambda(t)}{2} + \beta t \left( b(t) - \frac{1}{a} \right) \ \geq \ \frac{3 \beta^2}{2}
\]
or
\[
t^2 b(t) \left( \frac{\dot \lambda(t)}{2} + \beta \left( 1 - \frac{1}{a b(t)} \right) \right) \ \geq \ \left( 3 \beta + \dot \lambda(t) \right) \frac{\beta t}{2}
\]
or
\[
\left( t^2 b(t) - \beta t \right) \frac{\dot \lambda(t)}{2} - \beta^2 t + \beta t^2 \left( b(t) - \frac{1}{a} \right) \ \geq \ \frac{\beta^2 t}{2},
\]
So, by Theorem \ref{Th_0} the right hand side of \eqref{4} belongs to $L^1 \big( [t_1, +\infty), \mathbb{R} \big)$. Therefore, $\frac{\psi(t)}{t}$ also belongs to $L^1 \big( [t_1, +\infty), \mathbb{R} \big)$ and since the limit $\lim_{t \to +\infty} \psi(t)$ exists we deduce that it should be actually zero, which gives us \eqref{Rates}. To complete the proof notice that by the definition of the proximal mapping, we have
\begin{equation*}
\Phi_{\lambda(t)}(x(t)) - \Phi^* = \ \Phi(\prox\nolimits_{\lambda(t) \Phi}(x(t))) - \Phi^* + \frac{1}{2\lambda(t)} \| \prox\nolimits_{\lambda(t) \Phi} (x(t)) - x(t) \|^2 \quad \forall t \geq t_0.
\end{equation*}
The conclusion follows immediately from \eqref{Morprox} and \eqref{Rates}.

\end{proof}

\end{section}

\begin{section}{Strong convergence of the trajectories}

In this chapter we will establish the strong convergence of the trajectories to the minimal norm element of $\argmin \Phi$. 

In order to do so, we will need to modify assumption \eqref{integrability_T} from the previous chapter:

\begin{framed}
\begin{equation}\label{integrability_T_0}
    \int_{t_0}^{+\infty} \frac{\varepsilon(t)}{t b(t)} dt \ < \ +\infty.
\end{equation}    
\end{framed}

Before moving to the main point of the section, let us prove an auxiliary result first. 

\begin{theorem}\label{Str_conv_aux_th}

Suppose that $\alpha > 3$, the function $\lambda$ is bounded for all $t \geq t_0$ and \eqref{A_6_0}, \eqref{A_9}, \eqref{tbt} and \eqref{integrability_T_0} hold. Then
\[
\lim_{t \to +\infty} \| \prox\nolimits_{\lambda(t) \Phi} (x(t)) - x(t) \| \ = \ 0
\]
and
\[
\lim_{t \to +\infty} \Phi \left( \prox\nolimits_{\lambda(t) \Phi} (x(t)) \right) - \Phi^* \ = \ 0.
\]
\end{theorem}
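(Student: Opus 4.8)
The plan is to notice that both stated limits follow from the single convergence $\Phi_{\lambda(t)}(x(t))-\Phi^*\to 0$, and to extract this by re-running the Lyapunov estimate behind Theorem~\ref{Th_0} with the energy $E_{\alpha-1}$ \emph{normalized} by $t^2b(t)$, so that the new integrability hypothesis \eqref{integrability_T_0} can be exploited. For the reduction, recall that by the definition of the proximal operator, for every $t\ge t_0$,
\[
\Phi_{\lambda(t)}(x(t))-\Phi^* \;=\; \Big(\Phi\big(\prox\nolimits_{\lambda(t)\Phi}(x(t))\big)-\Phi^*\Big)\;+\;\frac{1}{2\lambda(t)}\big\|\prox\nolimits_{\lambda(t)\Phi}(x(t))-x(t)\big\|^2,
\]
and both summands on the right are nonnegative. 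Since $\lambda$ is bounded, $\Phi_{\lambda(t)}(x(t))-\Phi^*\to 0$ gives at once $\Phi(\prox\nolimits_{\lambda(t)\Phi}(x(t)))-\Phi^*\to 0$ and, via $\|\prox\nolimits_{\lambda(t)\Phi}(x(t))-x(t)\|^2\le 2\lambda(t)\big(\Phi_{\lambda(t)}(x(t))-\Phi^*\big)$, also $\|\prox\nolimits_{\lambda(t)\Phi}(x(t))-x(t)\|\to 0$; so it suffices to prove $\Phi_{\lambda(t)}(x(t))-\Phi^*\to 0$.

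To this end I would first observe that the derivation of \eqref{Energy_FF_0}, namely $\dot E_{\alpha-1}(t)\le\frac{(\alpha-1)t\varepsilon(t)}{2}\|x^*\|^2$ for $t\ge t^{**}$, used only \eqref{A_6_0}, \eqref{A_9} and \eqref{tbt}, and \emph{not} \eqref{integrability_T}, hence remains valid here. Integrating it on $[t^{**},t]$ gives
\[
0\;\le\;E_{\alpha-1}(t)\;\le\;E_{\alpha-1}(t^{**})+\frac{(\alpha-1)\|x^*\|^2}{2}\int_{t^{**}}^{t}s\varepsilon(s)\,ds,
\]
the lower bound holding because in \eqref{Energy_0} with $q=\alpha-1$ the term $\frac{q(\alpha-1-q)}{2}\|x(t)-x^*\|^2$ vanishes, $tb(t)\ge\beta$ by \eqref{tbt}, and every remaining summand is nonnegative. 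Dividing the same expansion by $t^2b(t)$ yields
\[
0\;\le\;\frac{E_{\alpha-1}(t)}{t^2b(t)}\;=\;\Big(1-\frac{\beta}{tb(t)}\Big)\big(\Phi_{\lambda(t)}(x(t))-\Phi^*\big)+\frac{\varepsilon(t)}{2b(t)}\|x(t)\|^2+\frac{1}{2t^2b(t)}\big\|(\alpha-1)(x(t)-x^*)+t\big(\dot x(t)+\beta\nabla\Phi_{\lambda(t)}(x(t))\big)\big\|^2,
\]
again a sum of three nonnegative terms.

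The key step is the asymptotic estimate $\int_{t^{**}}^{t}s\varepsilon(s)\,ds=o\big(t^2b(t)\big)$ as $t\to+\infty$, which is exactly what \eqref{integrability_T_0} is tailored for. Since $s\mapsto s^2$ and $s\mapsto b(s)$ are nondecreasing, $s^2b(s)\le t^2b(t)$ whenever $t^{**}\le s\le t$, so $s\varepsilon(s)=s^2b(s)\cdot\frac{\varepsilon(s)}{sb(s)}\le t^2b(t)\cdot\frac{\varepsilon(s)}{sb(s)}$. Given $\eta>0$, pick $T\ge t^{**}$ with $\int_T^{+\infty}\frac{\varepsilon(s)}{sb(s)}\,ds<\eta$; splitting $\int_{t^{**}}^{t}=\int_{t^{**}}^{T}+\int_T^{t}$ and bounding the second integral as above gives $\frac{1}{t^2b(t)}\int_{t^{**}}^{t}s\varepsilon(s)\,ds\le\frac{1}{t^2b(t)}\int_{t^{**}}^{T}s\varepsilon(s)\,ds+\eta$, and since $t^2b(t)\to+\infty$ and $\eta$ is arbitrary the estimate follows. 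Combined with the previous display this forces $\frac{E_{\alpha-1}(t)}{t^2b(t)}\to 0$, hence each of its three nonnegative summands tends to $0$; as $1-\frac{\beta}{tb(t)}\to 1$ we conclude $\Phi_{\lambda(t)}(x(t))-\Phi^*\to 0$, and the reduction above completes the proof.

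The only genuinely delicate point is this last asymptotic estimate: after dividing the Lyapunov function by $t^2b(t)$, the term $\int s\varepsilon(s)\,ds$ produced by the Tikhonov forcing becomes negligible, which rests on the monotonicity of $t\mapsto t^2b(t)$ together with \eqref{integrability_T_0}. One also has to work with $E_{\alpha-1}$ rather than $E_q$ for $q<\alpha-1$, since only for $q=\alpha-1$ does the coefficient of $\Phi_{\lambda(t)}(x(t))-\Phi^*$ in $\dot E_q$ turn nonpositive (after invoking \eqref{A_6_0}), so that \eqref{Energy_FF_0} is available with no a priori control on the rate of decay of the function values.
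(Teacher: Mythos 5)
Your proposal is correct and follows essentially the same route as the paper: integrate the differential inequality \eqref{Energy_FF_0} for $E_{\alpha-1}$ (noting, as the paper implicitly does, that its derivation does not use \eqref{integrability_T}), divide by $t^2 b(t)$, kill the Tikhonov forcing term via \eqref{integrability_T_0} and the monotonicity of $t\mapsto t^2 b(t)$, and finish with the proximal decomposition and the boundedness of $\lambda$. The only cosmetic difference is that you prove the averaging estimate $\frac{1}{t^2b(t)}\int_T^t s\varepsilon(s)\,ds\to 0$ by hand, whereas the paper invokes its Lemma~\ref{L_3} with $g(s)=s^2b(s)$ and $f(s)=\varepsilon(s)/(sb(s))$.
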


\begin{proof}

Let us return to \eqref{Energy_FF_0}:
\[
\dot E_{\alpha - 1}(t) \ \leq \ (\alpha - 1) t \frac{\varepsilon(t)}{2} \| x^* \|^2.
\]
Let us integrate the last inequality on [T, t]
\[
E_{\alpha - 1}(t) \ \leq \ E_{\alpha - 1}(T) + \frac{(\alpha - 1) \| x^* \|^2}{2} \int_T^t s \varepsilon(s) ds.
\]
On the other hand, for every $t \geq t_0$
\[
E_{\alpha - 1}(t) \ \geq \ (t^2 b(t) - \beta t) \left( \Phi_{\lambda(t)} (x(t)) - \Phi^* \right).
\]
Thus,
\[
\Phi_{\lambda (t)}(x(t)) - \Phi^* \ \leq \ \frac{E_{\alpha - 1}(T)}{t^2 b(t) - \beta t} + \frac{(\alpha - 1) \| x^* \|^2}{2(t^2 b(t) - \beta t)} \int_T^t s \varepsilon(s) ds.
\]
We deduce due to \eqref{integrability_T_0} and the Lemma \ref{L_3} that
\[
\lim_{t \to +\infty} \frac{1}{t^2 b(t)} \int_T^t s^2 b(s) \frac{\varepsilon(s)}{s b(s)} ds \ = \ 0.
\]
Therefore,
\[
\lim_{t \to +\infty} \frac{(\alpha - 1) \| x^* \|^2}{2(t^2 b(t) - \beta t)} \int_T^t s \varepsilon(s) ds \ = \ 0
\]
and clearly
\[
\lim_{t \to +\infty} \frac{E_{\alpha - 1}(T)}{t^2 b(t) - \beta t} \ = \ 0.
\]
Thus, we establish 
\[
\lim_{t \to +\infty} \Phi_{\lambda(t)} (x(t)) - \Phi^* \ = \ 0.
\]
By the definition of the proximal mapping
\[
\Phi_{\lambda(t)}(x(t)) - \Phi^* = \ \Phi \left( \prox\nolimits_{\lambda(t) \Phi}(x(t)) \right) - \Phi^* + \frac{1}{2\lambda(t)} \| \prox\nolimits_{\lambda(t) \Phi} (x(t)) - x(t) \|^2 \quad \forall t \geq t_0.
\]
Using the fact that $\lambda$ is bounded for all $t \geq t_0$ we deduce
\[
\lim_{t \to +\infty} \| \prox\nolimits_{\lambda(t) \Phi} (x(t)) - x(t) \| \ = \ 0
\]
and
\[
\lim_{t \to +\infty} \Phi \left( \prox\nolimits_{\lambda(t) \Phi} (x(t)) \right) - \Phi^* \ = \ 0.
\]

\end{proof}

For the remaining part of this section we will use a different energy functional. Inspired by \cite{BCL} we introduce the following functional, which we will heavily rely on throughout this section
\begin{equation}\label{Energy}
    E_{p, q}(t) = t^{p+1} \left( t b(t) + \beta (\alpha - p - q - 2) \right) \left( \Phi_{\lambda(t)} (x(t)) - \Phi^* \right) + \frac{\varepsilon(t) t^{p+2}}{2} \left( \| x(t) \|^2 - \| x^* \|^2 \right) + \frac{t^p}{2} \left \| v(t) \right \|^2,
\end{equation}
where $v(t) = q (x(t) - x^*) + t (\dot x(t) + \beta \nabla \Phi_{\lambda(t)} (x(t)) $ and $p, \ q \geq 0$.

The proof of the following theorem draws inspiration from \cite{AC_1, ACR_0, BCL}.

\begin{theorem}\label{Str_conv}

Suppose that $\lambda$ is bounded for all $t \geq t_0$, $\alpha > 3$, $b(t_0) \geq \frac{1}{2} + \frac{\beta}{t_0}$ and \eqref{A_6_0}, \eqref{A_9} and \eqref{integrability_T_0} are fulfilled. Suppose additionally that for all $t \geq t_0$
\begin{equation}\label{A_10}
    \left( \frac{\alpha}{3} - 1 \right) t b(t) - t^2 \dot b(t) + \frac{\alpha \beta}{3} \ \geq \ 0
\end{equation}
and moreover that for all $t \geq t_0$
\begin{equation}\label{A_2}
    2 \alpha (\alpha - 3) - 9 t^2 \varepsilon(t) + 6 \alpha \beta \ \leq \ 0,
\end{equation}
\begin{equation}\label{A_3}
    18 \beta t + 9 \beta \dot \lambda(t) - 9 t b(t) \left( \dot \lambda(t) + 2 \beta \right) + 3(\alpha + 3) \beta^2 + \alpha^2 \beta  \ \leq \ 0
\end{equation}
and
\begin{equation}\label{A_5}
    \lim_{t \to +\infty} \frac{\beta}{t^{\frac{\alpha}{3} + 1} \varepsilon(t)} \int_{t_0}^t s^{\frac{\alpha}{3} + 1} \varepsilon^2(s) ds \ = \ 0.
\end{equation}
If $x : [t_0,+\infty) \mapsto H$ is a solution to \eqref{Syst} and the trajectory $x(t)$ stays either inside or outside the ball $B(0, \| x^* \|)$, then $x(t)$ converges to minimal norm solution $x^* = \proj_{\argmin \Phi}(0)$, as $t \to +\infty$. Otherwise, $\liminf_{t \to +\infty} \| x(t) - x^* \| = 0$.

\end{theorem}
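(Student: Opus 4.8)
The plan is to run a Lyapunov analysis with the functional \eqref{Energy} for the specific choice $p=\frac{\alpha}{3}-1$ and $q=\frac{2\alpha}{3}$. Since $\alpha>3$, these satisfy $p,q>0$, $q\in(2,\alpha-1)$ and $\alpha-p-q-2=-1$, so the leading coefficient of $E_{p,q}$ equals $t^{p+1}\big(tb(t)-\beta\big)$, and the exponents $\tfrac{\alpha}{3}=p+1$ and $\tfrac{\alpha}{3}+1=p+2$ are exactly those occurring in \eqref{A_10}, \eqref{A_2}, \eqref{A_3} and \eqref{A_5}. The hypothesis $b(t_0)\ge\tfrac12+\tfrac{\beta}{t_0}$ gives $tb(t)\ge t_0b(t_0)\ge\tfrac{t_0}{2}+\beta>\beta$ and $b(t)-\tfrac{\beta}{t}>0$ for all $t\ge t_0$, so $E_{p,q}$ has a positive leading term and, in particular, \eqref{tbt} holds; hence Theorem~\ref{Str_conv_aux_th} applies and already yields $\Phi_{\lambda(t)}(x(t))\to\Phi^*$, $\|\prox\nolimits_{\lambda(t)\Phi}(x(t))-x(t)\|\to0$, and therefore, $\lambda$ being bounded below, $\nabla\Phi_{\lambda(t)}(x(t))=\tfrac{1}{\lambda(t)}\big(x(t)-\prox\nolimits_{\lambda(t)\Phi}(x(t))\big)\to0$ as $t\to+\infty$.

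Next I would differentiate $E_{p,q}$ along a solution of \eqref{Syst}, using \eqref{Moreau_lambda_deriv} for the $\lambda$-dependence of $\Phi_{\lambda(t)}(x(t))$ and substituting \eqref{Syst} into $\langle v(t),\dot v(t)\rangle$ exactly as in the proof of Theorem~\ref{Th_0}. The shape of the Tikhonov block of \eqref{Energy} makes the two terms $\pm\,\varepsilon(t)t^{p+2}\langle x(t),\dot x(t)\rangle$ — one from $\tfrac{d}{dt}\big[\tfrac{\varepsilon(t)t^{p+2}}{2}\|x(t)\|^2\big]$, the other from $t^{p}\langle v(t),\dot v(t)\rangle$ — cancel. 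The cross term $-qt\big\langle(b(t)-\tfrac{\beta}{t})\nabla\Phi_{\lambda(t)}(x(t))+\varepsilon(t)x(t),\,x(t)-x^*\big\rangle$ is then estimated from above by the $\varepsilon(t)$-strong convexity of $\varphi_t(x)=(b(t)-\tfrac{\beta}{t})\Phi_{\lambda(t)}(x)+\tfrac{\varepsilon(t)}{2}\|x\|^2$ anchored at $x^*$ (using $\Phi_{\lambda(t)}(x^*)=\Phi^*$), producing $-qt(b(t)-\tfrac{\beta}{t})(\Phi_{\lambda(t)}(x(t))-\Phi^*)$, $-\tfrac{qt\varepsilon(t)}{2}\|x(t)-x^*\|^2$ and $-\tfrac{qt\varepsilon(t)}{2}(\|x(t)\|^2-\|x^*\|^2)$, while the remaining term $-\beta t^{p+2}\varepsilon(t)\langle x(t),\nabla\Phi_{\lambda(t)}(x(t))\rangle$ is split by Young's inequality with weight $\sim a\beta\varepsilon(t)$ as in Theorem~\ref{Th_0}; here \eqref{FOOC} and \eqref{T_0} (i.e.\ $\|x_{\varepsilon(t),\lambda(t)}\|\le\|x^*\|$) are used to keep the residual terms under control.

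Collecting coefficients, the bracket in front of $(\Phi_{\lambda(t)}(x(t))-\Phi^*)$ is non-positive for large $t$ by \eqref{A_6_0} and \eqref{A_10}, that of $t^{p}\|\dot x(t)\|^2$ is $q+1-\alpha=1-\tfrac{\alpha}{3}<0$, that of $\|\nabla\Phi_{\lambda(t)}(x(t))\|^2$ is non-positive for large $t$ by $b(t_0)\ge\tfrac12+\tfrac{\beta}{t_0}$ and $\dot\lambda\ge0$ (as in Theorem~\ref{Th_0}), and that of $\|x(t)\|^2$ — now carrying also the $\dot\varepsilon$ and $\varepsilon^2$ contributions — is non-positive for large $t$ by \eqref{A_2}, \eqref{A_9} and \eqref{A_3}. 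This leaves a differential inequality of the form
\[
\dot E_{p,q}(t)\ \le\ -\,\frac{q}{2}\,t^{p+1}\varepsilon(t)\big\langle x(t)-x^*,\,x(t)\big\rangle\;+\;C\beta\,t^{p+1}\varepsilon^2(t)\,\|x^*\|^2\qquad(t\ge t_1)
\]
for some $C>0$ and $t_1\ge t_0$, in which $\langle x(t)-x^*,x(t)\rangle=\tfrac12\|x(t)-x^*\|^2+\tfrac12(\|x(t)\|^2-\|x^*\|^2)$ ties the sign of the leading term to the position of $x(t)$ relative to the sphere $S(0,\|x^*\|)$. If $x(t)$ eventually stays inside $\bar{B}(0,\|x^*\|)$ or eventually stays in its complement, I would exploit the corresponding sign of $\|x(t)\|^2-\|x^*\|^2$: on the ``outside'' branch one bounds $\|x(t)\|^2\le\tfrac{2E_{p,q}(t)}{\varepsilon(t)t^{p+2}}+\|x^*\|^2$ (the Tikhonov block of $E_{p,q}$ being then non-negative), which turns the inequality into a Grönwall one whose integrating factor $\exp\!\big(-2C\beta\int_{t_1}^{t}\tfrac{\varepsilon(s)}{s}\,ds\big)$ has bounded exponent because $\int^{+\infty}\tfrac{\varepsilon}{s}<+\infty$ follows from \eqref{integrability_T_0} and $b\ge b(t_0)>0$; integrating and using \eqref{A_5} to dominate the driving term shows $E_{p,q}$ is bounded, and then, by an argument in the spirit of \cite{AC_1, ACR_0, BCL} (and using the $\liminf$ statement below), one concludes $x(t)\to x^*$ — the ``inside'' branch, where $x$ is bounded outright, being symmetric. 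If instead $\|x(t)\|=\|x^*\|$ along some sequence $t_n\to+\infty$, then from the $\varepsilon(t)$-strong monotonicity of $\nabla\varphi_{\varepsilon(t),\lambda(t)}$, \eqref{FOOC}, \eqref{T_0}, \eqref{T_1}, the equality $\|x(t_n)\|=\|x^*\|$, and the decay of $\|\nabla\Phi_{\lambda(t)}(x(t))\|$ afforded by Theorem~\ref{Str_conv_aux_th}, one deduces $\|x(t_n)-x_{\varepsilon(t_n),\lambda(t_n)}\|\to0$, hence $\|x(t_n)-x^*\|\to0$ and $\liminf_{t\to+\infty}\|x(t)-x^*\|=0$.

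I expect the two genuinely delicate points to be: (i) the coefficient bookkeeping in the third step — verifying that all four brackets are simultaneously non-positive for large $t$ under \eqref{A_6_0}, \eqref{A_9}, \eqref{A_10}, \eqref{A_2} and \eqref{A_3} for the precise pair $(p,q)=(\tfrac{\alpha}{3}-1,\tfrac{2\alpha}{3})$; and (ii) the final extraction in the one-sided case, where one must calibrate the Grönwall estimate so that exactly \eqref{A_5} and \eqref{integrability_T_0} yield boundedness of $E_{p,q}$ and then upgrade $\liminf\|x(t)-x^*\|=0$ to a genuine limit without recourse to \eqref{integrability_T} (which is no longer assumed). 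The sphere-crossing estimate, by contrast, should be a short computation once $\nabla\Phi_{\lambda(t)}(x(t))\to0$ and the Tikhonov facts \eqref{T_0}--\eqref{T_1} are in hand.
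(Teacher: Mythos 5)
Your treatment of the case where the trajectory stays outside $B(0,\|x^*\|)$ is essentially the paper's argument: the same functional \eqref{Energy} with $(p,q)=(\tfrac{\alpha}{3}-1,\tfrac{2\alpha}{3})$ (so that $q+1-\alpha+p=0$ kills the $\langle\dot x,x-x^*\rangle$ term and $\alpha-p-q-2=-1$), the same strong-convexity and Young estimates, and the same assignment of \eqref{A_10}, \eqref{A_2}, \eqref{A_3}, \eqref{A_9} to the four brackets, followed by the lower bound $E_{p,q}(t)\ \geq\ \tfrac{t^{p+2}\varepsilon(t)}{2}\big(\|x(t)-x_{\varepsilon(t),\lambda(t)}\|^2+\|x_{\varepsilon(t),\lambda(t)}\|^2-\|x^*\|^2\big)$ and the use of \eqref{A_5}, \eqref{T_1} and $t^{\frac{\alpha}{3}+1}\varepsilon(t)\to+\infty$ (from \eqref{A_2}). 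Two small remarks there: the Gr\"onwall detour is unnecessary, since on the outside branch $\langle x(t)-x^*,x(t)\rangle\geq\|x(t)\|\,(\|x(t)\|-\|x^*\|)\geq 0$ and the retained term can simply be dropped; and the residual driving term should be of order $t^{p+2}\varepsilon^2(t)=t^{\frac{\alpha}{3}+1}\varepsilon^2(t)$ (not $t^{p+1}\varepsilon^2(t)$), which is exactly what \eqref{A_5} is calibrated to absorb.

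The genuine gaps are in the other two cases. The ``inside'' branch is \emph{not} symmetric: there $\|x(t)\|^2-\|x^*\|^2<0$, while the total coefficient multiplying this quantity in $\tfrac{d}{dt}E_{p,q}$ is non-positive (by $p+2-q\leq 0$ and \eqref{A_9}), so their product is non-negative and of size comparable to $t^{p+1}\varepsilon(t)\|x^*\|^2\gtrsim t^{\frac{\alpha}{3}-1}$, which is not integrable; the energy estimate therefore does not close, and no boundedness of $E_{p,q}$ follows. The paper handles this case by a completely different mechanism: Theorem~\ref{Str_conv_aux_th} gives $\Phi(\prox_{\lambda(t)\Phi}(x(t)))\to\Phi^*$ and $\|\prox_{\lambda(t)\Phi}(x(t))-x(t)\|\to 0$, every weak sequential cluster point of $x(t)$ is then a minimizer of norm at most $\|x^*\|$, hence equals $x^*$ by uniqueness of the minimal-norm element, and $\|x(t)\|<\|x^*\|$ forces $\|x(t)\|\to\|x^*\|$, upgrading weak to strong convergence. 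You need to supply this argument; your proposal omits it. Likewise, your treatment of the sphere-crossing case does not work as stated: strong monotonicity of $\nabla\varphi_{\varepsilon(t),\lambda(t)}$ only yields $\|x(t_n)-x_{\varepsilon(t_n),\lambda(t_n)}\|\leq\varepsilon(t_n)^{-1}\|\nabla\Phi_{\lambda(t_n)}(x(t_n))+\varepsilon(t_n)x(t_n)\|$, and neither $\|\nabla\Phi_{\lambda(t_n)}(x(t_n))\|=o(\varepsilon(t_n))$ nor smallness of $\|x(t_n)\|$ is available, so $\|x(t_n)-x_{\varepsilon(t_n),\lambda(t_n)}\|\to 0$ does not follow. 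The correct route is again the weak cluster point argument along the crossing times $t_n$ with $\|x(t_n)\|=\|x^*\|$, which gives $x(t_n)\rightharpoonup x^*$ and, combined with $\|x(t_n)\|\to\|x^*\|$, strong convergence along that sequence and hence $\liminf_{t\to+\infty}\|x(t)-x^*\|=0$.
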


\begin{proof}

As in \cite{BCL} we will consider several cases with respect to the trajectory $x$ staying either inside or outside the ball $B \left( 0, \| x^* \| \right)$.

\begin{center}
    Case I.
\end{center}

Assume that the trajectory $x$ stays in the complement of the ball $B$ for all $t \geq t_0$. This means nothing but $\| x(t) \| \geq \| x^* \|$ for every $t \geq t_0$. 

{(\em i)} Our nearest goal is to obtain the upper bound for the derivative of $E_{p, q}$. In order to do so, let us evaluate its time derivative for every $t \geq t_0$ first.
\begin{equation}\label{2}
\begin{split}
    \frac{d}{dt} E_{p, q}(t) \ = \ &t^p \left( (p + 2) t b(t) + t^2 \dot b(t) + (p + 1) \beta (\alpha - p - q - 2) \right) \left( \Phi_{\lambda(t)} (x(t)) - \Phi^* \right) \\
    &+ \ t^{p+1} \left( t b(t) + \beta (\alpha - p - q - 2) \right) \left( \left\langle \nabla \Phi_{\lambda(t)} (x(t)), \dot x(t) \right\rangle - \frac{\dot \lambda(t)}{2} \| \nabla \Phi_{\lambda(t)} (x(t)) \|^2 \right) \\
    &+ \ \frac{(p+2) t^{p+1} \varepsilon(t) + t^{p+2} \dot \varepsilon(t)}{2} \left( \| x(t) \|^2 - \| x^* \|^2 \right) + t^{p+2} \varepsilon(t) \langle \dot x(t), x(t) \rangle \\
    &+ \ \frac{p t^{p-1}}{2} \| q(x(t) - x^*) + t ( \dot x(t) + \beta \nabla \Phi_{\lambda(t)} (x(t)) \|^2 + t^p \langle \dot v(t), v(t) \rangle.
\end{split}
\end{equation}
Consider for every $t \geq t_0$ the inner product $\langle \dot v(t), v(t) \rangle$:
\begin{align*}
    &\Bigg\langle (q + 1) \dot x(t) + \beta \nabla \Phi_{\lambda(t)} (x(t)) + t \left( \ddot x(t) + \beta \frac{d}{dt} \nabla \Phi_{\lambda(t)} (x(t)) \right), q(x(t) - x^*) + t ( \dot x(t) \\
    &+ \ \beta \nabla \Phi_{\lambda(t)} (x(t)) \Bigg\rangle \\
    = \ &\Bigg\langle (q + 1 - \alpha) \dot x(t) + \beta \nabla \Phi_{\lambda(t)} (x(t)) - t \Big( b(t) \nabla \Phi_{\lambda(t)} (x(t)) + \varepsilon(t) x(t) \Big), q(x(t) - x^*) + t ( \dot x(t) \\
    &+ \ \beta \nabla \Phi_{\lambda(t)} (x(t)) \Bigg\rangle \\
    = \ &q (q + 1 - \alpha) \langle \dot x(t), x(t) - x^* \rangle + (q + 1 - \alpha) t \left( \| \dot x(t) \|^2 + \left\langle \beta \nabla \Phi_{\lambda(t)} (x(t)),\dot x(t) \right\rangle \right) \\
    &+ \ \beta q \langle \nabla \Phi_{\lambda(t)} (x(t)), x(t) - x^* \rangle + \beta t \langle \nabla \Phi_{\lambda(t)} (x(t)), \dot x(t) \rangle + \beta^2 t \| \nabla \Phi_{\lambda(t)} (x(t)) \|^2 \\
    &- \ q t \left\langle b(t) \nabla \Phi_{\lambda(t)} (x(t)) + \varepsilon(t) x(t), x(t) - x^* \right\rangle - t^2 \left\langle b(t) \nabla \Phi_{\lambda(t)} (x(t)) + \varepsilon(t) x(t), \dot x(t) \right\rangle \\
    &- \ \beta t^2 \big\langle b(t) \nabla \Phi_{\lambda(t)} (x(t)) + \varepsilon(t) x(t), \nabla \Phi_{\lambda(t)} (x(t)) \big\rangle,
\end{align*}
where above we used \eqref{Syst}. Consider now for every $t \geq t_0$,
\begin{align*}
    &\| q(x(t) - x^*) + t ( \dot x(t) + \beta \nabla \Phi_{\lambda(t)} (x(t)) ) \|^2 \ = \ q^2 \| x(t) - x^* \|^2 + 2q t \langle \dot x(t), x(t) - x^* \rangle \\
    &+ \ 2q \beta t \langle \nabla \Phi_{\lambda(t)} (x(t)), x(t) - x^* \rangle + t^2 \| \dot x(t) \|^2 + 2 \beta t^2 \langle \nabla \Phi_{\lambda(t)} (x(t)), \dot x(t) \rangle + \beta^2 t^2 \| \nabla \Phi_{\lambda(t)} (x(t)) \|^2.
\end{align*}
The two estimates that we made above lead to \eqref{2} becoming
\begin{align*}
    \frac{d}{dt} E_{p, q}(t) \ = \ &t^p \left( (p + 2) t b(t) + t^2 \dot b(t) + (p + 1) \beta (\alpha - p - q - 2) \right) \left( \Phi_{\lambda(t)} (x(t)) - \Phi^* \right) \\
    &+ \ \frac{(p + 2) t^{p+1} \varepsilon(t) + t^{p+2} \dot \varepsilon(t)}{2} \left( \| x(t) \|^2 - \| x^* \|^2 \right) + \frac{p q^2 t^{p-1}}{2} \| x(t) - x^* \|^2 \\
    &+ \ \frac{(p + 2) \beta^2 t^{p+1}}{2} \| \nabla \Phi_{\lambda(t)} (x(t)) \|^2 + \left( q + 1 - \alpha + \frac{p}{2} \right) t^{p+1} \| \dot x(t) \|^2 \\
    &+ \ q (q + 1 - \alpha + p) t^p \big\langle \dot x(t), x(t) - x^* \big\rangle + q \beta (p + 1) t^p \big\langle \nabla \Phi_{\lambda(t)} (x(t)), x(t) - x^* \big\rangle \\
    &- \ q t^{p+1} \Big\langle b(t) \nabla \Phi_{\lambda(t)} (x(t)) + \varepsilon(t) x(t), x(t) - x^* \Big\rangle \\
    &- \ \beta t^{p+2} \Big\langle b(t) \nabla \Phi_{\lambda(t)} (x(t)) + \varepsilon(t) x(t), \nabla \Phi_{\lambda(t)} (x(t)) \Big\rangle \\
    &- \ \frac{\dot \lambda(t) t^{p+1} \left( t b(t) + \beta (\alpha - p - q - 2) \right)}{2} \| \nabla \Phi_{\lambda(t)} (x(t)) \|^2.
\end{align*}
Let us apply the gradient inequality to the strongly convex function $x \mapsto b(t) \Phi_{\lambda(t)} (x) + \frac{\varepsilon(t) \| x \|^2}{2} $:
\begin{align*}
    &-\Big\langle b(t) \nabla \Phi_{\lambda(t)} (x(t)) + \varepsilon(t) x(t), x(t) - x^* \Big\rangle + \frac{\varepsilon(t) \| x(t) - x^* \|^2}{2} \\
    \leq \ &\left( b(t) \Phi^* + \frac{\varepsilon(t) \| x^* \|^2}{2} \right) - \left( b(t) \Phi_{\lambda(t)} (x(t)) + \frac{\varepsilon(t) \| x(t) \|^2}{2} \right)
\end{align*}
and thus
\begin{align*}
    &-q t^{p+1} \Big\langle b(t) \nabla \Phi_{\lambda(t)} (x(t)) + \varepsilon(t) x(t), x(t) - x^* \Big\rangle \ \leq \ -q t^{p+1} b(t) \left( \Phi_{\lambda(t)} (x(t)) - \Phi^* \right) \\
    &- q t^{p+1}\frac{\varepsilon(t)}{2} \left( \| x(t) \|^2 - \| x^* \|^2 \right) - q t^{p+1} \frac{\varepsilon(t) \| x(t)-x^* \|^2}{2}
\end{align*}
for every $t \geq t_0$. So, noticing that
\begin{align*}
    &-\beta t^{p+2} \Big\langle b(t) \nabla \Phi_{\lambda(t)} (x(t)) + \varepsilon(t) x(t), \nabla \Phi_{\lambda(t)} (x(t)) \Big\rangle \\
    = \ &-\beta t^{p+2} b(t) \| \nabla \Phi_{\lambda(t)} (x(t)) \|^2 - \beta t^{p+2} \varepsilon(t) \Big\langle x(t), \nabla \Phi_{\lambda(t)} (x(t)) \Big\rangle
\end{align*}
we deduce
\begin{align*}
    \frac{d}{dt} E_{p, q}(t) \ \leq \ &t^p \left( (p + 2 - q) t b(t) + t^2 \dot b(t) + (p + 1) \beta (\alpha - p - q - 2) \right) \left( \Phi_{\lambda(t)} (x(t)) - \Phi^* \right) \\
    &+ \ \frac{(p + 2 - q) t^{p+1} \varepsilon(t) + t^{p+2} \dot \varepsilon(t)}{2} \left( \| x(t) \|^2 - \| x^* \|^2 \right) \\
    &+ \ \left( \frac{p q^2 t^{p-1}}{2} - \frac{q t^{p+1} \varepsilon(t)}{2} \right) \| x(t) - x^* \|^2 \\
    &+ \ \frac{(p+2) \beta^2 t^{p+1} - 2 \beta t^{p+2} b(t) - \dot \lambda(t) t^{p+1} \left( t b(t) + \beta (\alpha - p - q - 2) \right)}{2} \| \nabla \Phi_{\lambda(t)} (x(t)) \|^2 \\
    &+ \ \left( q + 1 - \alpha + \frac{p}{2} \right) t^{p+1} \| \dot x(t) \|^2 + q (q + 1 - \alpha + p) t^p \big\langle \dot x(t), x(t) - x^* \big\rangle \\
    &+ \ q \beta (p + 1) t^p \big\langle \nabla \Phi_{\lambda(t)} (x(t)), x(t) - x^* \big\rangle - \beta t^{p+2} \varepsilon(t) \Big\langle x(t), \nabla \Phi_{\lambda(t)} (x(t)) \Big\rangle.
\end{align*}
In order to proceed further we will need the following estimates:
\begin{align*}
    q \beta (p + 1) t^p \big\langle \nabla \Phi_{\lambda(t)} (x(t)), x(t) - x^* \big\rangle \ \leq \ \frac{q \beta (p + 1) t^{p+1}}{4 c^2} \| \nabla \Phi_{\lambda(t)} (x(t)) \|^2 + q \beta (p + 1) c^2 t^{p-1} \| x(t) - x^* \|^2
\end{align*}
and
\begin{align*}
    -\beta t^{p+2} \varepsilon(t) \Big\langle x(t), \nabla \Phi_{\lambda(t)} (x(t)) \Big\rangle \ \leq \ \frac{\beta t^{p+2}}{a} \| \nabla \Phi_{\lambda(t)} (x(t)) \|^2 + \frac{a \beta t^{p+2} \varepsilon^2(t)}{4} \| x(t) \|^2
\end{align*}
for every $t \geq t_0$, some $c \geq 1$ and $a \geq 1$. Thus,
\begin{align*}
    \frac{d}{dt} E_{p, q}(t) \ \leq \ &t^p \left( (p + 2 - q) t b(t) + t^2 \dot b(t) + (p + 1) \beta (\alpha - p - q - 2) \right) \left( \Phi_{\lambda(t)} (x(t)) - \Phi^* \right) \\
    &+ \ \left( \frac{(p + 2 - q) t^{p+1} \varepsilon(t) + t^{p+2} \dot \varepsilon(t)}{2} + \frac{a \beta t^{p+2} \varepsilon^2(t)}{4} \right) \| x(t) \|^2 \\
    &+ \ \left( \frac{p q^2 t^{p-1}}{2} - \frac{q t^{p+1} \varepsilon(t)}{2} + q \beta (p + 1) c^2 t^{p-1} \right) \| x(t) - x^* \|^2 \\
    &+ \ \Bigg( \frac{(p+2) \beta^2 t^{p+1} - 2 \beta t^{p+2} b(t) - \dot \lambda(t) t^{p+1} \left( t b(t) + \beta (\alpha - p - q - 2) \right)}{2} + \frac{q \beta (p + 1) t^{p+1}}{4 c^2} \\
    &+ \ \frac{\beta t^{p+2}}{a} \Bigg) \cdot \| \nabla \Phi_{\lambda(t)} (x(t)) \|^2 \\
    &+ \ \left( q + 1 - \alpha + \frac{p}{2} \right) t^{p+1} \| \dot x(t) \|^2 + q (q + 1 - \alpha + p) t^p \big\langle \dot x(t), x(t) - x^* \big\rangle \\
    &- \ \left( \frac{(p + 2 - q) t^{p+1} \varepsilon(t) + t^{p+2} \dot \varepsilon(t)}{2} \right) \| x^* \|^2.
\end{align*}
Let us fix
\[
q = \frac{2 \alpha}{3} \text{ and } p = \frac{\alpha - 3}{3}.
\]
First of all, due to this choice
\[
q + 1 - \alpha + p \ = \ 0 
\]
and thus we get rid of the term $\big\langle \dot x(t), x(t) - x^* \big\rangle$. Secondly,
\begin{equation}\label{*}
    q + 1 - \alpha + \frac{p}{2} \ = \ -\frac{p}{2} \ \leq \ 0.
\end{equation}
Then
\begin{equation}\label{**}
    p + 2 - q \ = \ 1 - \frac{\alpha}{3} \ \leq \ 0.
\end{equation}
So,
\begin{align*}
    \frac{d}{dt} E_{p, q}(t) \ \leq \ &t^p \left( (p + 2 - q) t b(t) + t^2 \dot b(t) + (p + 1) \beta (\alpha - p - q - 2) \right) \left( \Phi_{\lambda(t)} (x(t)) - \Phi^* \right) \\
    &+ \ \left( \frac{(p + 2 - q) t^{p+1} \varepsilon(t) + t^{p+2} \dot \varepsilon(t)}{2} + \frac{a \beta t^{p+2} \varepsilon^2(t)}{4} \right) \| x(t) \|^2 \\
    &+ \ \left( \frac{p q^2 t^{p-1}}{2} - \frac{q t^{p+1} \varepsilon(t)}{2} + q \beta (p + 1) c^2 t^{p-1} \right) \| x(t) - x^* \|^2 \\
    &+ \ \Bigg( \frac{(p+2) \beta^2 t^{p+1} - 2 \beta t^{p+2} b(t) - \dot \lambda(t) t^{p+1} \left( t b(t) + \beta (\alpha - p - q - 2) \right)}{2} + \frac{q \beta (p + 1) t^{p+1}}{4 c^2} \\
    &+ \ \frac{\beta t^{p+2}}{a} \Bigg) \cdot \| \nabla \Phi_{\lambda(t)} (x(t)) \|^2 \\
    &+ \ \left( q + 1 - \alpha + \frac{p}{2} \right) t^{p+1} \| \dot x(t) \|^2 - \left( \frac{(p + 2 - q) t^{p+1} \varepsilon(t) + t^{p+2} \dot \varepsilon(t)}{2} \right) \| x^* \|^2.
\end{align*}
Obviously, for $t$ large enough, say, $t \geq t_2 \geq t_0$ the following expression is non-positive due to \eqref{A_10} and $p + 1 = \frac{\alpha}{3} > 0$ and $\alpha - p - q - 2 = -1$
\[
(p + 2 - q) t b(t) + t^2 \dot b(t) + (p + 1) \beta (\alpha - p - q - 2) \ = \ \left( 1 - \frac{\alpha}{3} \right) t b(t) + t^2 \dot b(t) - \frac{\alpha \beta}{3} \ \leq \ 0.
\]
Moreover, from \eqref{A_2} it follows that for $c = 1$
\[
\frac{p q^2 t^{p-1}}{2} - \frac{q t^{p+1} \varepsilon(t)}{2} + q \beta (p + 1) c^2 t^{p-1} \ = \ \frac{\alpha t^{\frac{\alpha - 6}{3}}}{27} \left( 2 \alpha (\alpha - 3) - 9 t^2 \varepsilon(t) + 6 \alpha \beta \right) \ \leq \ 0
\]
for all $t \geq t_0$. Furthermore,
\begin{align*}
    &\left( \frac{(p + 2 - q) t^{p+1} \varepsilon(t) + t^{p+2} \dot \varepsilon(t)}{2} + \frac{a \beta t^{p+2} \varepsilon^2(t)}{4} \right) \| x(t) \|^2 - \left( \frac{(p + 2 - q) t^{p+1} \varepsilon(t) + t^{p+2} \dot \varepsilon(t)}{2} \right) \| x^* \|^2 \ = \\
    &\left( \frac{(p + 2 - q) t^{p+1} \varepsilon(t) + t^{p+2} \dot \varepsilon(t)}{2} + \frac{a \beta t^{p+2} \varepsilon^2(t)}{4} \right) \left( \| x(t) \|^2 - \| x^* \|^2 \right) + \frac{a \beta t^{p+2} \varepsilon^2(t)}{4} \| x^* \|^2.
\end{align*}
So, under the assumption \eqref{A_9} and the fact that $\| x(t) \| \geq \| x^* \|$ for all $t \geq t_0$ we deduce due to \eqref{**}
\[
\left( \frac{(p + 2 - q) t^{p+1} \varepsilon(t) + t^{p+2} \dot \varepsilon(t)}{2} + \frac{a \beta t^{p+2} \varepsilon^2(t)}{4} \right) \left( \| x(t) \|^2 - \| x^* \|^2 \right) \ \leq \ 0.
\]
Thus, under the assumptions \eqref{A_9}, \eqref{A_10}, \eqref{A_2} and \eqref{A_3} (the latest leads to the non-positivity of the coefficient of $\| \nabla \Phi_{\lambda}(x) \|^2$) we conclude due to \eqref{*} that for every $t \geq t_2$
\begin{equation}\label{Energy_FF}
    \frac{d}{dt} E_{p, q}(t) \ \leq \ \frac{a \beta t^{\frac{\alpha}{3} + 1} \varepsilon^2(t)}{4} \| x^* \|^2.
\end{equation}
{\em (ii)} Let us obtain now the lower bound for $E_{p ,q}$. Notice that for $p = \frac{\alpha - 3}{3}$ and $q = \frac{2 \alpha}{3}$ we have $\alpha - p - q = 1$ and
\begin{equation}\label{0}
\begin{split}
    E_{p, q}(t) \ &\geq \ t^{p+1} \left( t b(t) + \beta (\alpha - p - q - 2) \right) \left( \Phi_{\lambda(t)} (x(t)) - \Phi^* \right) + \frac{\varepsilon(t) t^{p+2}}{2} \left( \| x(t) \|^2 - \| x^* \|^2 \right) \\
    &= \ t^{p+1} \left( t b(t) - \beta \right) \left( \Phi_{\lambda(t)} (x(t)) - \Phi^* \right) + \frac{\varepsilon(t) t^{p+2}}{2} \left( \| x(t) \|^2 - \| x^* \|^2 \right) \\
    &\geq \ \frac{t^{p+2}}{2} \left( \Phi_{\lambda(t)} (x(t)) - \Phi^* \right) + \frac{\varepsilon(t) t^{p+2}}{2} \left( \| x(t) \|^2 - \| x^* \|^2 \right),
\end{split}
\end{equation}
since $t b(t) - \beta \geq \frac{t}{2}$ for every $t \geq t_0$ by $b(t_0) \geq \frac{1}{2} + \frac{\beta}{t_0}$ and $b$ being non-decreasing. On the other hand, applying the gradient inequality to the strongly convex function $\varphi_{\varepsilon(t), \lambda(t)}(x) \ = \ \frac{\Phi_{\lambda(t)}(x)}{2} + \frac{\varepsilon(t)}{2} \| x \|^2$ we deduce for $x_{\varepsilon(t), \lambda(t)} = \argmin_{H} \varphi_{\varepsilon(t), \lambda(t)}(x)$
\[
\varphi_{\varepsilon(t), \lambda(t)} (x) - \varphi_{\varepsilon(t), \lambda(t)} (x_{\varepsilon(t), \lambda(t)}) \ \geq \ \frac{\varepsilon(t)}{2} \| x - x_{\varepsilon(t), \lambda(t)} \|^2 \text{ for every } x \in H.
\]
By the definition of $\varphi_{\varepsilon(t), \lambda(t)} (x)$ we deduce
\begin{align*}
    \varphi_{\varepsilon(t), \lambda(t)} (x_{\varepsilon(t), \lambda(t)}) - \varphi_{\varepsilon(t), \lambda(t)} (x^*) \ &= \ \frac{1}{2} \left( \Phi_{\lambda(t)} (x_{\varepsilon(t), \lambda(t)}) - \Phi^* \right) + \frac{\varepsilon(t)}{2} \left( \| x_{\varepsilon(t), \lambda(t)} \|^2 - \| x^* \|^2 \right) \\
    &\geq \ \frac{\varepsilon(t)}{2} \left( \| x_{\varepsilon(t), \lambda(t)} \|^2 - \| x^* \|^2 \right).
\end{align*}
We may now add the last two inequalities to obtain
\begin{equation}\label{1}
    \varphi_{\varepsilon(t), \lambda(t)} (x) - \varphi_{\varepsilon(t), \lambda(t)} (x^*) \ \geq \ \frac{\varepsilon(t)}{2} \left( \| x - x_{\varepsilon(t), \lambda(t)} \|^2 + \| x_{\varepsilon(t), \lambda(t)} \|^2 - \| x^* \|^2 \right) \text{ for every } x \in H.
\end{equation}
Plugging \eqref{1} into \eqref{0} we conclude that for every $t \geq t_2$
\begin{equation}\label{LBE}
    E_{p, q}(t) \ \geq \ \frac{t^{p+2} \varepsilon(t)}{2} \left( \| x(t) - x_{\varepsilon(t), \lambda(t)} \|^2 + \| x_{\varepsilon(t), \lambda(t)} \|^2 - \| x^* \|^2 \right).
\end{equation}
{\em (iii)} Finally, using the lower and upper bounds for $E_{p, q}$ we can prove the strong convergence of the trajectories to a minimal norm solution. Integrating \eqref{Energy_FF} on $[t_2, t]$ we obtain
\[
E_{p, q}(t) \ \leq \ E_{p, q}(t_2)  + \frac{a \beta \| x^* \|^2}{4} \int_{t_2}^t s^{\frac{\alpha}{3} + 1} \varepsilon^2(s) ds
\]
and using \eqref{LBE} we deduce for every $t \geq t_2$
\[
\| x(t) - x_{\varepsilon(t), \lambda(t)} \|^2 \ \leq \ \| x^* \|^2 - \| x_{\varepsilon(t), \lambda(t)} \|^2 + \frac{2 E_{p, q} (t_2)}{t^{\frac{\alpha}{3} + 1} \varepsilon(t)} + \frac{a \beta \| x^* \|^2}{2 t^{\frac{\alpha}{3} + 1} \varepsilon(t)} \int_{t_2}^t s^{\frac{\alpha}{3} + 1} \varepsilon^2(s) ds.
\]
Note that due to \eqref{A_2}
\[
t^2 \varepsilon(t) \ \geq \ \frac{2 \alpha (\alpha - 3) + 6 \alpha \beta}{9} \ = \ \hat{C} \ \geq \ 0
\]
and
\[
t^{\frac{\alpha}{3} + 1} \varepsilon(t) \ = \ t^2 \varepsilon(t) t^{\frac{\alpha}{3} - 1} \ \geq \ \hat{C} t^{\frac{\alpha}{3} - 1}.
\]
Since $\alpha > 3$ we deduce
\[
\lim_{t \to +\infty} t^{\frac{\alpha}{3} + 1} \varepsilon(t) = +\infty
\]
and thus
\[
\lim_{t \to +\infty} \frac{2 E_{p, q} (t_2)}{t^{\frac{\alpha}{3} + 1} \varepsilon(t)} \ = \ 0.
\]
Finally, by \eqref{T_1} and \eqref{A_5} we conclude
\[
\lim_{t \to +\infty} x(t) = x^*.
\]

\begin{center}
    Case II.
\end{center}

Assume now the opposite to the first case, namely, $\| x(t) \| < \| x^* \|$ for every $t \geq t_0$. According to Theorem \ref{Str_conv_aux_th}
\[
\lim_{t \to +\infty} \| \prox\nolimits_{\lambda(t) \Phi} (x(t)) - x(t) \| \ = \ 0
\]
and
\[
\lim_{t \to +\infty} \Phi \left( \prox\nolimits_{\lambda(t) \Phi} (x(t)) \right) - \Phi^* \ = \ 0.
\]
Denote $\xi(t) = \prox\nolimits_{\lambda(t) \Phi}(x(t))$. Considering a sequence $ \{ t_k \}_{k \in \mathbb N} $ such that $ \{x(t_k)\}_{k \in \mathbb N} $ converges weakly to an element $ \hat x \in H $  as $ k \to \infty $, we notice that  $\{\xi(t_k)\}_{k \in \mathbb N} $ converges weakly to $\hat x$ as $ k \to \infty $. Now, the function $ \Phi $ being convex and lower semicontinuous in the weak topology, allows us to write
\[
\Phi(\hat x) \ \leq \ \liminf_{k \to \infty} \Phi(\xi(t_k)) \ = \ \lim_{t \to +\infty} \Phi(\xi(t)) \ = \ \Phi^* 
\]
and hence, $ \hat x \in \argmin \Phi$. The norm is weakly semicontinuous, so
\[
\| \hat x \| \ \leq \ \liminf_{k \to \infty} \| \xi(t_k) \| \ \leq \ \| x^* \|,
\]
which means that $\hat x = x^*$ by the uniqueness of the element of the minimum norm in $\argmin \Phi_{\lambda}$. Therefore, the trajectory $x$ converges weakly to $x^*$ and
\[
\| x^* \| \ \leq \ \liminf_{t \to +\infty} \| x(t) \| \ \leq \ \limsup_{t \to +\infty} \| x(t) \| \ \leq \ \| x^* \|
\]
and thus
\[
\lim_{t \to +\infty} \| x(t) \| \ = \ \| x^* \|.
\]
From this and the weak convergence of the trajectory $x$ follows the strong one: $\lim_{t \to +\infty} x(t) = x^*$.

\begin{center}
    Case III.
\end{center}
Assume that for $t \geq t_0$ the trajectory $x$ finds itself both inside and outside the ball $B(0, \| x^* \|)$. Since $x$ is continuous, there exists a sequence $\{ t_n \}_{n \in \mathbb{N}} \subseteq [t_0, +\infty)$ such that $t_n \to \infty$ as $n \to \infty$ and $\| x(t_n) \| = \| x^* \|$ for every $n \in \mathbb{N}$. Consider again a weak sequential cluster point $\hat x$ of the sequence $\{ x(t_n) \}_{n \in \mathbb{N}}$. By repeating the same argument as in the previous case we deduce the weak convergence of $\{ x(t_n) \}_{n \in \mathbb{N}}$ to $x^*$, as $n \to \infty$. Since $\| x(t_n) \| \to \| x^* \|$, as $n \to \infty$, we obtain that $\| x(t_n) - x^* \| \to 0$, as $n \to \infty$, which means $\liminf_{t \to +\infty} \| x(t) - x^* \| = 0$.

\end{proof}

\begin{remark}

In this section the condition $\dot b(t) \geq 0$ for all $t \geq t_0$ is not necessary. Our conjecture is that we can weaken the setting by omitting this condition and thus widen the range for $b$, including the functions that decay not faster than $\frac{1}{t^2}$ for the polynomial choice of parameters.

\end{remark}

\begin{remark}

There is no setting which guarantees both fast rates for the values and strong convergence of the trajectories. One of the future goal would be to develop a new approach (based on \cite{ABCR_0}), which would help us deduce these two results simultaneously.

\end{remark}

\end{section}

\begin{subsection}{Strong convergence of the trajectories in case $\alpha = 3$.}

Throughout this section we no longer require that $b$ is non-decreasing. In this case the analogue of Theorem \ref{Str_conv_aux_th} looks as follows

\begin{theorem}\label{Str_conv_aux_th_0}

Suppose that for all $t \geq t_0$ the function $\lambda$ is bounded, $b(t) \equiv b > 0$ is a constant function and \eqref{A_9} and \eqref{tbt} hold. Suppose additionally that \eqref{integrability_T_0} holds for constant $b$, namely
\begin{equation*}
    \int_{t_0}^{+\infty} \frac{\varepsilon(t)}{t} dt \ < \ +\infty.
\end{equation*}
Then
\[
\lim_{t \to +\infty} \| \prox\nolimits_{\lambda(t) \Phi} (x(t)) - x(t) \| \ = \ 0
\]
and
\[
\lim_{t \to +\infty} \Phi \left( \prox\nolimits_{\lambda(t) \Phi} (x(t)) \right) - \Phi^* \ = \ 0.
\]
\end{theorem}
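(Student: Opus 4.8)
The plan is to mimic the proof of Theorem \ref{Str_conv_aux_th} as closely as possible; the single genuinely new point is that \eqref{A_6_0} is no longer at our disposal: for $\alpha=3$, $\dot b\equiv 0$ and $\beta>0$ it reads $-\beta\ge 0$, which fails. As a consequence the time derivative of the energy functional will contain a \emph{positive} multiple of $\Phi_{\lambda(t)}(x(t))-\Phi^*$, and the whole argument will hinge on absorbing that term through an integrating-factor (Grönwall-type) estimate that exploits the integrability of $t\mapsto \dfrac{\beta}{t^2 b-\beta t}$.

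First I would specialise the energy functional \eqref{Energy_0}: with $\alpha=3$ the admissible range $2\le q\le\alpha-1$ collapses to $q=2$, so the last term of \eqref{Energy_0} vanishes and, since $b$ is constant,
\[
E_2(t)=(t^2 b-\beta t)\big(\Phi_{\lambda(t)}(x(t))-\Phi^*\big)+\frac{t^2\varepsilon(t)}{2}\|x(t)\|^2+\frac12\big\|2(x(t)-x^*)+t(\dot x(t)+\beta\nabla\Phi_{\lambda(t)}(x(t)))\big\|^2 .
\]
By \eqref{tbt} one has $t^2 b-\beta t\ge 0$ for $t\ge t_0$, so the last two summands are nonnegative and $E_2(t)\ge (t^2 b-\beta t)(\Phi_{\lambda(t)}(x(t))-\Phi^*)\ge 0$. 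Re-running the computation that led to \eqref{Energy_Pre_0} with $\alpha=3$, $q=2$, $\dot b\equiv 0$, the coefficient of $\Phi_{\lambda(t)}(x(t))-\Phi^*$ becomes $-\beta(2-\alpha)=\beta$; the coefficient of $\|x(t)\|^2$ equals $\tfrac14\big(2t^2\dot\varepsilon(t)+a\beta t^2\varepsilon^2(t)\big)\le 0$ by \eqref{A_9}; the coefficient of $\|x(t)-x^*\|^2$ is nonpositive; and, since $b>\tfrac1a$ by \eqref{tbt} and $\dot\lambda\ge0$, there is $t^{**}\ge t_0$ (the same threshold as in the proof of Theorem \ref{Th_0}) beyond which the coefficient of $\|\nabla\Phi_{\lambda(t)}(x(t))\|^2$ is nonpositive. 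Hence, for $t\ge t^{**}$,
\[
\dot E_2(t)\ \le\ \beta\big(\Phi_{\lambda(t)}(x(t))-\Phi^*\big)+t\,\varepsilon(t)\|x^*\|^2 .
\]

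Next comes the key step: using $\Phi_{\lambda(t)}(x(t))-\Phi^*\le E_2(t)/(t^2 b-\beta t)$ turns this into the linear differential inequality $\dot E_2(t)\le \tfrac{\beta}{t^2 b-\beta t}E_2(t)+t\varepsilon(t)\|x^*\|^2$, and since $\int^{+\infty}\tfrac{\beta\,dt}{t^2 b-\beta t}<+\infty$ the integrating factor $F(t)=\exp\!\big(-\int_{t^{**}}^t\tfrac{\beta\,ds}{s^2 b-\beta s}\big)$ is bounded and bounded away from $0$. Integrating $\tfrac{d}{dt}\big(F(t)E_2(t)\big)\le F(t)t\varepsilon(t)\|x^*\|^2\le t\varepsilon(t)\|x^*\|^2$ on $[T,t]$ and dividing by $F(t)$ gives $E_2(t)\le C_T+C\int_T^t s\varepsilon(s)\,ds$, hence, by the lower bound on $E_2$,
\[
\Phi_{\lambda(t)}(x(t))-\Phi^*\ \le\ \frac{C_T}{t^2 b-\beta t}+\frac{C}{t^2 b-\beta t}\int_T^t s\,\varepsilon(s)\,ds .
\]
The first term tends to $0$; for the second, Lemma \ref{L_3} together with the constant-$b$ form of \eqref{integrability_T_0} yields $\tfrac{1}{t^2 b}\int_T^t s\varepsilon(s)\,ds\to 0$, hence also $\tfrac{1}{t^2 b-\beta t}\int_T^t s\varepsilon(s)\,ds\to 0$. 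Therefore $\Phi_{\lambda(t)}(x(t))-\Phi^*\to 0$, and, exactly as at the end of the proof of Theorem \ref{Str_conv_aux_th}, the identity $\Phi_{\lambda(t)}(x(t))-\Phi^*=\Phi(\prox\nolimits_{\lambda(t)\Phi}(x(t)))-\Phi^*+\tfrac{1}{2\lambda(t)}\|\prox\nolimits_{\lambda(t)\Phi}(x(t))-x(t)\|^2$, in which both summands on the right are nonnegative, together with the boundedness of $\lambda$, forces both $\Phi(\prox\nolimits_{\lambda(t)\Phi}(x(t)))-\Phi^*\to 0$ and $\|\prox\nolimits_{\lambda(t)\Phi}(x(t))-x(t)\|\to 0$.

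The main obstacle is precisely the unavoidable positive term $\beta(\Phi_{\lambda(t)}(x(t))-\Phi^*)$ in $\dot E_2$: one must observe that it is dominated by $\beta E_2(t)/(t^2 b-\beta t)$ and that the resulting coefficient is \emph{integrable}, so the integrating factor neither blows up nor degenerates and the rest of the Theorem \ref{Str_conv_aux_th} scheme survives with only this local modification. A secondary point that needs care is verifying that all the remaining terms in $\dot E_2$ — the $\|\nabla\Phi_{\lambda(t)}(x(t))\|^2$ term in particular — are nonpositive from some $t^{**}$ on, which uses $b>\tfrac1a$ and $\dot\lambda\ge0$ exactly as in the proof of Theorem \ref{Th_0}.
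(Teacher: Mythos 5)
Your proposal is correct and follows essentially the same route as the paper: the paper also bounds $\dot E_2(t)\le\beta(\Phi_{\lambda(t)}(x(t))-\Phi^*)+t\varepsilon(t)\|x^*\|^2$, absorbs the positive term via $\Phi_{\lambda(t)}(x(t))-\Phi^*\le E_2(t)/(t^2b-\beta t)$, and applies the integrating factor $t/(bt-\beta)$ — which is exactly your $\exp\bigl(-\int\tfrac{\beta\,ds}{s^2b-\beta s}\bigr)$ up to a multiplicative constant, since $\int\tfrac{\beta\,ds}{s(sb-\beta)}=\ln\tfrac{sb-\beta}{s}$ — before invoking Lemma \ref{L_3} in the same way.
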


\begin{proof}

In this case the energy functional becomes 
\[
    E_2(t) \ = \ (b t^2 - \beta t) \left( \Phi_{\lambda(t)} (x(t)) - \Phi^* \right) + \frac{t^2 \varepsilon(t)}{2} \| x(t) \|^2 + \frac{1}{2} \| 2 (x(t) - x^*) + t (\dot x(t) + \beta \nabla \Phi_{\lambda(t)} (x(t)) \|^2.
\]
Relation \eqref{Energy_Pre} thus becomes for all $t \geq t_0$
\begin{align*}
    \dot E_2(t) \ \leq \ &\beta \left( \Phi_{\lambda(t)} (x(t)) - \Phi^* \right) - \left( \Big( b t^2 - \beta t \Big) \frac{\dot \lambda(t)}{2} - \beta^2 t + \beta t^2 \left( b - \frac{1}{a} \right) \right) \| \nabla \Phi_{\lambda(t)} (x(t)) \|^2 \\
    &+ \ \frac{2 t^2 \dot \varepsilon(t) + a \beta t^2 \varepsilon^2(t)}{4} \| x(t) \|^2 - t \varepsilon(t) \| x(t) - x^* \|^2 + t \varepsilon(t) \| x^* \|^2.
\end{align*}
Thus, repeating the same arguments as in Theorem \ref{Th_0} we obtain
\[
\dot E_2(t) \ \leq \ \beta \left( \Phi_{\lambda(t)} (x(t)) - \Phi^* \right) + t \varepsilon(t) \| x^* \|^2.
\]
Let us multiply this expression with $t (b t - \beta)$ to obtain
\[
t (b t - \beta) \dot E_2(t) \ \leq \ \beta t (b t - \beta) \left( \Phi_{\lambda(t)} (x(t)) - \Phi^* \right) + t^2 (b t - \beta) \varepsilon(t) \| x^* \|^2 \ \leq \ \beta E_2(t) + t^2 (b t - \beta) \varepsilon(t) \| x^* \|^2.
\]
Now, we will divide by $(b t - \beta)^2$ to conclude
\[
\frac{t}{(b t - \beta)} \dot E_2(t) \ \leq \ \frac{\beta}{(b t - \beta)^2} E_2(t) + \frac{t^2}{(b t - \beta)} \varepsilon(t) \| x^* \|^2
\]
or
\[
\frac{d}{dt} \left( \frac{t}{b t - \beta} E_2(t) \right) \ \leq \ \frac{t^2}{(b t - \beta)} \varepsilon(t) \| x^* \|^2.
\]
Integrating the last inequality on $[T, t]$, where $T \geq t_0$, we deduce
\[
\frac{t}{b t - \beta} E_2(t) \ \leq \ \frac{T}{b T - \beta} E_2(T) + \| x^* \|^2 \int_T^t \frac{s^2}{(b s - \beta)} \varepsilon(s) ds.
\]
By the definition of $E_2$ we know
\[
 E_2(t) \ \geq \ (b t^2 - \beta t) \left( \Phi_{\lambda(t)} (x(t)) - \Phi^* \right).
\]
Combining these two inequalities, we deduce
\[
\Phi_{\lambda(t)} (x(t)) - \Phi^* \ \leq \ \frac{T}{t^2 (b T - \beta)} E_2(T) + \frac{\| x^* \|^2}{t^2} \int_T^t \frac{s^2}{(b s - \beta)} \varepsilon(s) ds.
\]
Now, 
\[
\lim_{t \to +\infty} \frac{T}{t^2 (b T - \beta)} E_2(T) \ = \ 0.
\]
Applying Lemma \ref{L_3} we deduce due to \eqref{integrability_T_0}
\[
\lim_{t \to +\infty} \frac{b t - \beta}{t^3} \int_T^t \frac{s^3}{(b s - \beta)} \frac{\varepsilon(s)}{s} ds \ = \ 0
\]
and thus
\[
\lim_{t \to +\infty} \frac{\| x^* \|^2}{t^2} \int_T^t \frac{s^2}{(b s - \beta)} \varepsilon(s) ds \ = \ 0.
\]
Therefore, we establish 
\[
\lim_{t \to +\infty} \Phi_{\lambda(t)} (x(t)) - \Phi^* \ = \ 0.
\]
Again, by the definition of the proximal mapping
\[
\Phi_{\lambda(t)}(x(t)) - \Phi^* = \ \Phi \left( \prox\nolimits_{\lambda(t) \Phi}(x(t)) \right) - \Phi^* + \frac{1}{2\lambda(t)} \| \prox\nolimits_{\lambda(t) \Phi} (x(t)) - x(t) \|^2 \quad \forall t \geq t_0.
\]
Using the fact that $\lambda$ is bounded for all $t \geq t_0$ we deduce
\[
\lim_{t \to +\infty} \| \prox\nolimits_{\lambda(t) \Phi} (x(t)) - x(t) \| \ = \ 0
\]
and
\[
\lim_{t \to +\infty} \Phi \left( \prox\nolimits_{\lambda(t) \Phi} (x(t)) \right) - \Phi^* \ = \ 0.
\]

\end{proof}

We are in position now to formulate the analogue of Theorem \ref{Str_conv}.
\begin{theorem}\label{Str_conv_1}

Suppose that $\lambda$ is bounded for all $t \geq t_0$, $b(t) \equiv b \geq \frac{1}{2} + \frac{\beta}{t_0}$ and \eqref{A_9} and \eqref{integrability_T_0} hold. Assume, in addition, that 
\begin{equation}\label{A_2_2}
    \lim_{t \to +\infty} t^2 \varepsilon(t) \ = \ +\infty,
\end{equation}
\begin{equation}\label{A_3_3}
    2 \beta t + \beta \dot \lambda(t) - b t \left( \dot \lambda(t) + 2 \beta \right) + 2 \beta^2 + \beta \ \leq \ 0 \text{ for all } t \geq t_0
\end{equation}
and
\begin{equation}\label{A_5_5}
    \lim_{t \to +\infty} \frac{\beta}{t^2 \varepsilon(t)} \int_{t_0}^t s^2 \varepsilon^2(s) ds \ = \ 0.
\end{equation}
If $x : [t_0,+\infty) \mapsto H$ is a solution to \eqref{Syst} and the trajectory $x(t)$ stays either inside or outside the ball $B(0, \| x^* \|)$, then $x(t)$ converges to minimal norm solution $x^* = \proj_{\argmin \Phi}(0)$, as $t \to +\infty$. Otherwise, $\liminf_{t \to +\infty} \| x(t) - x^* \| = 0$.

\end{theorem}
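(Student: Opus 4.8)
The plan is to run the argument of Theorem~\ref{Str_conv} at the borderline value $\alpha=3$ with $b$ constant, for which the natural choice of the parameters in the Lyapunov functional \eqref{Energy} is $p=0$, $q=2$ (so that $\alpha-p-q-2=-1$ and $\alpha-p-q=1$), giving
\[
E_{0,2}(t)=t\bigl(tb-\beta\bigr)\bigl(\Phi_{\lambda(t)}(x(t))-\Phi^{*}\bigr)+\frac{\varepsilon(t)t^{2}}{2}\bigl(\|x(t)\|^{2}-\|x^{*}\|^{2}\bigr)+\frac{1}{2}\bigl\|2(x(t)-x^{*})+t\bigl(\dot x(t)+\beta\nabla\Phi_{\lambda(t)}(x(t))\bigr)\bigr\|^{2}.
\]
As in Theorem~\ref{Str_conv} I would split into three cases according to whether the trajectory eventually stays outside the ball $B(0,\|x^{*}\|)$, eventually stays inside it, or visits both at arbitrarily large times. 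Cases~II and~III go through \emph{word for word} as in the proof of Theorem~\ref{Str_conv}, with Theorem~\ref{Str_conv_aux_th_0} replacing Theorem~\ref{Str_conv_aux_th}: the former gives $\|\prox\nolimits_{\lambda(t)\Phi}(x(t))-x(t)\|\to0$ and $\Phi(\prox\nolimits_{\lambda(t)\Phi}(x(t)))-\Phi^{*}\to0$, and the weak sequential cluster point argument (weak lower semicontinuity of $\Phi$ and of $\|\cdot\|$, uniqueness of the minimal norm minimizer) then upgrades weak to strong convergence of the full trajectory to $x^{*}$ in Case~II and of a suitable subsequence in Case~III, yielding $\liminf_{t\to+\infty}\|x(t)-x^{*}\|=0$.

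The substance is Case~I, $\|x(t)\|\ge\|x^{*}\|$ for all $t\ge t_{0}$. I would differentiate $E_{0,2}$ along \eqref{Syst} exactly as in the computation preceding \eqref{Energy_FF}: substitute $\ddot x(t)+\beta\frac{d}{dt}\nabla\Phi_{\lambda(t)}(x(t))$ from \eqref{Syst}, apply the gradient inequality for the strongly convex map $x\mapsto b\,\Phi_{\lambda(t)}(x)+\frac{\varepsilon(t)}{2}\|x\|^{2}$, and Young-estimate the bilinear terms $\langle\nabla\Phi_{\lambda(t)}(x(t)),x(t)-x^{*}\rangle$ and $\langle x(t),\nabla\Phi_{\lambda(t)}(x(t))\rangle$ (with $c=1$ and the $a\ge1$ of \eqref{A_9}). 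At $\alpha=3$, $p=0$, $q=2$ one has $q+1-\alpha+p=0$, $q+1-\alpha+\frac{p}{2}=0$ and $p+2-q=0$, so the terms $\langle\dot x(t),x(t)-x^{*}\rangle$, $\|\dot x(t)\|^{2}$ and the $(p+2-q)$-multiple of $\varepsilon(t)\|x(t)\|^{2}$ all disappear, while the coefficient of $\Phi_{\lambda(t)}(x(t))-\Phi^{*}$ collapses to the constant $-\beta\le0$ (in particular \eqref{A_6_0}, which would fail here, is not needed and \eqref{A_10} becomes the trivial $\beta\ge0$). It then remains to verify two sign conditions: the coefficient of $\|x(t)-x^{*}\|^{2}$ equals $-t\varepsilon(t)+\frac{2\beta}{t}$, nonpositive for $t$ large by \eqref{A_2_2}; and the coefficient of $\|\nabla\Phi_{\lambda(t)}(x(t))\|^{2}$, after $\frac{\beta t^{2}}{a}\le\beta t^{2}$, is nonpositive precisely by \eqref{A_3_3} (the $\alpha=3$ specialization of \eqref{A_3}). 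Using \eqref{A_9} together with $\|x(t)\|\ge\|x^{*}\|$ to dispose of the remaining $\varepsilon$-terms, one arrives, for some $t_{2}\ge t_{0}$, at
\[
\frac{d}{dt}E_{0,2}(t)\ \le\ \frac{a\beta t^{2}\varepsilon^{2}(t)}{4}\|x^{*}\|^{2}\qquad\text{for all }t\ge t_{2},
\]
the exact analogue of \eqref{Energy_FF}.

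For the lower bound I would argue as in step~(ii) of the proof of Theorem~\ref{Str_conv}: drop the nonnegative velocity term, use $tb-\beta\ge\frac{t}{2}$ (valid for $t\ge t_{0}$ since $b$ is constant with $b\ge\frac{1}{2}+\frac{\beta}{t_{0}}$), and invoke the gradient inequality for $\varphi_{\varepsilon(t),\lambda(t)}$ together with \eqref{T_0} to obtain the analogue of \eqref{LBE},
\[
E_{0,2}(t)\ \ge\ \frac{t^{2}\varepsilon(t)}{2}\Bigl(\|x(t)-x_{\varepsilon(t),\lambda(t)}\|^{2}+\|x_{\varepsilon(t),\lambda(t)}\|^{2}-\|x^{*}\|^{2}\Bigr).
\]
Integrating the derivative estimate on $[t_{2},t]$ and combining with this gives
\[
\|x(t)-x_{\varepsilon(t),\lambda(t)}\|^{2}\ \le\ \|x^{*}\|^{2}-\|x_{\varepsilon(t),\lambda(t)}\|^{2}+\frac{2E_{0,2}(t_{2})}{t^{2}\varepsilon(t)}+\frac{a\beta\|x^{*}\|^{2}}{2\,t^{2}\varepsilon(t)}\int_{t_{2}}^{t}s^{2}\varepsilon^{2}(s)\,ds,
\]
and now \eqref{A_2_2} forces $t^{2}\varepsilon(t)\to+\infty$ (automatic from \eqref{A_2} when $\alpha>3$, but an explicit hypothesis here), so the first fraction vanishes, \eqref{A_5_5} kills the integral term, and \eqref{T_1} gives $\|x^{*}\|^{2}-\|x_{\varepsilon(t),\lambda(t)}\|^{2}\to0$; hence $\|x(t)-x_{\varepsilon(t),\lambda(t)}\|\to0$ and, by \eqref{T_1} once more, $x(t)\to x^{*}$. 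I expect the only delicate point to be the coefficient bookkeeping in the derivative estimate at the critical exponent: one must check that the vanishing of the $\|\dot x(t)\|^{2}$ coefficient (strictly negative when $\alpha>3$, whence the extra integrability in Theorem~\ref{Th_0}) is harmless, since only nonpositivity — not strict negativity — of each coefficient is used here, and that the weakened hypotheses \eqref{A_2_2}, \eqref{A_3_3}, \eqref{A_5_5} exactly compensate for the loss of \eqref{A_6_0} and of the slack that $\alpha>3$ provided.
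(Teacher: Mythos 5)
Your proposal is correct and follows exactly the route the paper takes: the paper's own proof of this theorem is a one-line reference to running the argument of Theorem~\ref{Str_conv} with $\alpha=3$, $q=2$, $p=0$ and invoking Theorem~\ref{Str_conv_aux_th_0} in Cases~II and~III, which is precisely what you carry out. Your coefficient bookkeeping at the critical exponent (the vanishing of the $\|\dot x\|^2$ and $(p+2-q)$ terms, the collapse of the $\Phi_{\lambda(t)}$-coefficient to $-\beta$, and the identification of \eqref{A_3_3} as $\tfrac{1}{9}$ of \eqref{A_3} at $\alpha=3$) checks out.
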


\begin{proof}

The proof goes in line with the one of Theorem \ref{Str_conv} by taking $\alpha = 3$, $b(t) \equiv b > 0$, $q = 2$, $p = 0$ and referring to Theorem \ref{Str_conv_aux_th_0} instead of Theorem \ref{Str_conv_aux_th} in the second and third cases.

\end{proof}

\end{subsection}

\begin{section}{Analysis of the conditions}

Since all the conditions cannot be satisfied simultaneously, let us treat them separately, namely:
\begin{enumerate}
    
    \item In order to obtain the fast convergence rates of the function values we require that for all $t \geq t_0$:
    
    \begin{itemize}
    
        \item $\alpha \ > \ 3$;
        
        \item the existence of $a \geq 1$ such that $ 2 \dot \varepsilon(t) \ \leq \ - a \beta \varepsilon^2(t) $,

        \item $ b(t_0) \ \geq \ \frac{\beta}{t_0} \text{ and } b(t_0) > \frac{1}{a} $;
        
        \item $ \int_{t_0}^{+\infty} t \varepsilon(t) dt \ < \ +\infty $

        and 
        
        \item the existence of $0 < \delta < \alpha - 3$ such that $ (\alpha - 3) t b(t) - t^2 \dot b(t) + \beta (2 - \alpha) \ \geq \ \delta t b(t) $.
    
    \end{itemize}

    \item For the strong convergence of the trajectories we require the following for all $t \geq t_0$:
    
    \begin{itemize}
    
        \item $\alpha \ > \ 3$;
        
        \item $\lambda$ is bounded;

        \item $ \frac{\alpha - 3}{3} b(t) - t \dot b(t) + \frac{\alpha \beta}{3} \ \geq \ 0 $;
        
        \item $ (\alpha - 3) t b(t) - t^2 \dot b(t) + \beta (2 - \alpha) \ \geq \ 0 $;

        \item  the existence of $a \geq 1$ such that $ 2 \dot \varepsilon(t) \ \leq \ - a \beta \varepsilon^2(t) $, $ b(t_0) > \frac{1}{a} \text{ and } b(t_0) \geq \frac{1}{2} + \frac{\beta}{t_0} $;
    
        \item $ \int_{t_0}^{+\infty} \frac{\varepsilon(t)}{t b(t)} dt \ < \ +\infty $;
    
        \item $ 2 \alpha (\alpha - 3) - 9 t^2 \varepsilon(t) + 6 \alpha \beta \ \leq \ 0 $;
    
        \item $ 18 \beta t + 9 \beta \dot \lambda(t) - 9 t b(t) \left( \dot \lambda(t) + 2 \beta \right) + 3 (\alpha + 3) \beta^2 + \alpha^2 \beta  \ \leq \ 0 $;
    
        \item $ \lim_{t \to +\infty} \frac{\beta}{t^{\frac{\alpha}{3} + 1} \varepsilon(t)} \int_{t_0}^t s^{\frac{\alpha}{3} + 1} \varepsilon^2(s) ds \ = \ 0 $.

    \end{itemize}

\end{enumerate}

We will analyse these conditions in details for the polynomial choice of functions $b$ and $\varepsilon$, namely, $b(t) = b t^n$ and $\varepsilon(t) = \frac{\varepsilon}{t^d}$, where $b$ is positive, $n \geq 0$ and $\varepsilon, d > 0$.

\begin{subsection}{Setting for the fast convergence rates of the function values}

The set of the conditions becomes for all $t \geq t_0$

\begin{enumerate}
    
        \item $\alpha \ > \ 3$;
        
        \item  there exists $a \geq 1$ such that $ -\frac{2 d \varepsilon}{t^{d+1}}  \ \leq \ - \frac{a \beta \varepsilon^2}{t^{2d}} $,

        \item $ b(t_0) \ \geq \ \frac{\beta}{t_0} \text{ and } b(t_0) > \frac{1}{a} $;
        
        \item $ \int_{t_0}^{+\infty} \frac{\varepsilon}{t^{d-1}} dt \ < \ +\infty $

        and 
        
        \item there exists $0 < \delta < \alpha - 3$ such that $ (\alpha - 3) b t^{n+1} - b n t^{n+1} + \beta (2 - \alpha) \ \geq \ \delta b t^{n+1} $.
    
\end{enumerate}

After some simple algebraic computations one may discover that in order to satisfy all the conditions at the same time it is enough to assume 
\[
\alpha - 3 \ > \ n \ \geq \ 0 \text{ (condition $5$) }
\]
and
\[
d \ > \ 2 \text{ and } d \ \geq \ \frac{\beta \varepsilon}{2} \text{ (conditions $2$ and $4$) },
\]
since all the other inequalities could be fulfilled by taking the appropriate $t_0$, namely,
\[
t_0 \ \geq \ \max \left\{ \sqrt[n+1]{\frac{\beta}{b}}, \sqrt[n+1]{\frac{\beta (\alpha - 2)}{b (\alpha - 3 - n)}} \right\} \text{ and } t_0 \ > \ \frac{1}{\sqrt[n]{b}}.
\]

\end{subsection}

\begin{subsection}{Setting for the strong convergence of the trajectories}

The set of the conditions becomes for all $t \geq t_0$

\begin{enumerate}
    
        \item $\alpha \ > \ 3$;
        
        \item $\lambda$ is bounded;

        \item $ \frac{\alpha - 3}{3} b t^{n+1} - b n t^{n+1} + \frac{\alpha \beta}{3} \ \geq \ 0 $;
        
        \item $ (\alpha - 3) b t^{n+1} - b n t^{n+1} + \beta (2 - \alpha) \ \geq \ 0 $;

        \item there exists $a \geq 1$ such that $ -\frac{2 d \varepsilon}{t^{d+1}}  \ \leq \ - \frac{a \beta \varepsilon^2}{t^{2d}} $, $ b(t_0) > \frac{1}{a} \text{ and } b(t_0) \geq \frac{1}{2} + \frac{\beta}{t_0} $;
    
        \item $ \int_{t_0}^{+\infty} \frac{\varepsilon}{b t^{n+d+1}} dt \ < \ +\infty $;
    
        \item $ 2 \alpha (\alpha - 3) - \frac{9 \varepsilon}{t^{d-2}} + 6 \alpha \beta \ \leq \ 0 $;
    
        \item $ 18 \beta t + 9 \beta \dot \lambda(t) - 9 b t^{n+1} \left( \dot \lambda(t) + 2 \beta \right) + 3(\alpha + 3) \beta^2 + \alpha^2 \beta  \ \leq \ 0 $;
    
        \item $ \lim_{t \to +\infty} \frac{\beta}{\varepsilon t^{\frac{\alpha}{3} - d + 1}} \int_{t_0}^t \varepsilon^2 s^{\frac{\alpha}{3} - 2d + 1} ds \ = \ 0 $.

\end{enumerate}

Again, analysis of the set of conditions leads to the following conclusion: 

\begin{itemize}
        
        \item $\lambda$ is bounded \text{ (condition $2$) };
        
        \item $0 \ \leq \ n \ \leq \ \frac{\alpha - 3}{3}$ and $ \alpha \ > \ 3$ \text{ (condition $3$) }; 
        
        \item $\max \left\{1, \frac{\beta \varepsilon}{2} \right\} \ \leq \ d \ \leq \ 2$ \text{ (conditions $5$, $7$, $8$, $9$) }.
        
\end{itemize}

As before, $t_0$ should be chosen appropriately.


\end{subsection}

\begin{subsection}{The case $\alpha = 3$}

In this case the following has to be assumed: there exists $a \geq 1$ such that for all $t \geq t_0$

\begin{enumerate}
    
    \item $ \lambda(t) $ is bounded;
    
    \item $ 2 \dot \varepsilon(t) \ \leq \ - a \beta \varepsilon^2(t), \ b \ > \ \frac{1}{a}$ and $b \geq \frac{1}{2} + \frac{\beta}{t_0} $;
    
    \item $ \int_{t_0}^{+\infty} \frac{\varepsilon(t)}{t} dt \ < \ +\infty $;
    
    \item $ \lim_{t \to +\infty} t^2 \varepsilon(t) \ = \ +\infty $;
    
    \item $ 2 \beta t + \beta \dot \lambda(t) - b t \left( \dot \lambda(t) + 2 \beta \right) + 2 \beta^2 + \beta \ \leq \ 0 $;
    
    \item $ \lim_{t \to +\infty} \frac{\beta}{t^2 \varepsilon(t)} \int_{t_0}^t s^2 \varepsilon^2(s) ds \ = \ 0 $.

\end{enumerate}

Essentially, for the polynomial choice of parameters that means $b \ \geq \ 1$ and

\begin{itemize}
    
    \item $ \lambda(t) $ is bounded \text{ (condition $2$) };
    
    \item $ \max \left\{1, \frac{\beta \varepsilon}{2} \right\} \ \leq \ d \ < \ 2 $ \text{ (conditions $4$, $5$, $6$) },

\end{itemize}

so with the appropriate choice of $t_0$ the whole set of conditions is fulfilled.


\end{subsection}

\end{section}

\begin{section}{Numerical examples}

\begin{subsection}{The rates of convergence of the Moreau envelope values}

Consider the objective function $\Phi: \mathbb{R} \to \mathbb{R}$, $\Phi(x) = |x| + \frac{x^2}{2}$ and let us plot the values of its Moreau envelope as well as the gradient of its Moreau envelope for different polynomial functions $\lambda$, $\varepsilon$ and $b$ to illustrate the theoretical results with some numerical examples. We take $\lambda(t) = t^l$, $\varepsilon(t) = \frac{1}{t^d}$, $b(t) = t^n$ with $x(t_0) = x_0 = 10$, $\dot x(t_0) = 0$, $\alpha = 10$ and $t_0 = 1.4$.

First, let us take different time scaling parameter $b$ with $l = 0$ and $d = 3$ and see how it affects the behaviour of the system \eqref{Syst} (see figure 1):

\begin{figure}[H]
     \centering
     \begin{subfigure}[b]{0.49\textwidth}
         \centering
         \includegraphics[width=\textwidth]{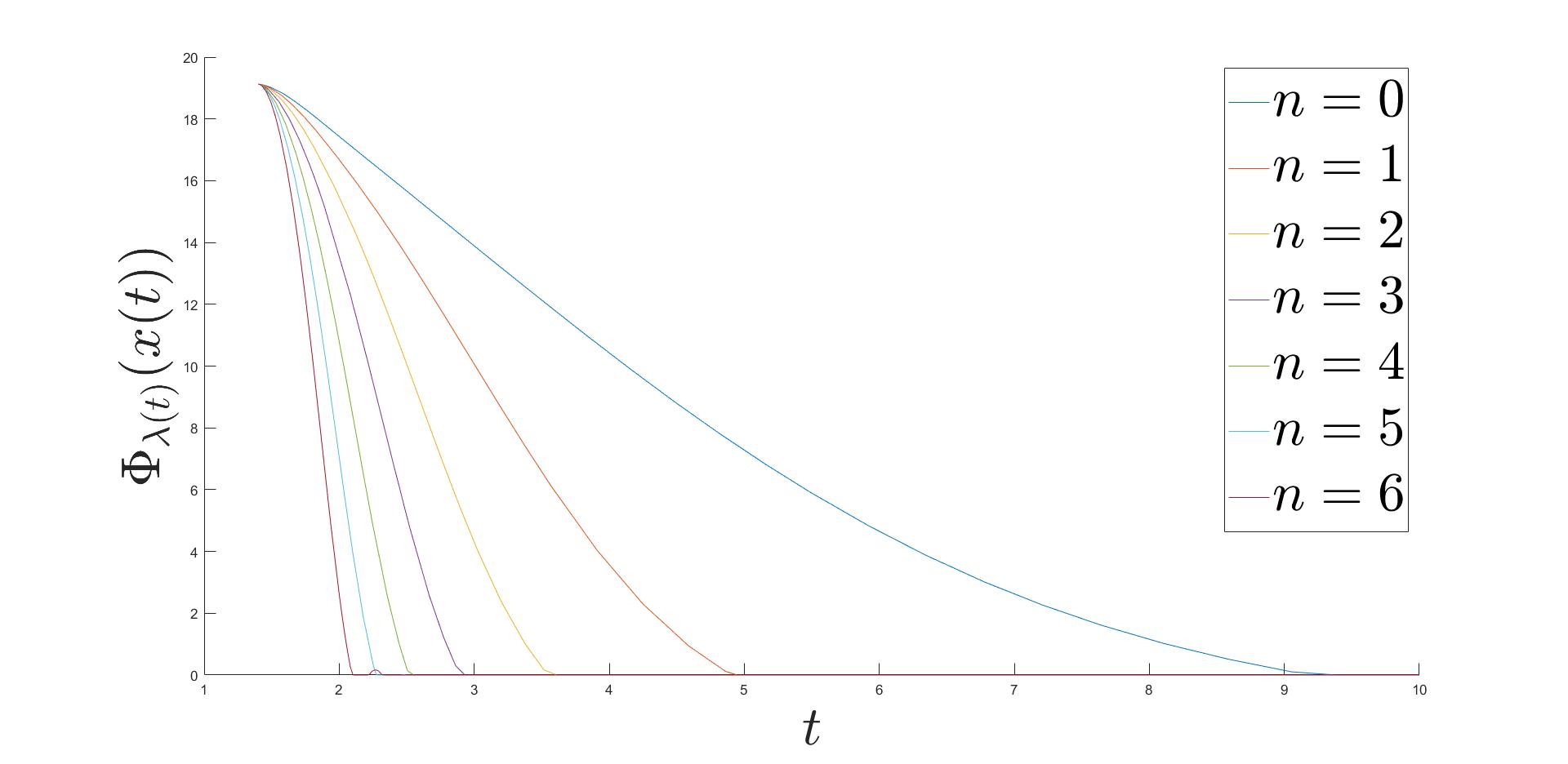}
         \caption{Moreau envelope values}
     \end{subfigure}
     \hfill
     \begin{subfigure}[b]{0.49\textwidth}
         \centering
         \includegraphics[width=\textwidth]{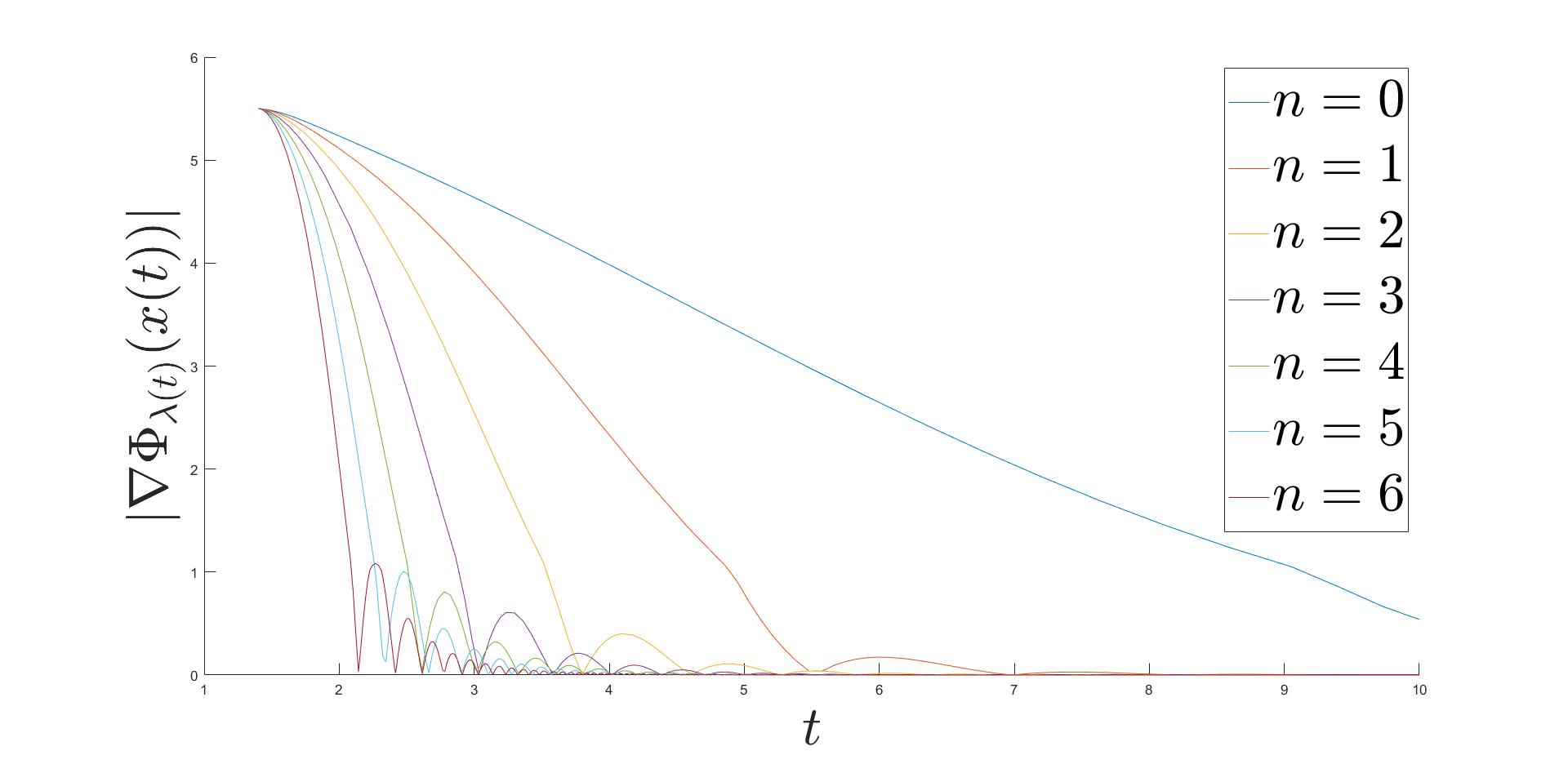}
         \caption{Gradient}
     \end{subfigure}
     \hfill
     \caption{$l = 0$ and $d = 3$.}
\end{figure}

As expected, the faster $b$ grows, the faster the convergence is.

Consider now different Moreau envelope parameter $\lambda$ with $d = 3$ and $n = 0$ (see figure 2):

\begin{figure}[H]
     \centering
     \begin{subfigure}[b]{0.49\textwidth}
         \centering
         \includegraphics[width=\textwidth]{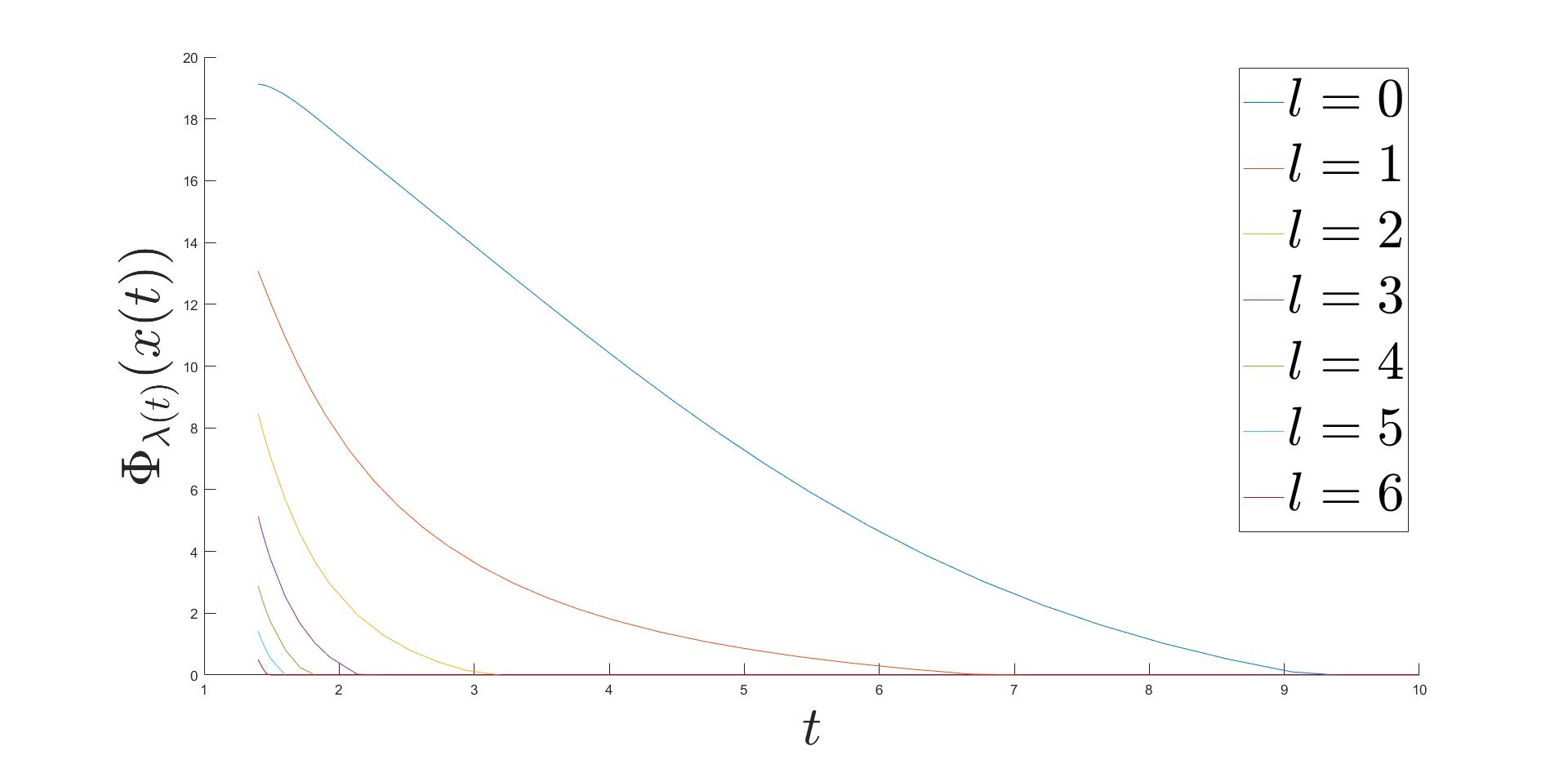}
         \caption{Moreau envelope values}
     \end{subfigure}
     \hfill
     \begin{subfigure}[b]{0.49\textwidth}
         \centering
         \includegraphics[width=\textwidth]{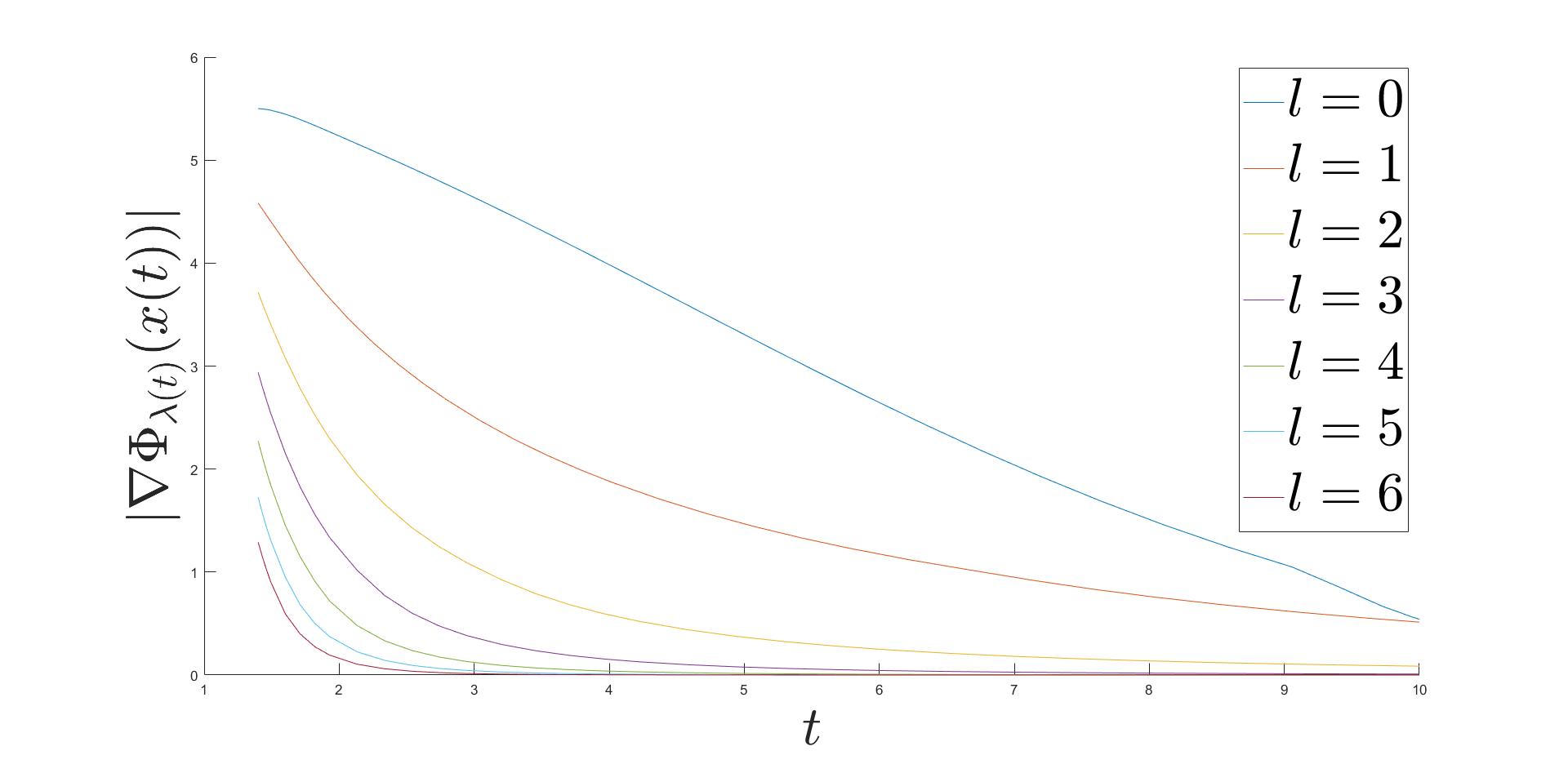}
         \caption{Gradient}
     \end{subfigure}
     \hfill
     \caption{$d = 3$ and $n = 0$.}
\end{figure}

Note that the difference in the starting point comes from the fact that $t_0 \neq 1$, and for different exponents $l$ the value $t_0^l$ is also different. As predicted by theory, a faster growing function $\lambda$ leads to faster convergence of not only the gradient of Moreau envelope of the objective function $\Phi$, but also of the values of the Moreau envelope themselves.

Varying the Tikhonov function $\varepsilon$ for $n = 0$ and $l = 0$ does not affect the system, which is illustrated by the following plot (see figure 3):

\begin{figure}[H]
     \centering
     \begin{subfigure}[b]{0.49\textwidth}
         \centering
         \includegraphics[width=\textwidth]{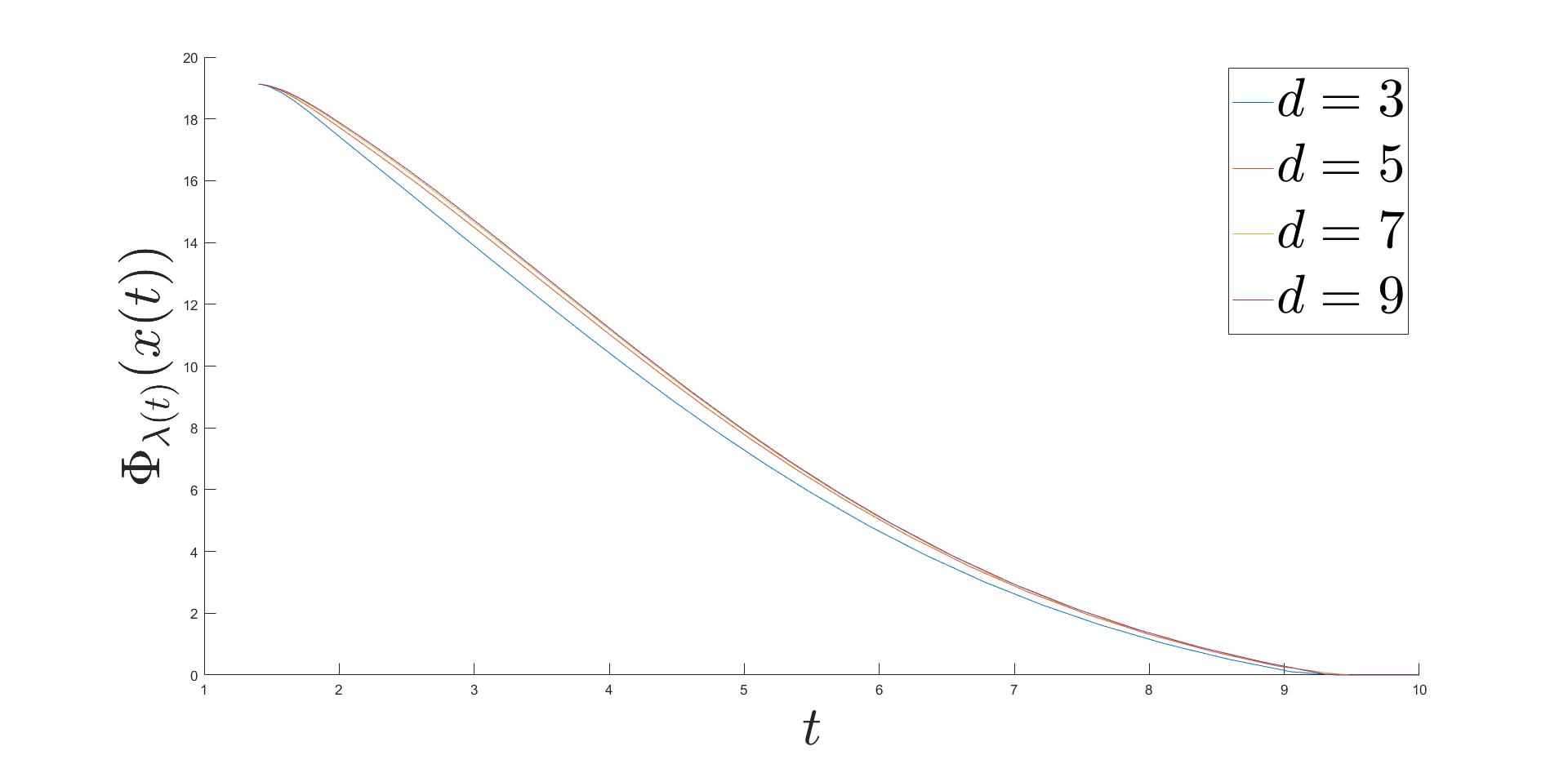}
         \caption{Moreau envelope values}
     \end{subfigure}
     \hfill
     \begin{subfigure}[b]{0.49\textwidth}
         \centering
         \includegraphics[width=\textwidth]{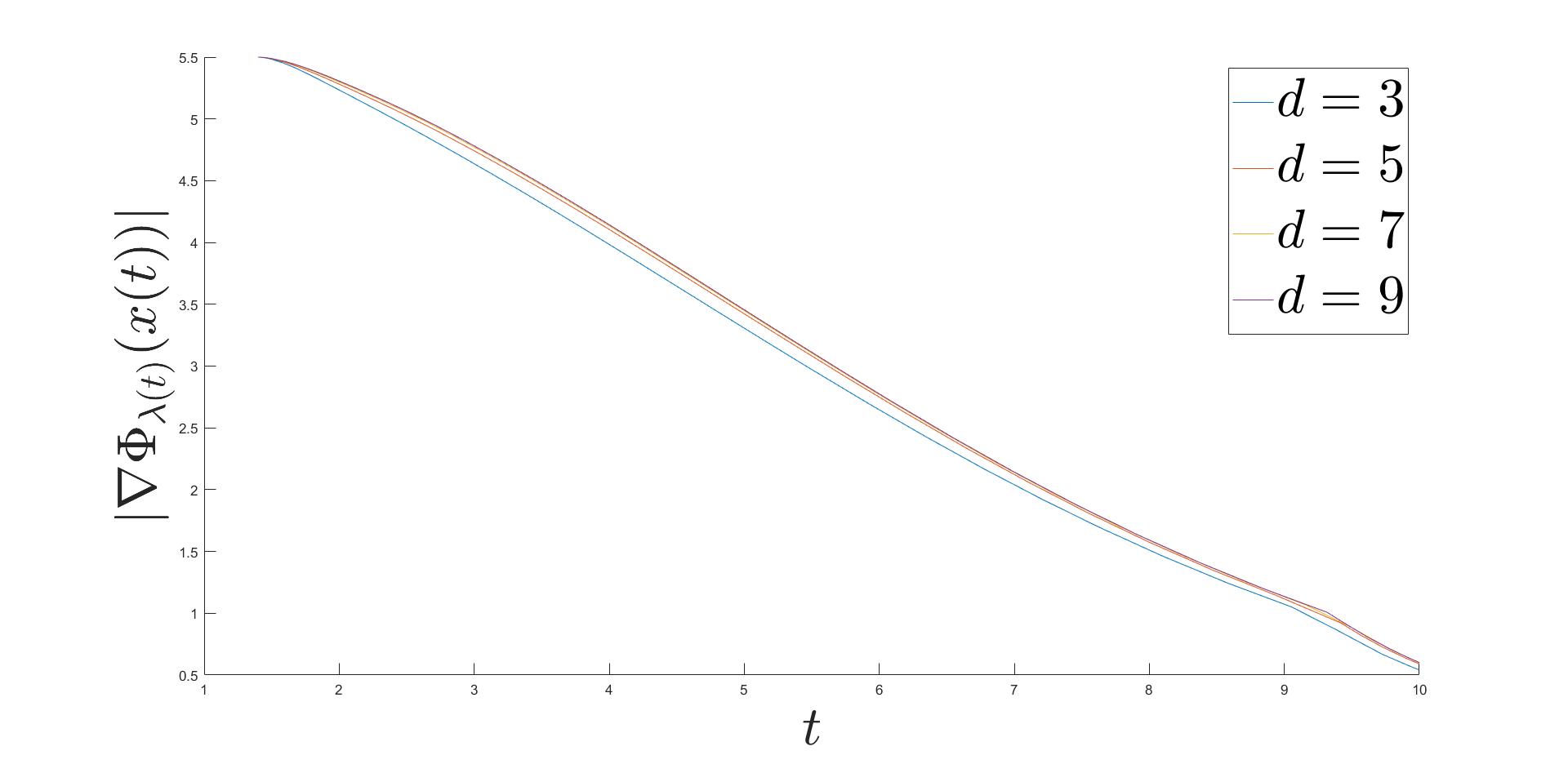}
         \caption{Gradient}
     \end{subfigure}
     \hfill
     \caption{$n = 0$ and $l = 0$.}
\end{figure}

\end{subsection}

\begin{subsection}{Strong convergence of the trajectories}

For a different objective function let us investigate the strong convergence of the trajectories of \eqref{Syst}:
\begin{equation*}
\Phi(x) \ = \ 
\begin{cases}
    &|x - 1|, \ x > 1 \\
    &0, \ x \in [-1, 1] \\
    &|x + 1|, \ x < -1.
\end{cases}
\end{equation*}
The set $\argmin \Phi$ is nothing but the segment $[-1, 1]$ and $0$ is its element of minimal norm. Let us fix $\alpha = 6$ and $n = 0.7$. First we take constant lambda ($\lambda(t) = 1$ for all $t \geq t_0$) and plot the behaviour of the trajectories of \eqref{Syst} with and without Tikhonov term (see figure 4).

\begin{figure}[H]
     \centering
     \begin{subfigure}[b]{0.49\textwidth}
         \centering
         \includegraphics[width=\textwidth]{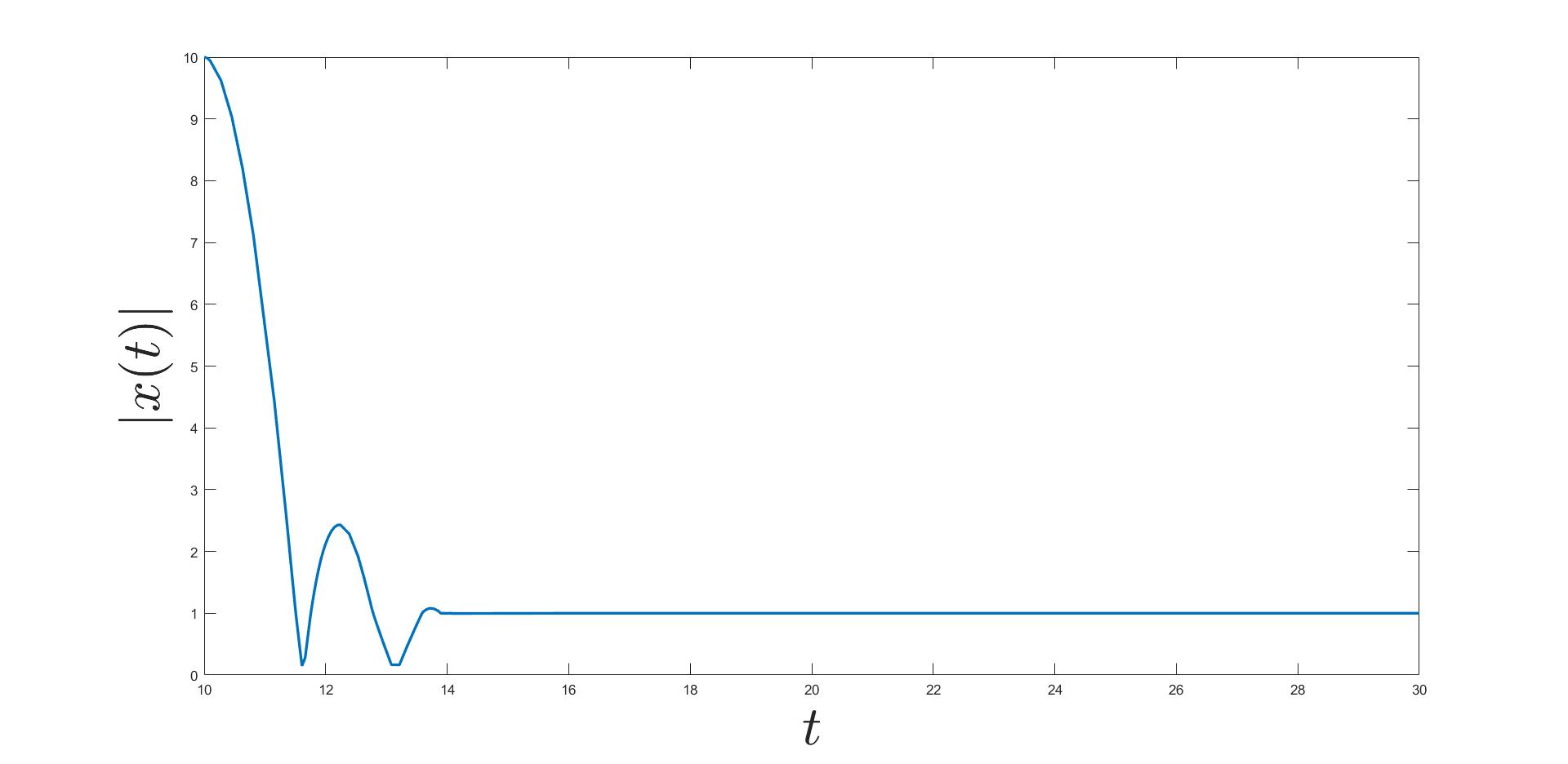}
         \caption{$\varepsilon(t) = 0$}
     \end{subfigure}
     \hfill
     \begin{subfigure}[b]{0.49\textwidth}
         \centering
         \includegraphics[width=\textwidth]{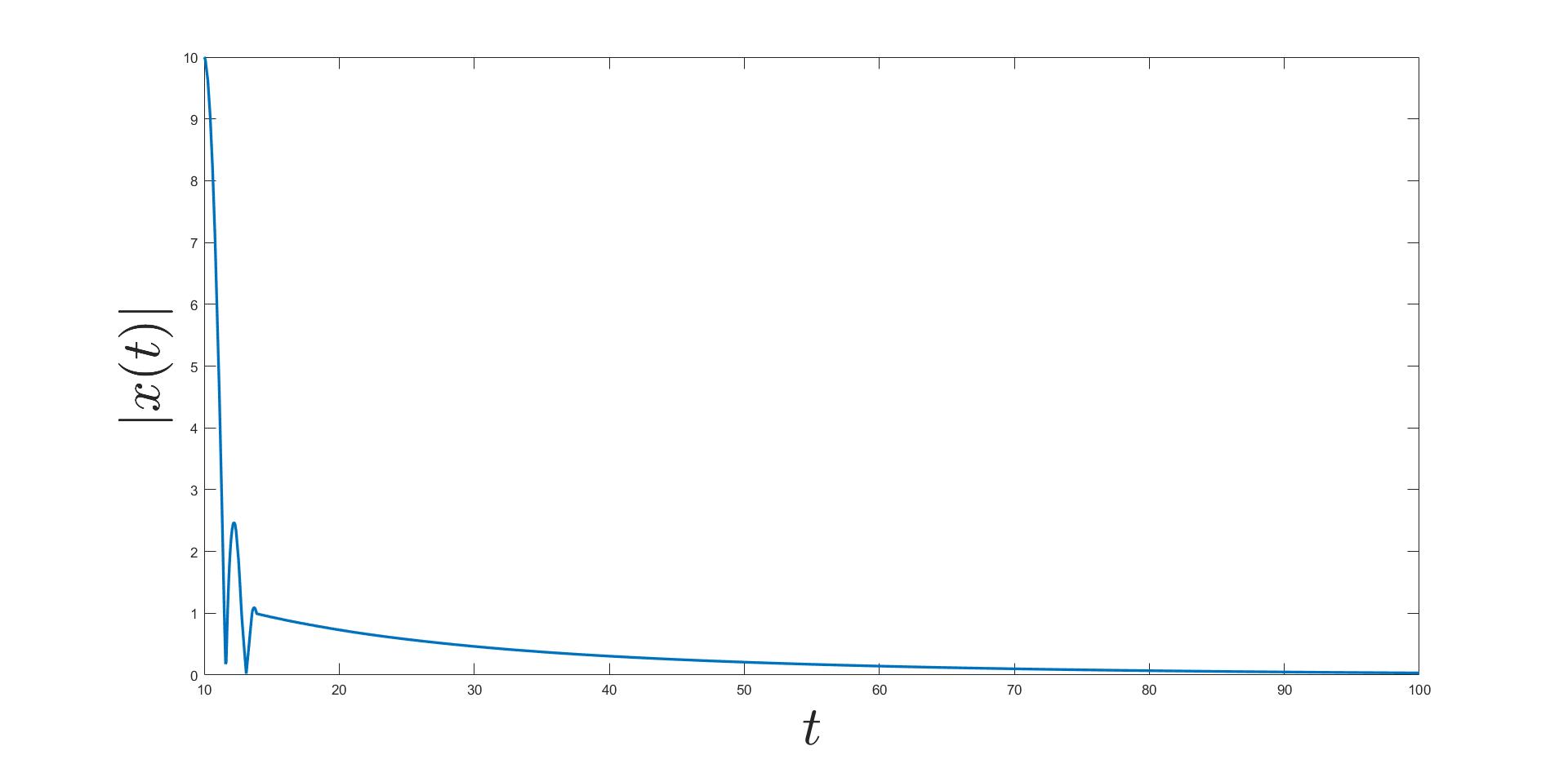}
         \caption{$\varepsilon(t) = \frac{1}{t^\frac{3}{2}}$}
     \end{subfigure}
     \hfill
     \caption{The role of the Tikhonov term.}
\end{figure}

As we see in case there is no Tikhonov regularization the trajectories converge to the minimizer $1$ of $\Phi$, but the Tikhonov term actually guarantees the convergence towards the minimal norm solution, which is $0$.

Another comparison was made for non-constant lambda: $\lambda(t) = 1 - \frac{1}{t^l}$ for $l = 1$ (for different $l$'s the picture is the same), illustrating similar behaviour (see figure 5).

\begin{figure}[H]
     \centering
     \begin{subfigure}[b]{0.49\textwidth}
         \centering
         \includegraphics[width=\textwidth]{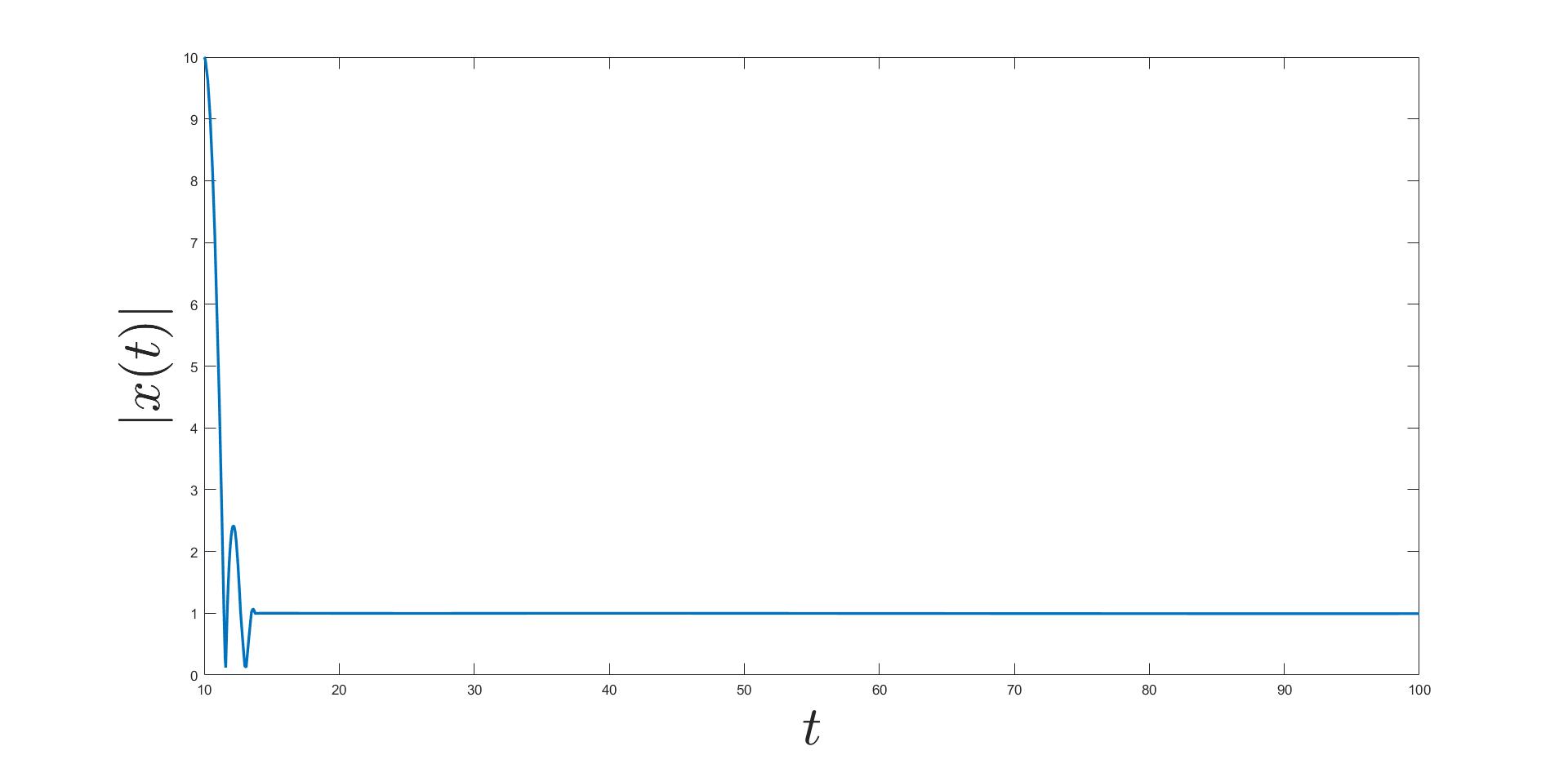}
         \caption{$\varepsilon(t) = 0$}
     \end{subfigure}
     \hfill
     \begin{subfigure}[b]{0.49\textwidth}
         \centering
         \includegraphics[width=\textwidth]{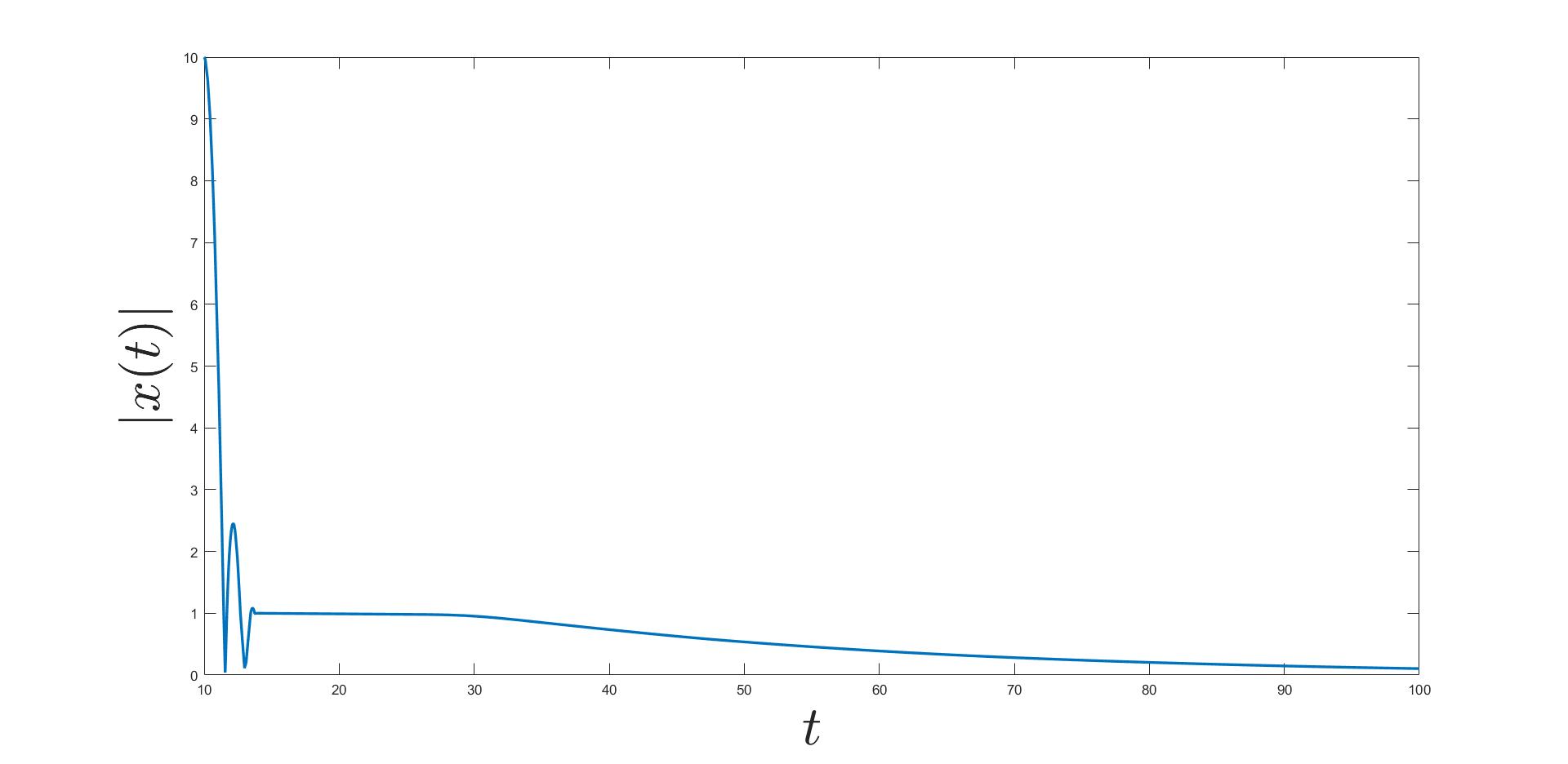}
         \caption{$\varepsilon(t) = \frac{1}{t^\frac{3}{2}}$}
     \end{subfigure}
     \hfill
     \caption{The role of the Tikhonov term.}
\end{figure}

Finally, for the same choice of $\lambda$ let us take different Tikhonov terms to figure out how changing them affects the trajectories of \eqref{Syst} (see figure 6):

\begin{figure}[H]
    \includegraphics[width=\textwidth]{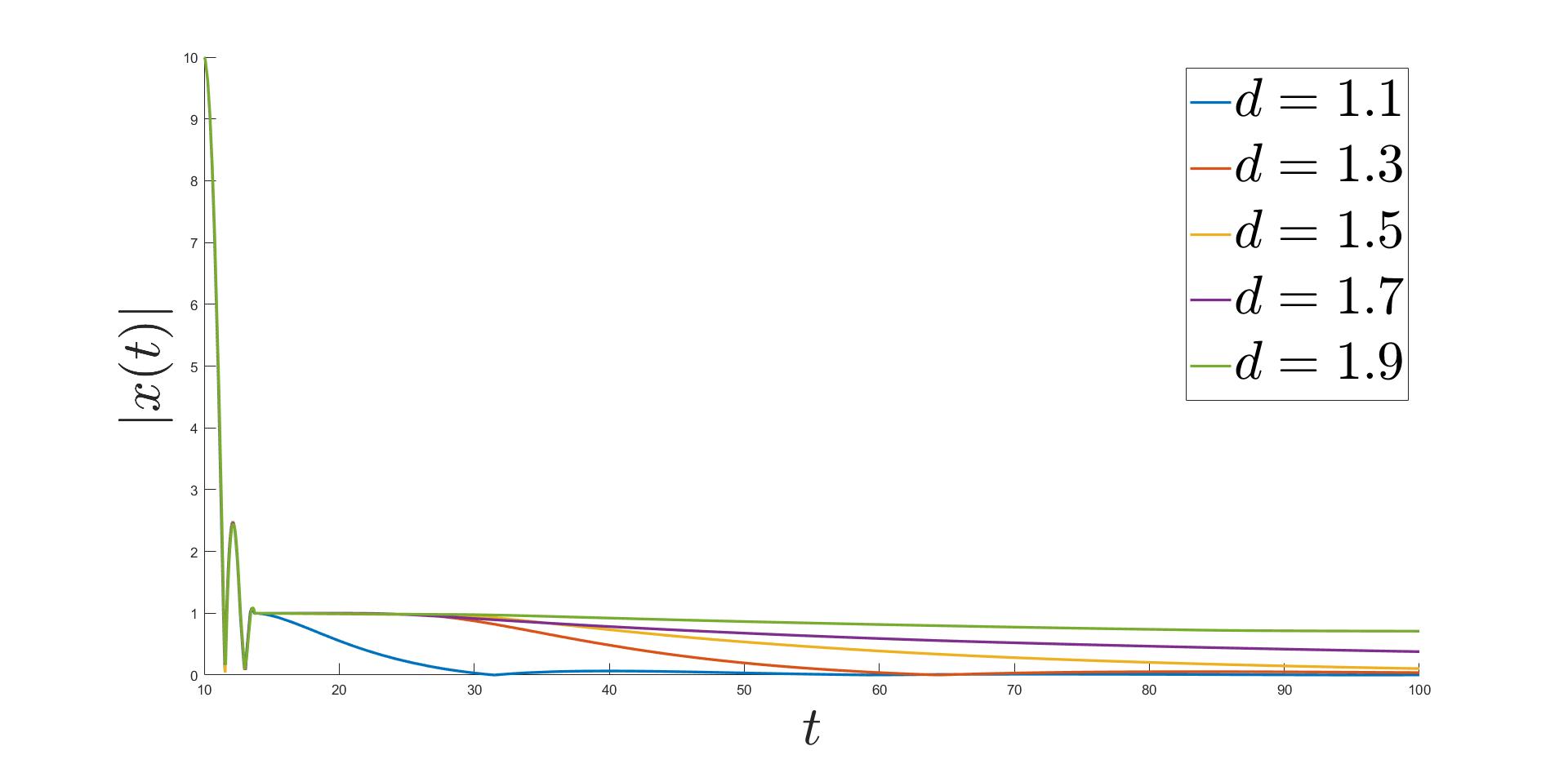}
    \caption{$n = 0.7$ and $\lambda(t) = 1 - \frac{1}{t}$.}
\end{figure}

We see, that the faster $\varepsilon$ decays, the slower trajectories converge.

\end{subsection}

\end{section}

\begin{section}{Appendix}

Let us state here some auxiliary lemmas which we used in our analysis. For the proof of the following lemma we refer to \cite{AAS}.

\begin{lemma}\label{L_1}
Suppose that $f: [t_0, +\infty) \to \mathbb{R}$ is locally absolutely continuous and bounded from below and there exists $g \in L^1([t_0, +\infty), \mathbb{R})$ such that for almost all $t \geq t_0$
\[
\frac{d}{dt} f(t) \ \leq \ g(t).
\]
Then there exists $\lim_{t \to +\infty} f(t) \in \mathbb{R}$.
\end{lemma}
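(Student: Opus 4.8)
The plan is to reduce the claim to the elementary fact that a monotone function which is bounded on one side converges. First I would introduce the auxiliary function
\[
h(t) \ := \ f(t) - \int_{t_0}^t g(s)\, ds, \qquad t \geq t_0.
\]
Since $f$ is locally absolutely continuous and the indefinite integral of the $L^1$ function $g$ is locally absolutely continuous with $\frac{d}{dt}\int_{t_0}^t g(s)\,ds = g(t)$ for almost every $t$, the function $h$ is locally absolutely continuous and satisfies $\dot h(t) = \dot f(t) - g(t) \leq 0$ for almost all $t \geq t_0$. Hence $h$ is non-increasing on $[t_0, +\infty)$.

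Next I would check that $h$ is bounded from below. Because $g \in L^1([t_0,+\infty),\mathbb{R})$, one has $\big| \int_{t_0}^t g(s)\,ds \big| \leq \|g\|_{L^1}$ for every $t \geq t_0$; combined with the assumed lower bound on $f$, this yields
\[
h(t) \ \geq \ \inf_{[t_0,+\infty)} f \ - \ \|g\|_{L^1} \ > \ -\infty \qquad \text{for all } t \geq t_0.
\]
A non-increasing function that is bounded from below has a finite limit, so $\ell := \lim_{t\to+\infty} h(t)$ exists in $\mathbb{R}$.

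Finally, since $g \in L^1$, the improper integral $I := \int_{t_0}^{+\infty} g(s)\,ds$ converges to a finite value, and therefore
\[
\lim_{t\to+\infty} f(t) \ = \ \lim_{t\to+\infty}\left( h(t) + \int_{t_0}^t g(s)\, ds \right) \ = \ \ell + I \ \in \ \mathbb{R},
\]
which is the assertion.

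I do not anticipate a genuine obstacle: every step is standard real analysis. The only points deserving a line of justification are that $h$ inherits local absolute continuity and that $\frac{d}{dt}\int_{t_0}^t g(s)\,ds = g(t)$ for a.e.\ $t$ — both immediate from the properties of Lebesgue indefinite integrals — after which the whole argument is simply ``subtract off the $L^1$ part to turn $f$ into a monotone, lower-bounded function.''
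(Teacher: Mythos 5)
Your proof is correct and complete; the paper itself does not reprove this lemma but refers to \cite{AAS}, and your argument (subtracting the locally absolutely continuous primitive of $g$ to obtain a non-increasing function bounded from below, then adding back the convergent improper integral) is precisely the standard one used there. No gaps: the only points needing care — that $h$ inherits local absolute continuity, that an a.e.\ nonpositive derivative of a locally absolutely continuous function implies monotonicity via $h(b)-h(a)=\int_a^b \dot h$, and that $\int_{t_0}^t g \to \int_{t_0}^{+\infty} g$ since $\int_t^{+\infty}|g|\to 0$ — are all addressed or immediate.
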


For the proof of the next lemma we refer to \cite{APR}.
\begin{lemma}\label{L_2}
Let $H$ be a real Hilbert space and $x : [t_0, +\infty) \mapsto \mathbb{H}$ be a continuously differentiable function satisfying $ x(t) + \frac{t}{\alpha} \dot x(t) \to \ L $ as $ t \to +\infty $, with $ \alpha > 0 $ and $ L \in \mathbb{H} $. Then $ x(t) \to L $ as $ t \to +\infty $.
\end{lemma}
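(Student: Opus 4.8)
The plan is to recognize the hypothesis as a first-order linear ODE for $x$ with a convergent right-hand side, solve it explicitly via an integrating factor, and then pass to the limit using a continuous Cesàro / L'Hôpital averaging argument. First I would set $g(t) := x(t) + \frac{t}{\alpha}\dot x(t)$, so that by assumption $g(t) \to L$ as $t \to +\infty$; note that $g$ is continuous since $x$ is continuously differentiable. Rewriting the defining relation as $t\dot x(t) + \alpha x(t) = \alpha g(t)$ and multiplying by the integrating factor $t^{\alpha - 1}$, I observe that the left-hand side becomes an exact derivative, namely $\frac{d}{dt}\bigl(t^{\alpha} x(t)\bigr) = \alpha t^{\alpha - 1} g(t)$.

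Next I would integrate this identity on $[t_0, t]$ and use the fundamental theorem of calculus to obtain the closed form
\[
x(t) = \frac{t_0^{\alpha}\, x(t_0)}{t^{\alpha}} + \frac{\alpha}{t^{\alpha}} \int_{t_0}^{t} s^{\alpha - 1} g(s)\, ds.
\]
Since $\alpha > 0$, the first term tends to $0$ as $t \to +\infty$, so everything reduces to proving that the weighted average $\frac{\alpha}{t^{\alpha}} \int_{t_0}^{t} s^{\alpha - 1} g(s)\, ds$ converges to $L$.

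For this I would use that $\frac{\alpha}{t^{\alpha}} \int_{t_0}^{t} s^{\alpha - 1}\, ds = 1 - (t_0/t)^{\alpha} \to 1$, which reduces the claim to showing $\frac{\alpha}{t^{\alpha}} \int_{t_0}^{t} s^{\alpha - 1}\bigl(g(s) - L\bigr)\, ds \to 0$. Fixing $\varepsilon > 0$ and choosing $T \geq t_0$ with $\|g(s) - L\| \leq \varepsilon$ for all $s \geq T$, I split the integral at $T$: the contribution of $[t_0, T]$ is a fixed vector divided by $t^{\alpha} \to +\infty$ and hence vanishes, while the contribution of $[T, t]$ is bounded in norm by $\varepsilon\,\frac{\alpha}{t^{\alpha}}\int_T^t s^{\alpha-1}\,ds \leq \varepsilon$. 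Letting $t \to +\infty$ and then $\varepsilon \to 0$ yields convergence of the average to $L$, and therefore $x(t) \to L$.

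The computation is entirely elementary; the only point requiring a little care is this averaging step, which is exactly a continuous Cesàro-mean argument (equivalently, an application of L'Hôpital's rule to the $\tfrac{\infty}{\infty}$ quotient $\int_{t_0}^t s^{\alpha-1} g(s)\,ds \big/ (t^{\alpha}/\alpha)$, whose derivative quotient is $g(t) \to L$). I would present the $\varepsilon$-splitting version, since it applies verbatim to the Hilbert-space-valued integrand without invoking a vector-valued form of L'Hôpital's rule, making the proof self-contained.
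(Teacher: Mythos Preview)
Your proof is correct. Note, however, that the paper does not actually prove this lemma: it simply states it and refers to \cite{APR} for the proof. The argument you give---rewriting the hypothesis as $\frac{d}{dt}\bigl(t^{\alpha}x(t)\bigr)=\alpha t^{\alpha-1}g(t)$ via an integrating factor, integrating, and then applying a continuous Ces\`aro-type averaging to conclude---is precisely the standard route and is essentially what one finds in the cited reference; your $\varepsilon$-splitting version is clean and avoids any appeal to a vector-valued L'H\^opital rule.
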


For the proof of the final Lemma we refer to \cite{ACR}.
\begin{lemma}\label{L_3}
Let $\delta > 0$ and $f \in L^1 \left( (\delta, +\infty), \mathbb{R} \right)$ be a non-negative and continuous function. Let $g: [\delta, +\infty) \to [0, +\infty)$ be a non-decreasing function such that $\lim_{t \to +\infty} g(t) \ = \ +\infty$. Then it holds
\[
\lim_{t \to +\infty} \frac{1}{g(t)} \int_\delta^t g(s) f(s) ds \ = \ 0.
\]
\end{lemma}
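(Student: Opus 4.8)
The plan is to run the classical Ces\`aro/Kronecker-type splitting argument, using the two hypotheses for complementary ranges of the integration variable: integrability of $f$ will control the contribution of large $s$, while the divergence of $g$ will annihilate the contribution of a fixed finite initial segment. Since $f \geq 0$ and $g \geq 0$, the quantity under consideration is non-negative, so it suffices to bound it from above by an arbitrarily small number for all sufficiently large $t$.

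First I would fix $\eta > 0$ and, using $f \in L^1\big((\delta, +\infty), \mathbb{R}\big)$ together with $f \geq 0$, choose $T \geq \delta$ with $\int_T^{+\infty} f(s)\, ds < \eta$. For $t \geq T$ I split
\[
\frac{1}{g(t)} \int_\delta^t g(s) f(s)\, ds \ = \ \frac{1}{g(t)} \int_\delta^T g(s) f(s)\, ds \ + \ \frac{1}{g(t)} \int_T^t g(s) f(s)\, ds .
\]
For the tail term, the key point is that $g$ is non-decreasing, so $g(s) \leq g(t)$ for every $s \in [T, t]$, and hence
\[
0 \ \leq \ \frac{1}{g(t)} \int_T^t g(s) f(s)\, ds \ \leq \ \int_T^t f(s)\, ds \ \leq \ \int_T^{+\infty} f(s)\, ds \ < \ \eta .
\]

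Next I would treat the initial-segment term. Observe that $C_T := \int_\delta^T g(s) f(s)\, ds$ is a finite constant, because $g$, being non-decreasing, is bounded by $g(T)$ on $[\delta, T]$, so that $C_T \leq g(T) \int_\delta^T f(s)\, ds < +\infty$. Since $\lim_{t \to +\infty} g(t) = +\infty$, there exists $t_1 \geq T$ such that $g(t) > 0$ and $C_T / g(t) < \eta$ for all $t \geq t_1$. Combining the two estimates yields $0 \leq \frac{1}{g(t)} \int_\delta^t g(s) f(s)\, ds < 2\eta$ for every $t \geq t_1$, and since $\eta > 0$ was arbitrary the limit equals $0$.

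I do not anticipate a genuine difficulty here; the only two points that need a line of care are that the initial-segment integral $C_T$ is finite — which follows from the local boundedness of the monotone function $g$ combined with $f \in L^1$ — and that the threshold $T$ must be chosen \emph{before} $t_1$, so that one and the same $T$ appears in both estimates. The continuity hypothesis on $f$ is not actually used in this argument; measurability and integrability would suffice, and the continuous case is precisely the one recorded in \cite{ACR}.
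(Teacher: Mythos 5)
Your proof is correct. The paper does not actually prove Lemma \ref{L_3} --- it only refers to \cite{ACR} --- and your argument is precisely the standard splitting argument one expects behind that citation: the tail of the integral is controlled by $g(s)\le g(t)$ together with $\int_T^{+\infty} f < \eta$, and the fixed initial segment is killed by $g(t)\to+\infty$; your remarks on the finiteness of $\int_\delta^T g f$ and on the dispensability of the continuity of $f$ are both accurate.
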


\end{section}

\section{Declarations}

\begin{center}
    \textbf{Acknowledgements}
\end{center}

The authors are grateful to two anonymous reviewers for their remarks on this manuscript and for meaningful suggestions, which improved the quality of this paper.

\begin{center}
    \textbf{Funding and/or Conflicts of interests/Competing interests}
\end{center}

The authors declare that they have no competing interests subject to the topic of this article.

\begin{center}
    \textbf{Data availability}
\end{center}

Data sharing not applicable to this article as no datasets were generated or analysed during the current study.

\end{document}